\pdfoutput=1
\documentclass[12pt,reqno]{amsart}
\usepackage{amssymb,latexsym}
\usepackage{enumerate}
\usepackage{amscd}
\usepackage{verbatim}
\usepackage[arrow,matrix,curve]{xy}

\usepackage[english]{babel}
\usepackage{etoolbox}
\usepackage{csquotes}
\usepackage[backend=biber,style=numeric]{biblatex}
\usepackage[user,xr]{zref}
\usepackage{xr-hyper}
\usepackage[colorlinks]{hyperref}
\newcommand{\mylabel}[1]{\label{#1}\zlabel{#1}}


\zexternaldocument[I-]{GenRelGenEq}
\zexternaldocument[II-]{NonsymmetricClosedCategories}

\setlength{\textwidth}{6.5in}
\setlength{\oddsidemargin}{0in}
\setlength{\evensidemargin}{0in}

\setcounter{secnumdepth}{1}


\theoremstyle{plain}
\newtheorem{theorem}{Theorem}
\newtheorem{corollary}[theorem]{Corollary}
\newtheorem{lemma}[theorem]{Lemma}
\newtheorem{proposition}[theorem]{Proposition}

\theoremstyle{definition}
\newtheorem{definition}[theorem]{Definition}
\newtheorem{example}[theorem]{Example}

\newtheorem{remark}[theorem]{Remark}
\newtheorem*{unotation}{Notation}
\newtheorem{notation}{Notation}

\numberwithin{equation}{section}
\numberwithin{theorem}{section}
\numberwithin{assumptions}{section}
\numberwithin{notation}{section}


\def\pair#1#2{\langle #1, #2\rangle}

\def\Mof#1{\mathcal M(A)}


\def\Set{{\text{\bf Set}}}

\def\Ab{{\text{\bf Ab}}}

\def\Pre{{\text{\bf Pre}}}

\def\Top{{\text{\bf Top}}}

\def\Unif{{\text{\bf Unif}}}

\def\HUnif{{\text{\bf HUnif}}}
\def\CUnif{{\text{\bf CUnif}}}
\def\CHUnif{{\text{\bf CHUnif}}}

\def\LPartial{{\text{\bf LPartial}}}
\newcommand{\Ind}{\text{\bf Ind}}

\newcommand{\Partial}{\text{\bf Partial}}

\newcommand{\SuperUnif}{\text{\bf SupUnif}}


\newcommand{\CatFil}{\text{\bf Fil}}



\newcommand{\Fg}{\operatorname{Fg}}
\newcommand{\Up}{\operatorname{Up}}

\newcommand{\Sub}{\operatorname{Sub}}

\newcommand{\Fil}{\operatorname{Fil}}

\newcommand{\dom}{\operatorname{dom}}
\newcommand{\core}{\operatorname{core}}

\newcommand{\nullset}{\{\}}

\newcommand{\dd}{\operatorname{dd}}
\newcommand{\rr}{\operatorname{r}}

\newcommand{\opsquare}{{\mathop\square}}


\def\congruence{on\-gru\-ence\discretionary{-}{}{-}}
\def\conM/{c\congruence mod\-u\-lar}
\def\ConM/{C\congruence mod\-u\-lar}
\def\conD/{c\congruence dis\-trib\-u\-tive}
\def\ConD/{C\congruence dis\-trib\-u\-tive}
\def\conP/{c\congruence per\-mut\-a\-ble}
\def\ConP/{C\congruence per\-mut\-a\-ble}
\def\conMity/{\conM/\-i\-ty}
\def\ConMity/{\ConM/\-i\-ty}
\def\conDity/{c\congruence dis\-trib\-u\-tiv\-i\-ty}
\def\ConDity/{C\congruence dis\-trib\-u\-tiv\-i\-ty}
\def\conPity/{c\congruence per\-mut\-a\-bil\-i\-ty}
\def\ConPity/{C\congruence per\-mut\-a\-bil\-i\-ty}
\def\usprv/{un\-der\-ly\-ing-set-pre\-ser\-ving}

\def\ie/{{i.e.}}
\def\Ie/{{I.e.}}
\def\eg/{{e.g.}}
\def\Eg/{{E.g.}}
\def\etc/{{etc.}}


\newdimen\mysubdimen
\newbox\mysubbox

\def\subwhat#1#2#3{{
\setbox\mysubbox=\hbox{#3}
\mysubdimen=\wd\mysubbox
\setbox\mysubbox=\hbox{$#1#2$}
\ifnum\mysubdimen>\wd\mysubbox
\vtop{
\hbox to\mysubdimen{\hfil\box\mysubbox\hfil}
\nointerlineskip
\hbox{#3}}
\else
\mysubdimen=\wd\mysubbox
\vtop{
\box\mysubbox
\nointerlineskip
\hbox to\mysubdimen{#3}}
\fi
}}


\addbibresource{../Bibliographies/mybib.bib}


\begin{document}

\title[III. The Closed Category of Filters]{Elements of Topological Algebra \\ \  \\ III. The Closed Category of Filters}
\author{William H. Rowan}
\address{PO Box 20161 \\
         Oakland, California 94620}
\email{william.rowan@ncis.org}

\keywords{filters, partial functions, germs}
\subjclass[2020]{Primary: 08A99}
\date{\today}

\begin{abstract}
We explore the structure of $\CatFil$, the category of filters and germs of admissible partial functions. In particular, we show that $\CatFil$ is a nonsymmetric closed category, as defined in \cite{reltclosed}.
\end{abstract}

\maketitle

\section*{Introduction}

In this paper, we study the \emph{category of filters}, defined almost exactly as defined in \cite{kr70}, the only difference being  that we admit as objects in the category of filters, $\CatFil$,  filters  that contain the empty set.  This necessitates that we define germs of functions using explicit partial functions. 

The point of the paper is not this minor change, but the definition we give of  nonsymmetric closed category structure (\cite{reltclosed} - see \cite[VII.1 and~VII.7]{macl2} for the canonical treatment of the symmetric case) on the category $\CatFil$. We feel the need for a self-contained definition and exploration of the properties of this category, to facilitate forthcoming
more
 detailed explorations of applications, such as \cite{reltunifonfil} and \cite{reltgeneq}, briefly mentioned in Section~\ref{S:Applications}.

\section{Filters}\mylabel{S:Filters}

In this section, we will begin to define $\CatFil$, the category of filters and germs, with a discussion of \emph{filters}.
Recall

\begin{definition}
A \emph{filter $\mathcal F$ on a set $S$}\index{filter!filter on a set} is a set of subsets of $S$ such that
\begin{enumerate}
\item $S\in\mathcal F$;
\item if $F\in\mathcal F$ and $F\subset F'\subseteq S$, then $F'\in\mathcal F$;
and
\item if $F$, $F'\in\mathcal F$ then $F\cap F'\in\mathcal F$.
\end{enumerate}
We will denote the set of filters on $S$ by \index{filter!$\Fil S$ for a set $S$}$\Fil S$.
Note that some definitions include another condition: that $\mathcal F$ be \emph{proper}, i.e., that $\mathcal F$ not contain the empty set.  However, when we assume this, we shall explicitly call the filter a \emph{proper filter}\index{filter!proper filter}.
\end{definition}

\begin{remark}
A filter $\mathcal F\in\Fil S$ uniquely determines $S$, as $S=\bigcup\mathcal F$.
\end{remark}

\subsection{Ordering the set of filters}
$\Fil S$ admits a partial ordering, which we (unlike some authors) take to be  \emph{reverse inclusion}:

\begin{proposition} Let $\mathcal F$, $\mathcal G\in\Fil S$. The following are equivalent (and, if they hold, we will say $\mathcal F\leq\mathcal G$):
\begin{enumerate}
\item $\mathcal G\subseteq\mathcal F$, and
\item For any $G\in\mathcal G$, there is an $F\in\mathcal F$ with $F\subseteq G$.
\end{enumerate}
\end{proposition}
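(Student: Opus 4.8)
The plan is to establish the two implications $(1)\Rightarrow(2)$ and $(2)\Rightarrow(1)$ separately, in each case appealing directly to the filter axioms of the preceding definition; the only axiom that does any real work is the upward-closure condition~(2).

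For $(1)\Rightarrow(2)$, I would observe that this direction is essentially a tautology. Assuming $\mathcal G\subseteq\mathcal F$, any $G\in\mathcal G$ already lies in $\mathcal F$, so one may simply take $F=G$, which trivially satisfies $F\subseteq G$. No property of filters beyond the given set inclusion is needed here.

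For $(2)\Rightarrow(1)$, I would fix an arbitrary $G\in\mathcal G$ and invoke the hypothesis to obtain some $F\in\mathcal F$ with $F\subseteq G\subseteq S$. The upward-closure axiom of the filter definition then forces $G\in\mathcal F$. Since $G\in\mathcal G$ was arbitrary, this yields $\mathcal G\subseteq\mathcal F$, which is~(1), and the two filters being compared play symmetric enough roles that no further bookkeeping is required.

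The one point requiring care is that the upward-closure axiom as stated uses the strict inclusion $F\subset F'$, so it does not literally apply when $F=G$. I expect this to be the only genuine, if minor, obstacle, and I would dispose of it by a short case split: if $F=G$ then $G=F\in\mathcal F$ outright, while if $F\subsetneq G$ then the axiom applies with $F'=G$. Everything else follows immediately from the definitions, so the proof should be quite short.
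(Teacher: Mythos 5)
Your proof is correct; the paper states this proposition without proof, treating it as routine, and your argument (take $F=G$ for one direction, apply upward closure for the other, with the small case split to accommodate the strict inclusion in the axiom as written) is exactly the standard verification that would be intended.
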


\subsection{Filter bases}
\begin{definition} We say that a set $\mathcal B$ of subsets of $S$ is a \index{filter base}\emph{filter base} or \index{base for a filter}\emph{base for a filter} if $F$, $F'\in\mathcal B$ imply there is a $\bar F\in\mathcal B$ such that $\bar F\subseteq F\cap F'$.
\end{definition}

\begin{definition} If $\mathcal B$ is a filter base (of subsets of $S$), then the set of subsets $\mathcal F=\{\,F\subseteq S\mid\exists B\in\mathcal B\text{ such that }B\subseteq F\,\}$ is a filter $\mathcal F$, which is the least (in the above ordering) such that $\mathcal B\subseteq\mathcal F$, and which we denote by $\Fg_S\mathcal B$ or simply $\Fg\mathcal B$. In this case, we also say that $\mathcal B$ is a \emph{base for $\mathcal F$}. If $\mathcal B$ is not a filter base, then the least filter $\mathcal F$ such that $\mathcal B\subseteq \mathcal F$, and which we still denote by $\Fg\mathcal B$, is $\Fg\mathcal B'$, where $\mathcal B'$ is the set of finite intersections of elements of $\mathcal B$ (and is a filter base). In either case, we say that $\mathcal F=\Fg\mathcal B$ is \emph{the filter generated by $\mathcal B$.}
\end{definition}

\subsection{Subfilters} If $\mathcal F\in\Fil S$, then we say that a filter $\mathcal F'\in\Fil S$ is a \index{subfilter}\emph{subfilter of $\mathcal F$} if 
$\mathcal F'\leq\mathcal F$.
We will denote by \index{filter!$\Fil F$ for a filter $\mathcal F$}$\Fil\mathcal F$ the set of subfilters $\mathcal F'\leq\mathcal F$, i.e., the interval sublattice $I_{\Fil S}[\bot,\mathcal F]$.

\subsection{\texorpdfstring{$\Fil S$}{Fil S} is coalgebraic}
Recall that a lattice is \emph{coalgebraic} if its dual is \emph{algebraic} \cite[Definition~\zref{I-D:Algebraic}]{reltgeneq}.

\begin{proposition} Let $\mathcal F$ be a filter.  Then $\Fil\mathcal F$ is a coalgebraic complete lattice, where
\begin{enumerate}
\item $\bigvee_i\mathcal F_i=\bigcap_i\mathcal F_i$ and
\item $\bigwedge_i\mathcal F_i=\Fg\{\bigcup_i\mathcal F_i\}$.
\end{enumerate}
\end{proposition}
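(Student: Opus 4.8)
The plan is to handle the two order-theoretic formulas first, since together they establish the complete-lattice structure, and then to obtain coalgebraicity by recognizing $\Fil\mathcal F$ as the order dual of an interval in a manifestly algebraic lattice.

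For the join, I would first check that an arbitrary intersection $\bigcap_i\mathcal F_i$ of filters lying in $\Fil\mathcal F$ is again such a filter: each of the three filter axioms is visibly preserved under intersection, and $\mathcal F\subseteq\mathcal F_i$ for every $i$ forces $\mathcal F\subseteq\bigcap_i\mathcal F_i$. Since $\leq$ is reverse inclusion, a filter $\mathcal G$ is an upper bound of the family exactly when $\mathcal G\subseteq\mathcal F_i$ for all $i$, i.e. $\mathcal G\subseteq\bigcap_i\mathcal F_i$; the largest set with this property that is itself a filter is $\bigcap_i\mathcal F_i$, which is therefore the least upper bound. Dually, $\mathcal G$ is a lower bound iff $\mathcal F_i\subseteq\mathcal G$ for all $i$, i.e. $\bigcup_i\mathcal F_i\subseteq\mathcal G$, and the smallest filter with this property is by definition $\Fg(\bigcup_i\mathcal F_i)$, giving the meet formula. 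As both operations are defined for arbitrary families, completeness is immediate; equivalently, these are exactly the meet and join of the closure system below, read in the reversed order.

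For coalgebraicity, the observation I would establish is that filter generation $\Fg$ is a \emph{finitary} closure operator on the subsets of $S$: by the definition of $\Fg$, a set $F$ lies in $\Fg\mathcal B$ iff some finite intersection $B_1\cap\dots\cap B_n$ of members of $\mathcal B$ is contained in $F$, so membership in $\Fg\mathcal B$ is always witnessed by a finite subset of $\mathcal B$. Hence the filters on $S$, ordered by inclusion, are precisely the closed sets of an algebraic closure operator and so form an algebraic lattice $L$, whose top element is the improper filter $\mathcal P(S)$ (allowed here because we do not require properness). I would then identify the order dual $(\Fil\mathcal F)^{\mathrm{op}}$, whose ordering is ordinary inclusion among filters containing $\mathcal F$, with the principal up-set $[\mathcal F,\mathcal P(S)]$ of $L$, an interval of the form $[a,1]$.

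The remaining point, which I regard as the main obstacle, is to confirm that an interval $[a,1]$ of an algebraic lattice is again algebraic. For this I would argue that if $k\le x$ with $k$ compact in $L$, then $a\vee k\in[a,1]$ satisfies $a\vee k\le x$ and is compact in $[a,1]$: from $a\vee k\le\bigvee X$ with $X\subseteq[a,1]$ we get $k\le\bigvee X$, hence $k\le\bigvee X_0$ for some finite $X_0\subseteq X$, whence $a\vee k\le a\vee\bigvee X_0=\bigvee X_0$ because every member of $X_0$ dominates $a$. Since $a\le x$, these compact elements join up to $\bigvee_k(a\vee k)=a\vee x=x$, so the compact elements of $[a,1]$ are join-dense and $[a,1]$ is algebraic. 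Therefore $(\Fil\mathcal F)^{\mathrm{op}}$ is algebraic, i.e. $\Fil\mathcal F$ is coalgebraic. I would either cite this interval fact as standard or include the short verification just sketched.
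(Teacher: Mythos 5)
Your proof is correct. There is nothing in the paper to compare it against: the proposition is stated there without proof, as a recollection of standard facts about the lattice of filters, so your argument supplies exactly the verification the paper omits. Both order-theoretic computations are right: under reverse inclusion the upper bounds of $\{\mathcal F_i\}$ in $\Fil\mathcal F$ are precisely the filters contained in $\bigcap_i\mathcal F_i$, which is itself a filter containing $\mathcal F$, so it is the join; dually the lower bounds are the filters containing $\bigcup_i\mathcal F_i$, the least of which is $\Fg(\bigcup_i\mathcal F_i)$ by the very definition of $\Fg$. Your route to coalgebraicity is also sound and is the natural one: $\Fg$ is a finitary closure operator on sets of subsets of $S$, since membership in $\Fg\mathcal B$ is always witnessed by a finite intersection of members of $\mathcal B$, so the inclusion-ordered lattice $L$ of all filters on $S$ is algebraic; $(\Fil\mathcal F)^{\mathrm{op}}$ is the interval $[\mathcal F,\mathcal P(S)]$ of $L$; and your verification that an upper interval $[a,1]$ of an algebraic lattice is again algebraic (the elements $a\vee k$, with $k$ compact in $L$ and $k\le x$, being compact in $[a,1]$ and joining to $x$) is the standard argument, correctly carried out. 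Your parenthetical remark that the top element $\mathcal P(S)$ is available as a filter only because this paper admits improper filters is exactly the point where the paper's convention matters: with properness required, the filters on $S$ would not even form a closure system. The only glossed points are empty-family conventions (for the empty family the stated meet formula yields $\{S\}$ rather than the top $\mathcal F$ of $\Fil\mathcal F$, and the finite subcover $X_0$ in your compactness argument should be taken nonempty), but these are boundary cases that the paper's own statement elides as well.
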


\subsection{\texorpdfstring{$f(\mathcal F)$ and $f^{-1}(\mathcal G)$}{PushPullFilterAlongFunction}}

\begin{definition}\mylabel{D:PushingFilters}
Let $S$ and $T$ be sets, and $f:S\to T$ a function. If $\mathcal F\in\Fil S$, then we define
\[f(\mathcal F)=\Fg\{\,f(F)\mid F\in\mathcal F\,\};\]
if $\mathcal G\in\Fil T$, then we define
\[f^{-1}(\mathcal G)=\Fg\{\,f^{-1}(G)\mid G\in\mathcal G\,\}.\]
\end{definition}

\begin{proposition} We have
\begin{enumerate}
\item If $f:S\to T$ is a function, $\mathcal F\in\Fil S$, and $\mathcal G\in\Fil T$, then 
\[f(\mathcal F)\leq\mathcal G\iff\mathcal F\leq f^{-1}(\mathcal G).\]
\item 
If in addition $g:T\to W$ is a function and $\mathcal H\in\Fil W$, then
\[g(f(\mathcal F))=(gf)(\mathcal F)\]
and
\[f^{-1}(g^{-1}(\mathcal H))=(gf)^{-1}(\mathcal H).\]
\end{enumerate}
\end{proposition}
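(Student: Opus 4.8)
The plan is to reduce the whole proposition to the elementary power-set adjunction $f(A)\subseteq B \iff A\subseteq f^{-1}(B)$ (for $A\subseteq S$, $B\subseteq T$), together with one bookkeeping observation about testing the order of Definition~\ref{D:PushingFilters} on generators. First I would record that each generating family used there is a filter base: for images this is $f(F_1\cap F_2)\subseteq f(F_1)\cap f(F_2)$, and for preimages the sharper $f^{-1}(G_1)\cap f^{-1}(G_2)=f^{-1}(G_1\cap G_2)$. Hence $E\in f(\mathcal F)$ exactly when $E$ contains some $f(F)$ with $F\in\mathcal F$; and, because $\mathcal F$ is upward closed and closed under finite intersection, $f^{-1}(\mathcal G)\subseteq\mathcal F$ exactly when every generator $f^{-1}(G)$, $G\in\mathcal G$, already lies in $\mathcal F$.

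For part~(1) I would unwind both sides through the reverse-inclusion order and its characterization. The inequality $f(\mathcal F)\leq\mathcal G$ says that every $G\in\mathcal G$ contains some $f(F)$ with $F\in\mathcal F$, while $\mathcal F\leq f^{-1}(\mathcal G)$ says that $f^{-1}(G)\in\mathcal F$ for every $G\in\mathcal G$. The power-set adjunction converts each into the other pointwise in $G$: if $f^{-1}(G)\in\mathcal F$ then $F:=f^{-1}(G)$ gives $f(F)\subseteq G$; conversely, if $F\in\mathcal F$ satisfies $f(F)\subseteq G$ then $F\subseteq f^{-1}(G)$, so upward closure forces $f^{-1}(G)\in\mathcal F$. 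This is precisely the assertion that $f(-)\dashv f^{-1}(-)$ as a Galois connection between $\Fil S$ and $\Fil T$.

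For part~(2) I would argue directly at the level of bases, using repeatedly that each element of a generated filter dominates a generator. For the pushforward, $(gf)(\mathcal F)$ is generated by $\{g(f(F)):F\in\mathcal F\}$ and $g(f(\mathcal F))$ by $\{g(E):E\in f(\mathcal F)\}$; since each $E\in f(\mathcal F)$ contains some $f(F)$ and $g$ is monotone, $g(E)\supseteq g(f(F))$, whereas each $g(f(F))$ is itself of the form $g(E)$. Thus the two families generate the same filter. The preimage identity is parallel, using $(gf)^{-1}(H)=f^{-1}(g^{-1}(H))$ on power sets and the fact that $\{g^{-1}(H):H\in\mathcal H\}$ is a base for $g^{-1}(\mathcal H)$. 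Alternatively, the second identity of~(2) follows from the first for free: by part~(1) all three pushforwards are left adjoints, left adjoints compose, and right adjoints between posets are unique, so the right adjoint $(gf)^{-1}(-)$ of $(gf)(-)=g(f(-))$ must equal the composite $f^{-1}(g^{-1}(-))$.

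I expect no real obstacle; the only point needing care is the image/preimage asymmetry. Preimage commutes with intersection and with composition on the nose, but image only satisfies the inclusion $f(F_1\cap F_2)\subseteq f(F_1)\cap f(F_2)$, so every image argument must be phrased through ``contains a generator'' rather than equality of generating sets, and the filter-base condition must be checked before the order is tested on generators.
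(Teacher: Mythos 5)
Your proof is correct, and it is essentially the argument the paper itself uses: the paper states this proposition without proof, but its proof of the partial-function generalization (Theorem~\ref{T:GaloisOne}) proceeds exactly as you do, testing the reverse-inclusion order on the generating families (via Remark~\ref{R:FBtoFB}, that monotone images/preimages of bases are bases) and invoking the power-set adjunction of Proposition~\ref{T:GaloisParFnSubsets}. Your version is the clean special case where $f$ is total, so the correction term $(S-\dd(f))\cup f^{-1}(G)$ appearing in the paper's computation disappears; the closing observation that the second identity in (2) also follows from uniqueness of right adjoints is a nice extra not in the paper.
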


\section{Partial
Functions; Restriction}\mylabel{S:Partial}

If $S$, $T$ are sets, a \emph{partial function} $f:S\to T$ is a
method or rule $f$ which somehow assigns an element $f(s)\in T$ to
$s$, for some, but not necessarily all, elements $s\in S$.

\begin{definition} We
denote the \emph{domain of definition} of $f$, the subset
of $s\in S$ such that $f(s)$ is defined, by $\dd(f)$. We
denote the \emph{range} of $f$, the set of elements of
the form $f(s)$ for some $s\in S$, by $\rr(f)$.
\end{definition}

\begin{definition}
If $f:S\to T$, $g:T\to W$ are partial functions, then the \emph{composite partial function} of $f$ and $g$, denoted $g\circ f$, is the partial function that assigns an element $s\in S$ to $g(f(s))$, if both $s\in\dd(f)$ and $f(s)\in\dd(g)$.
\end{definition}

\begin{definition}
If $f$ and
$g$ are partial functions from $S$ to $T$, then we say that
$f$ is a
\emph{restriction} of $g$ if
$\dd(f)\subseteq\dd(g)$ and
$f(s)=g(s)$ for $s\in\dd(f)$.
If $f$ is a partial function on $S$, and $D\subseteq S$, then we denote by $f|_D$ the \emph{restriction of $f$ to $D$}, i.e., the rule which assigns $f(s)$ to $s$ for $s\in D\cap\dd(f)$ and does not assign anything to elements not in $D\cap\dd(f)$.
\end{definition}

\subsection{\texorpdfstring{$f(D)$ and $f^{-1}(D)$ when $f$ is a Partial Function}{PushPullSubsetAlongPartial}}
If $f:S\to T$ is a partial function, and $D\subseteq S$, we define
\[f(D)=\{\,f(s)\mid s\in D\cap\dd(f)\,\}\]
and if $D'\subseteq T$,
\[f^{-1}(D')=\{\,s\in S\mid s\in\dd(f)\text{ and }f(s)\in D'\,\}.\]

\begin{lemma}\mylabel{T:DDLemma} Let $f:S\to T$, $g:T\to W$  be  partial functions. Then $\dd(g\circ f)=f^{-1}(\dd(g))$.
\end{lemma}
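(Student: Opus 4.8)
The plan is to prove this set equality not by a two-sided inclusion argument but by showing that both sides are carved out of $S$ by the very same membership predicate, so that a single chain of definitional equivalences closes the gap. The content of the lemma is entirely bookkeeping of the two definitions involved, so the work is to line them up.

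First I would unwind the definition of the composite partial function. By that definition, $g\circ f$ assigns the value $g(f(s))$ to $s$ exactly when $s\in\dd(f)$ \emph{and} $f(s)\in\dd(g)$; hence $s\in\dd(g\circ f)$ if and only if $s\in\dd(f)$ and $f(s)\in\dd(g)$. Next I would unwind the definition of the preimage of a subset under a partial function, applied to $D'=\dd(g)\subseteq T$. By that definition, $f^{-1}(\dd(g))=\{\,s\in S\mid s\in\dd(f)\text{ and }f(s)\in\dd(g)\,\}$, so $s\in f^{-1}(\dd(g))$ if and only if $s\in\dd(f)$ and $f(s)\in\dd(g)$. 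Comparing the two characterizations, the membership conditions are literally identical, which yields the claimed equality.

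I expect no genuine obstacle here; the only thing worth flagging is \emph{why} the identity holds without side conditions, and this is precisely the role of the clause $s\in\dd(f)$ built into the definition of $f^{-1}(D')$. That clause is what makes the equality valid even when $\dd(g)$ is not contained in $\rr(f)$, or at points where $f$ is undefined: had $f^{-1}$ been defined naively as $\{\,s\mid f(s)\in D'\,\}$, one would be forced to intersect with $\dd(f)$ by hand, and the point is that the chosen convention absorbs that intersection automatically, so the two sides match on the nose.
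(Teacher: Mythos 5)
Your proof is correct and takes essentially the same route as the paper's: both arguments simply unwind the definition of the composite partial function and the definition of $f^{-1}(D')$ applied to $D'=\dd(g)$, and observe that the two membership predicates coincide on the nose. Your closing remark about the clause $s\in\dd(f)$ being built into the definition of the preimage is a sound observation about why no side conditions are needed, but it adds no new mathematical content beyond what the paper's two-line chain of equivalences already contains.
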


\begin{proof} Referring to the definitions, we have
\begin{align*}s\in\dd(g\circ f)
&\iff s\in\dd(f)\text{ and } f(s)\in\dd(g)\\
&\iff s\in f^{-1}(\dd(g)).
\end{align*}
\end{proof}

\begin{lemma}
\mylabel{T:RestrX}
Let $f$ and $g$ be partial functions from $S$ to $T$.
If $f=g|_{\hat D}$ for some $\hat D\subseteq S$ then
\begin{enumerate}
\item
If $D\subseteq S$, then $f(D)\subseteq g(D)$, with $f(D)=g(D)$ when $D\subseteq\hat D$, and
\item if $D'\subseteq T$, then $f^{-1}(D')\subseteq g^{-1}(D')$.
\end{enumerate}
\end{lemma}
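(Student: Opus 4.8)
The plan is to reduce everything to a single observation about the restriction $f=g|_{\hat D}$: by the definition of restriction, $\dd(f)=\hat D\cap\dd(g)$, so in particular $\dd(f)\subseteq\dd(g)$, and moreover $f(s)=g(s)$ for every $s\in\dd(f)$. Both statements are simply a matter of unwinding the definitions of image, preimage, and restriction against this fact.

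For part (1) I would start from the definitional identity $f(D)=\{\,f(s)\mid s\in D\cap\dd(f)\,\}$. Since $f(s)=g(s)$ whenever $s\in\dd(f)$, this set equals $\{\,g(s)\mid s\in D\cap\dd(f)\,\}$. Because $\dd(f)\subseteq\dd(g)$, we have $D\cap\dd(f)\subseteq D\cap\dd(g)$, so the latter set is contained in $\{\,g(s)\mid s\in D\cap\dd(g)\,\}=g(D)$, which gives $f(D)\subseteq g(D)$. For the equality clause, I would note that when $D\subseteq\hat D$ one has $D\cap\dd(f)=D\cap\hat D\cap\dd(g)=D\cap\dd(g)$, and this promotes the inclusion just used to an equality, yielding $f(D)=g(D)$.

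For part (2) I would chase a single element: if $s\in f^{-1}(D')$, then by definition $s\in\dd(f)$ and $f(s)\in D'$. Then $s\in\dd(g)$ since $\dd(f)\subseteq\dd(g)$, and $g(s)=f(s)\in D'$ since $f$ agrees with $g$ on $\dd(f)$; hence $s\in g^{-1}(D')$, establishing $f^{-1}(D')\subseteq g^{-1}(D')$.

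The only point requiring any care is the bookkeeping around the domain of definition: the entire argument rests on correctly reading off both $\dd(f)\subseteq\dd(g)$ and the pointwise agreement $f(s)=g(s)$ on $\dd(f)$ from the meaning of $g|_{\hat D}$. There is no genuine obstacle beyond this; the reason one gets inclusions rather than equalities in general (and equality only under the hypothesis $D\subseteq\hat D$) is precisely that restriction can shrink the domain of definition but never alter values where it remains defined.
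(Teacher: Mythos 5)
Your proof is correct and takes essentially the same route as the paper's: both arguments simply unwind the definitions of image, preimage, and restriction, using that $\dd(f)=\hat D\cap\dd(g)$ and that $f$ agrees with $g$ there, with the equality case in (1) coming from $D\cap\hat D=D$ when $D\subseteq\hat D$. Your version is if anything slightly cleaner in isolating those two facts up front before the set computations.
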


\begin{proof}
(1): We have
\begin{align*}t\in f(D)
&\iff t\in g|_{\hat D}(D)\\
&\iff \exists s\in\hat D\cap D\cap\dd(g)\text{ such that }t=g(s)\\
&\implies  \exists s\in D\cap\dd(g)\text{ such that }t=g(s)\\
&\iff s\in g(D), 
\end{align*}
with equivalence if $D\subseteq\hat D$.

(2): We have
\begin{align*}
s\in f^{-1}(D')&\iff s\in\dd(f)\text{ and }f(s)\in D'\\
&\iff s\in\dd(g|_{\hat D})\text{ and }g|_{D'}(s)\in D'\\
&\iff s\in\hat D\cap\dd(g)\text{ and }g(s)\in D'\\
&\implies s\in\dd(g)\text{ and }g(s)\in D'\\
&\iff s\in g^{-1}(D').
\end{align*}
\end{proof}

\begin{proposition}\mylabel{T:GaloisParFnSubsets} We have
\begin{enumerate}
\item
If $f:S\to T$ is a partial function, $D\subseteq S$, and $D'\subseteq T$, then
\[f(D)\subseteq D'\iff D\subseteq (S-\dd(f))\cup f^{-1}(D');\]
\item  if in addition, there is another partial function $g:T\to W$, and $D''\subseteq W$, then
\[g(f(D))=(g\circ f)(D)\]
and
\[f^{-1}(g^{-1}(D''))=(g\circ f)^{-1}(D'').\]
\end{enumerate}
\end{proposition}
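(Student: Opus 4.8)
The plan is to establish each of the three identities by the same device used in the preceding lemmas: unwind both sides to membership statements in terms of $\dd$, the underlying total-function evaluation, and ordinary set-membership, and then check that the resulting conditions are literally equivalent. All three are elementwise facts about partial functions, so no filter theory is needed; the work is purely bookkeeping about domains of definition.

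For part~(1), I would fix $t$ and note that $t\in f(D)$ means precisely $t=f(s)$ for some $s\in D\cap\dd(f)$, i.e.\ the complementary statement is that every $s\in D\cap\dd(f)$ has $f(s)\in D'$. The right-hand side $D\subseteq(S-\dd(f))\cup f^{-1}(D')$ says that every $s\in D$ either fails to lie in $\dd(f)$ or lies in $f^{-1}(D')=\{s\mid s\in\dd(f)\text{ and }f(s)\in D'\}$. For $s\in D\cap\dd(f)$ the first disjunct is impossible, so the condition collapses to $f(s)\in D'$, which is exactly the containment $f(D)\subseteq D'$. I would present this as a short chain of iff's, being careful that the extra term $S-\dd(f)$ is exactly what absorbs the points of $D$ outside $\dd(f)$ on which $f(D)$ imposes no constraint; that is the only subtle point, and it is the reason the naive statement $f(D)\subseteq D'\iff D\subseteq f^{-1}(D')$ would fail.

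For part~(2), the domain identity $\dd(g\circ f)=f^{-1}(\dd(g))$ proved in Lemma~\ref{T:DDLemma} is the key ingredient. For the forward-image equation I would compute
\begin{align*}
w\in g(f(D))
&\iff \exists t\in f(D)\cap\dd(g)\text{ with }w=g(t)\\
&\iff \exists s\in D\cap\dd(f)\text{ with }f(s)\in\dd(g)\text{ and }w=g(f(s))\\
&\iff \exists s\in D\cap\dd(g\circ f)\text{ with }w=(g\circ f)(s)\\
&\iff w\in(g\circ f)(D),
\end{align*}
where the middle step merely splits the existential over $t$ through $t=f(s)$ and the last relabelling uses Lemma~\ref{T:DDLemma} together with the definition of the composite partial function. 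The preimage equation is even more direct: $s\in f^{-1}(g^{-1}(D''))$ iff $s\in\dd(f)$ and $f(s)\in g^{-1}(D'')$, iff $s\in\dd(f)$, $f(s)\in\dd(g)$, and $g(f(s))\in D''$, iff $s\in\dd(g\circ f)$ and $(g\circ f)(s)\in D''$, iff $s\in(g\circ f)^{-1}(D'')$, again invoking Lemma~\ref{T:DDLemma} for the domain condition.

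I do not anticipate a genuine obstacle here, since every step is an equivalence of first-order membership conditions; the one place demanding care is keeping the domain-of-definition qualifiers attached at each stage, because dropping $s\in\dd(f)$ or $f(s)\in\dd(g)$ prematurely would silently replace the partial-function images with their total-function analogues and break the equalities. I would therefore write out the forward-image chain with the $\dd$ conditions made explicit at every line, and let the preimage identity follow by the same pattern.
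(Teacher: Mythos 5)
Your proposal is correct and follows essentially the same route as the paper's own proof: both parts are handled by unwinding the membership conditions into first-order statements about $\dd(f)$ and evaluation, with part~(1) turning on the observation that $S-\dd(f)$ absorbs the points of $D$ where $f$ is undefined, and part~(2) reducing to the identity $\dd(g\circ f)=f^{-1}(\dd(g))$ of Lemma~\ref{T:DDLemma}. The only cosmetic difference is that the paper inlines that domain identity from the definition of the composite rather than citing the lemma, and your chains are in fact slightly more careful about carrying the qualifier $s\in D$ through every line.
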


\begin{proof} 

 (1) We have
 \begin{align*}
f(D)\subseteq D'
&\iff (s\in D\text{ and }s\in\dd(f)\implies f(s)\in D')\\
&\iff (s\notin D\text{ or }s\notin\dd(f)\text{ or }f(s)\in D')\\
&\iff (s\in D\implies(s\notin\dd(f)\text{ or }f(s)\in D'))\\
&\iff D\subseteq (S-\dd(f))\cup f^{-1}(D');\text{ and}
\end{align*}

(2): we have
\begin{align*}
w\in g(f(D))
&\iff \exists s\text{ such that }s\in\dd(f)\text{ and }f(s)\in\dd(g)\text{ and }w=g(f(s))\\
&\iff \exists s\text{ such that }s\in\dd(g\circ f)\text{ and }(g\circ f)(s)=w\\
&\iff w\in(g\circ f)(D);\text{ and }
\end{align*}

\begin{align*}
s\in f^{-1}(g^{-1}(D''))
&\iff s\in\dd(f)\text{ and }f(s)\in g^{-1}(D'')\\
&\iff s\in\dd(f)\text{ and }f(s)\in\dd(g)\text{ and }g(f(s))\in D''\\
&\iff s\in\dd(g\circ f)\text{ and }(g\circ f)(s)\in D''\\
&\iff s\in(g\circ f)^{-1}(D'').
\end{align*}
\end{proof}

\subsection{\texorpdfstring{$f(\mathcal F)$ and $f^{-1}(\mathcal G)$ when $f$ is a Partial Function}{PushPullFilterAlongPartial}}
If
$S$ and
$T$ are sets,
$f:S\to T$ is a partial function, and $\mathcal F$ is a filter of subsets of $S$,
then we define
\[f(\mathcal F)=\Fg\{\,f(F)\mid F\in\mathcal F\,\}.\]
On the other hand, given $f$ and a filter $\mathcal G$ of subsets of $T$, then we define
\[f^{-1}(\mathcal G)=\Fg\{\,f^{-1}(G)\mid G\in\mathcal G\,\}.\]

\begin{remark}\mylabel{R:FBtoFB}
Note that for a total function $f$ (i.e. if $\dd(f)=S$), this definition coincides with the definition (Definition~\ref{D:PushingFilters}) given previously.
Also, note that the mappings $D\mapsto f(D)$ and $D'\mapsto g^{-1}(D')$ are monotone, and, consequently, take filter bases to filter bases.
\end{remark}

\begin{theorem}\mylabel{T:GaloisOne} We have
\begin{enumerate}
\item If $f$ is a partial function from $S$ to $T$,
$\mathcal F$ is a filter of subsets of $S$ such that $\dd(f)\in\mathcal F$, and
$\mathcal G$ is a filter of subsets of $T$, then
\[f(\mathcal F)\leq \mathcal G \mathrel{\text{iff}}
\mathcal F\leq f^{-1}(\mathcal G);\]
\item and if in addition, we have a partial function $g:T\to W$, and $\mathcal H$ is a filter of subsets of $W$, then
\[g(f(\mathcal F))=(g\circ f)(\mathcal F)\]
and
\[f^{-1}(g^{-1}(\mathcal H))=(g\circ f)^{-1}(\mathcal H).\]
\end{enumerate}
\end{theorem}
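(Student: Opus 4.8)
The plan is to lift the subset-level results of Proposition~\ref{T:GaloisParFnSubsets} to the level of filters, using the characterization of the ordering on $\Fil S$ together with the behavior of $\Fg$ on bases.

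For part (1), I would first unwind both sides into statements ranging over a single $G\in\mathcal G$. Since $f^{-1}(\mathcal G)=\Fg\{\,f^{-1}(G)\mid G\in\mathcal G\,\}$ and $\mathcal F$ is a filter, the condition $\mathcal F\leq f^{-1}(\mathcal G)$, i.e.\ $f^{-1}(\mathcal G)\subseteq\mathcal F$, is equivalent to $f^{-1}(G)\in\mathcal F$ for every $G\in\mathcal G$; likewise, since $\{\,f(F)\mid F\in\mathcal F\,\}$ is a filter base for $f(\mathcal F)$, the condition $f(\mathcal F)\leq\mathcal G$ is equivalent to: for every $G\in\mathcal G$ there is an $F\in\mathcal F$ with $f(F)\subseteq G$. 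It therefore suffices to prove, for each fixed $G$, that there is an $F\in\mathcal F$ with $f(F)\subseteq G$ if and only if $f^{-1}(G)\in\mathcal F$. The backward implication is immediate: taking $F=f^{-1}(G)$ works, because $f(f^{-1}(G))\subseteq G$ (every element of $f^{-1}(G)$ already lies in $\dd(f)$). For the forward implication I would invoke the subset Galois connection of Proposition~\ref{T:GaloisParFnSubsets}(1): from $f(F)\subseteq G$ we get $F\subseteq(S-\dd(f))\cup f^{-1}(G)$, and intersecting with $\dd(f)$ yields $F\cap\dd(f)\subseteq f^{-1}(G)$.

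This is where the hypothesis $\dd(f)\in\mathcal F$ enters, and I expect it to be the only genuine subtlety. Because $\mathcal F$ is closed under finite intersection, $F\cap\dd(f)\in\mathcal F$, and then upward closure of $\mathcal F$ gives $f^{-1}(G)\in\mathcal F$. Without $\dd(f)\in\mathcal F$ the extra summand $(S-\dd(f))$ produced by partiality cannot be absorbed, and the equivalence breaks; this is precisely the point at which the partial-function setting departs from the total case, where the Galois connection needs no side condition.

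For part (2), I would first record the general principle that a monotone subset operation which sends filter bases to filter bases, as both $D\mapsto g(D)$ and $D\mapsto f^{-1}(D)$ do by Remark~\ref{R:FBtoFB}, computes the same generated filter from any base of its argument: if $\mathcal B$ is a base for a filter, then $\Fg\{\,h(F)\mid F\in\Fg\mathcal B\,\}=\Fg\{\,h(B)\mid B\in\mathcal B\,\}$, the nontrivial inclusion holding because every $F$ in the generated filter contains some $B\in\mathcal B$, whence $h(B)\subseteq h(F)$ by monotonicity and so $h(F)$ already lies in the right-hand filter. Applying this with $h=g$ and the base $\{\,f(F)\mid F\in\mathcal F\,\}$ of $f(\mathcal F)$ gives $g(f(\mathcal F))=\Fg\{\,g(f(F))\mid F\in\mathcal F\,\}$, and the subset identity $g(f(F))=(g\circ f)(F)$ from Proposition~\ref{T:GaloisParFnSubsets}(2) then yields the first equation. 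The preimage identity is entirely parallel, using $h=f^{-1}$, the base $\{\,g^{-1}(H)\mid H\in\mathcal H\,\}$ of $g^{-1}(\mathcal H)$, and $f^{-1}(g^{-1}(H))=(g\circ f)^{-1}(H)$. Thus part (2) is purely formal once the base principle is in hand, and it carries no analogue of the $\dd(f)$ hypothesis.
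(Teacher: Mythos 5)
Your proof is correct and follows essentially the same route as the paper: part (1) is obtained from the subset-level Galois connection of Proposition~\ref{T:GaloisParFnSubsets}(1) by intersecting with $\dd(f)$ and absorbing via $\dd(f)\in\mathcal F$, and part (2) by combining Proposition~\ref{T:GaloisParFnSubsets}(2) with the fact (Remark~\ref{R:FBtoFB}) that the monotone maps $D\mapsto g(D)$ and $D\mapsto f^{-1}(D)$ carry filter bases to filter bases. The only differences are cosmetic: you restate ``$\exists F\in\mathcal F$ with $F\subseteq f^{-1}(G)$'' as ``$f^{-1}(G)\in\mathcal F$'' and you spell out as an explicit lemma the base-transport principle that the paper leaves implicit.
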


\begin{proof} 
First, we note that by Remark~\ref{R:FBtoFB}, if we have $f:S\to T$ and $\mathcal F\in\Fil S$, then 
\begin{align*}
f(\mathcal F)&=\Fg\{\,f(F)\mid F\in\mathcal F\,\}\\
&=\Up\{\,f(F)\mid F\in\mathcal F\,\}
\end{align*}
where $\Up B$, for a subset $B$ of a lattice (in this case, the lattice of subsets of $S$), denotes the set of elements of the lattice greater than or equal to an element in $B$.
Similarly, if $\mathcal G\in\Fil T$, then
\begin{align*}
f^{-1}(\mathcal G)&=\Fg\{\,f^{-1}(G)\mid G\in\mathcal G\,\}\\
&=\Up\{\,f^{-1}(G)\mid G\in\mathcal G\,\}.
\end{align*}
Then, to prove the statements of the Theorem, we consider that

 (1):
by Proposition~\ref{T:GaloisParFnSubsets}, and since $\dd(f)\in\mathcal F$,
\begin{align*}
f(\mathcal F)\leq\mathcal G&\iff\Fg\{\,f(F)\mid F\in\mathcal F\,\}\leq\mathcal G\\
&\iff\forall G\in\mathcal G,\exists F\in\mathcal F\text{ such that }f(F)\subseteq G\\
&\iff\forall G\in\mathcal G,\exists F\in\mathcal F\text{ such that }F\subseteq (S-\dd(f))\cup f^{-1}(G)\\
&\iff\forall G\in\mathcal G,\exists F\in\mathcal F\text{ such that }F\cap\dd(f)\subseteq  f^{-1}(G)\\
&\iff\forall G\in\mathcal G,\exists F\in\mathcal F\text{ such that }F\subseteq  f^{-1}(G)\\
&\iff\mathcal F\leq \Fg\{\,f^{-1}(G)\mid G\in\mathcal G\,\}\\
&\iff\mathcal F\leq f^{-1}(\mathcal G);
\end{align*}

(2): we also have
\begin{align*}
H\in g(f(\mathcal F))
&\iff H\in\Fg\{\,g(G)\mid G\in\Fg\{\,f(F)\mid F\in\mathcal F\,\}\,\}\\
&\iff H\in\Fg\{\,g(f(F))\mid F\in\mathcal F\,\}\\
&\iff H\in(g\circ f)(\mathcal F), \text{ and}
\end{align*}
\begin{align*}F\in f^{-1}(g^{-1}(\mathcal H))
&\iff F\in \Fg\{\,f^{-1}(G)\mid G\in\Fg\{\,g^{-1}(H)\mid H\in\mathcal H\,\}\,\}\\
&\iff F\in\Fg\{\,f^{-1}(g^{-1}(H))\mid H\in\mathcal H\,\}\\
&\iff F\in(g\circ f)^{-1}(\mathcal H).
\end{align*}
\end{proof}

\section{\texorpdfstring{The Category $\LPartial$}{LPartial}}\mylabel{S:LPartial}

\subsection{Admissible domains of definition}
 We
would like to consider as equivalent, functions (or partial functions) on a set $S$
which have a common restriction, and to work with the
resulting equivalence classes of partial functions that
we will call
\emph{germs} in Section~\ref{S:Germs}. In this plain form, the equivalence relation is uninteresting, because any two partial
functions  on $S$ have a common restriction to the empty set.
For this reason, we will limit this relation of having a common restriction to considering two partial functions on $S$ as equivalent, if and only if they have a common restriction to a subset of $S$  that belongs to a specified set of admissible
domains of definition. In order that this result in an equivalence relation, we will require the specified set of admissible domains of definition be a filter of subsets of $S$. We call partial
functions from $S$ to $T$, defined on some set in the filter $\mathcal F\in\Fil S$,
\emph{admissible partial functions from $\mathcal F$ to $T$}. We already saw, in Theorem~\ref{T:GaloisOne}(1), a use of the condition that a partial function be admissible, although we didn't yet call it that. 

\begin{notation} \mylabel{N:Partial}
If $\mathcal F$ is a filter on some set $S$, $\mathcal G$ is a filter on some set $T$, and $G\in\mathcal G$, we denote by $\Partial(\mathcal F,\mathcal G)$ ($\LPartial(\mathcal F,\mathcal G)$), the set of admissible partial functions from $\mathcal F$ to $\mathcal G$, (respectively the set of \emph{local} admissible partial functions from $\mathcal F$ to $\mathcal G$) and by $\Partial(\mathcal F,\mathcal G,G)$ ($\LPartial(\mathcal F,\mathcal G,G)$) the set of admissible partial functions from $S$ to $T$ (respectively the set of \emph{local} admissible partial functions from $\mathcal F$ to $\mathcal G$), such that for some $F\in\mathcal F$, $f(F)\subseteq G$.
\end{notation}

\begin{remark} These notations will be useful not only for defining (in this section and the next) the categories $\LPartial$ and $\CatFil$, but also for defining a nonsymmetric closed structure on $\CatFil$ in Sections~\ref{S:FilMonoidal} and~\ref{S:FilClosed}.
Note that elements of $\LPartial(\mathcal F,\mathcal G,G)$ are not necessarily arrows in any category, although they play a role in the definitions of categories in the later sections we mentioned.
\end{remark}

\subsection{Locality} In order for admissible partial functions, and germs of admissible partial
functions, to be the arrows of categories, we
will need to impose another condition,
\emph{locality}, that the partial functions must
satisfy. 
The problem is that if $f$ and $g$ are partial functions,
$\dd(g\circ f)$ may be so small that $g\circ f$ is not admissible. Thus, suppose $f$ is an admissible partial function from $\mathcal F$ to $T$, where $\mathcal F\in\Fil S$, and $g$ is an admissible partial function from $\mathcal G$ to $W$, where $\mathcal G\in\Fil T$.
As
$\dd(g\circ f)=f^{-1}(\dd(g))$ and $\dd(g)$ could be any element
of
$\mathcal G$, what we want to require of $f$ is that for
any $G\in\mathcal G$,
$f^{-1}(G)\in\mathcal F$.  For, if that is true, then because $\mathcal F$ is a filter, $g\circ f$ will defined on $\dd(f)\cap f^{-1}(G)\in\mathcal F$ and will be admissible.

\begin{definition}
We say that
$f$ is
\emph{local} (with respect to $\mathcal G$) if for each
$G\in\mathcal G$, $f^{-1}(G)\in\mathcal F$, or in
other words, there is an
$F\in\mathcal F$ such that
$F\subseteq\dd(f)$ and $f(F)\subseteq G$.
\end{definition}

\begin{proposition}\mylabel{T:AdmissibleLocal} 
If $f:S\to T$ is a partial function, admissible with respect to $\mathcal F$ and local with respect to $\mathcal G$, and $g:T\to W$ is a partial function, admissible with respect to $\mathcal G$ and local with respect to $\mathcal H$, then $g\circ f:S\to W$ is a partial function, admissible with respect to $\mathcal F$ and local with respect to $\mathcal H$.
\end{proposition}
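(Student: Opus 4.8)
The plan is to verify two independent claims about the composite $g\circ f$: that it is \emph{admissible} with respect to $\mathcal F$, and that it is \emph{local} with respect to $\mathcal H$. Since these are logically separate, I would treat them in turn, drawing on the preparatory lemmas already established.

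First I would dispose of admissibility. By definition, $g\circ f$ is admissible with respect to $\mathcal F$ provided $\dd(g\circ f)\in\mathcal F$. By Lemma~\ref{T:DDLemma}, $\dd(g\circ f)=f^{-1}(\dd(g))$. Now $g$ is admissible with respect to $\mathcal G$, so $\dd(g)\in\mathcal G$; and $f$ is local with respect to $\mathcal G$, which by the locality definition means precisely that $f^{-1}(G)\in\mathcal F$ for every $G\in\mathcal G$. Applying this with $G=\dd(g)$ gives $f^{-1}(\dd(g))\in\mathcal F$, hence $\dd(g\circ f)\in\mathcal F$, as required. This is the clean part; it is essentially the motivating calculation sketched in the paragraph preceding the Definition of locality.

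Next I would establish locality of $g\circ f$ with respect to $\mathcal H$. I must show that for each $H\in\mathcal H$, $(g\circ f)^{-1}(H)\in\mathcal F$. The natural route is the composition identity $(g\circ f)^{-1}(H)=f^{-1}(g^{-1}(H))$ from Proposition~\ref{T:GaloisParFnSubsets}(2). Fix $H\in\mathcal H$. Since $g$ is local with respect to $\mathcal H$, we have $g^{-1}(H)\in\mathcal G$. Then, since $f$ is local with respect to $\mathcal G$, applying locality of $f$ to the element $g^{-1}(H)\in\mathcal G$ yields $f^{-1}(g^{-1}(H))\in\mathcal F$. Rewriting via the composition identity gives $(g\circ f)^{-1}(H)\in\mathcal F$, which is exactly locality of $g\circ f$ with respect to $\mathcal H$.

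The argument is almost entirely a bookkeeping exercise in chaining the two hypotheses through the identities $\dd(g\circ f)=f^{-1}(\dd(g))$ and $(g\circ f)^{-1}=f^{-1}\circ g^{-1}$; the only point demanding care is keeping straight which filter each condition refers to, so that I feed $\dd(g)$ and $g^{-1}(H)$—both members of $\mathcal G$—into the locality of $f$ at the right moments. I do not expect any genuine obstacle here, since admissibility of $g$ supplies the element of $\mathcal G$ needed for the first part, while locality of $g$ supplies the element of $\mathcal G$ needed for the second.
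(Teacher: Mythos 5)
Your proof is correct and follows exactly the argument the paper intends: the paper states this proposition without a formal proof, but the motivating paragraph preceding the definition of locality sketches precisely your admissibility step ($\dd(g\circ f)=f^{-1}(\dd(g))\in\mathcal F$ by locality of $f$ applied to $\dd(g)\in\mathcal G$), and your locality step via $(g\circ f)^{-1}(H)=f^{-1}(g^{-1}(H))$ is the evident companion. Nothing is missing.
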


\begin{definition} We denote the set of partial functions from $S=\bigcup\mathcal F$ to $T=\bigcup\mathcal G$, admissible with respect to $\mathcal F$ and local with respect to $\mathcal G$, by $\LPartial(\mathcal F,\mathcal G)$.  This defines a category $\LPartial$, where the identity arrow from $\mathcal F$ to itself is just the identity function on $S=\bigcup\mathcal F$.
\end{definition}

\begin{proof} [Proof that $\LPartial$ is a category] If $f:\mathcal F\to\mathcal G$ and $g:\mathcal G\to\mathcal H$, then $g\circ f$ is defined as the partial function with domain of definition $\dd(f)\cap f^{-1}(g)$, sending $s\in\dd(f)$ to $g(f(s))$.  That is, it is the partial function corresponding to the relational product of $f$ and $g$, seen as relations.
The axioms of a category are immediate.
\end{proof}

\section{\texorpdfstring{Germs and the Category $\CatFil$}{Germs}}\mylabel{S:Germs}

 \subsection{Germs of partial functions}
 
\begin{definition}\mylabel{D:GAPF} If $\mathcal F$ is a filter of subsets of a set $S$, then a
\emph{germ of admissible partial functions from $\mathcal F$ to a set $T$} is an $\equiv_{\mathcal F}$-equivalence class of such partial functions, where $f\equiv_{\mathcal F}g$ iff for some $F\in\mathcal F$, $\dd(f)\cap F=\dd(g)\cap F$ and $f(s)=g(s)$ for all $s\in\dd(f)\cap\dd(g)\cap F$.

If $f$ is a partial function, we will denote its germ (the $\equiv_{\mathcal F}$-equivalence class containing $f$) by $\Gamma f$, or by $f/\mathcal F$.  We will also use $\Gamma$ as a set-function, so that if $Y$ is a set of admissible partial functions from $\mathcal F$ to $T$, $\Gamma(Y)$ will denote the set of germs of the partial functions in $Y$.  We will use $\Gamma$ in this way particularly in two cases:  we will shortly define the hom-set $\CatFil(\mathcal G,\mathcal H)=\Gamma(\LPartial(\mathcal H,\mathcal G))$ of the category $\CatFil$, and we will later define an internal hom-object $\mathcal G^{\mathcal H}=\Fg\{\,\Gamma(\Partial(\mathcal H,\mathcal G,G)\,\}$ for the category $\CatFil$ using the base of sets $\Gamma(\Partial(\mathcal H,\mathcal G,G)$.
\end{definition}

\begin{theorem}\mylabel{T:AnyRep} Let  $f$, $g$ be partial functions from $S$ to $T$, and  $\mathcal F$ a filter of subsets of $S$ such that $f\equiv_{\mathcal F}g$. We have
\begin{enumerate}
\item If $\mathcal F'$ is a subfilter of $\mathcal F$, then $f(\mathcal F')=g(\mathcal F')$, and
\item if $\mathcal G$ is a filter of subsets of $T$, then $f^{-1}(\mathcal G)\wedge\mathcal F= g^{-1}(\mathcal G)\wedge\mathcal F$
\end{enumerate}
\end{theorem}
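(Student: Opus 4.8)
The plan is to reduce both statements to a single observation: the hypothesis $f\equiv_{\mathcal F}g$ supplies a set $F_0\in\mathcal F$ on which $f$ and $g$ literally restrict to the same partial function. Unwinding Definition~\ref{D:GAPF}, there is an $F_0\in\mathcal F$ with $\dd(f)\cap F_0=\dd(g)\cap F_0$ and $f(s)=g(s)$ for all $s$ in this common set; hence $f|_{F_0}=g|_{F_0}$ as partial functions. First I would record two consequences of this, both immediate from the definition of restriction (for the first, this is Lemma~\ref{T:RestrX}(1)): (a) for every $D\subseteq F_0$ we have $f(D)=f|_{F_0}(D)=g|_{F_0}(D)=g(D)$; and (b) for every $D'\subseteq T$ we have $(f|_{F_0})^{-1}(D')=f^{-1}(D')\cap F_0$, and likewise for $g$, so that $f^{-1}(D')\cap F_0=g^{-1}(D')\cap F_0$. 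These two facts carry all the content; the rest is bookkeeping with filter bases.

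For part (1), note that $\mathcal F'\leq\mathcal F$ means $\mathcal F\subseteq\mathcal F'$, so $F_0\in\mathcal F'$. As in the proof of Theorem~\ref{T:GaloisOne}, $f(\mathcal F')=\Up\{\,f(F)\mid F\in\mathcal F'\,\}$ and likewise for $g$, since monotone images of the filter $\mathcal F'$ are filter bases (Remark~\ref{R:FBtoFB}). I would then show the two bases are mutually cofinal: given $F\in\mathcal F'$, the set $F\cap F_0$ again lies in $\mathcal F'$ and satisfies $g(F\cap F_0)=f(F\cap F_0)\subseteq f(F)$ by (a) and monotonicity; symmetrically every $g(F)$ dominates some $f(F\cap F_0)$. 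Filter bases that are mutually cofinal generate equal up-sets, so $f(\mathcal F')=g(\mathcal F')$.

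For part (2), I would use that in the reverse-inclusion order the meet is generated by the union of the filters, so $f^{-1}(\mathcal G)\wedge\mathcal F$ has the filter base $\{\,f^{-1}(G)\cap F\mid G\in\mathcal G,\ F\in\mathcal F\,\}$, and similarly with $g$. The same cofinality game then applies: given a base element $f^{-1}(G)\cap F$, replace $F$ by $F\cap F_0\in\mathcal F$ and invoke (b) to get $g^{-1}(G)\cap(F\cap F_0)=f^{-1}(G)\cap(F\cap F_0)\subseteq f^{-1}(G)\cap F$; the reverse inclusion is symmetric. Hence the two filters have mutually cofinal bases and coincide.

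The point requiring care is part (2), and specifically why the meet with $\mathcal F$ cannot be dropped. Intersecting with $F_0$ is exactly the maneuver that lets (b) convert a comparison of $f^{-1}$ and $g^{-1}$ (which may disagree wildly off $F_0$) into an equality, and $F\cap F_0\in\mathcal F$ is available precisely because we are meeting with $\mathcal F$. I would also be careful to use the full strength of the equivalence in Definition~\ref{D:GAPF}---the domain condition $\dd(f)\cap F_0=\dd(g)\cap F_0$, not merely agreement where both are defined---since that is what yields the clean restriction identity $f|_{F_0}=g|_{F_0}$ underlying (a) and (b).
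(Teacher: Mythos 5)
Your proof is correct and takes essentially the same approach as the paper's: both extract an agreement set $F_0\in\mathcal F$ on which $f$ and $g$ restrict to the same partial function, then compare filter bases after intersecting with $F_0$, concluding by mutual cofinality. The paper's version is just terser, restricting the generating sets $\{\,f(F')\mid F'\in\mathcal F'\,\}$ and $\{\,f^{-1}(G)\cap F''\,\}$ to sets lying inside $F_0$ rather than isolating your observations (a) and (b) as explicit lemmas.
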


\begin{proof} 
 Let $F\in\mathcal F$ be such that $\dd(f)\cap F=\dd(g)\cap F$ and $f=g$ on $\dd(f)\cap\dd(g)\cap F$. Then

(1): Since $F\in\mathcal F$, and $\mathcal F'\leq\mathcal F$, there is an $F'\in\mathcal F'$ such that $F'\subseteq F$. We have $\dd(f)\cap F'=\dd(g)\cap F'$ and $f=g$ on $\dd(f)\cap\dd(g)\cap F'$. Thus,
\begin{align*}
f(\mathcal F')&=\Fg\{\,f(F')\mid F'\in\mathcal F'\,\}\\
&=\Fg\{\,f(F')\mid F'\in\mathcal F'\text{ and }F'\subseteq F\,\}\\
&=\Fg\{\,g(F')\mid F'\in\mathcal F'\text{ and }F'\subseteq F\,\}\\
&=\Fg\{\,g(F')\mid F'\in\mathcal F'\,\}\\
&=g(\mathcal F').
\end{align*}

(2): If $F\in\mathcal F$ is such that $\dd(f)\cap F=\dd(g)\cap F$ and $f(s)=g(s)$ for all $s\in\dd(f)\cap\dd(g)\cap F$, then the same statement is true for any smaller $F$. Consequently, we have

\begin{align*} f^{-1}(\mathcal G)\wedge\mathcal F&=\Fg\{\,f^{-1}(G)\cap F''\mid G\in\mathcal G, F''\in\mathcal F\,\}\\
	&=\Fg\{\,g^{-1}(G)\cap F''\mid G\in\mathcal G, F''\in\mathcal F\,\}\\
	&=g^{-1}(\mathcal G)\wedge\mathcal F;
\end{align*}
\end{proof}

\begin{notation}
 We continue to use
roman letters
$f$, $g$, etc.\ to denote partial functions, and will
use greek letters
$\varphi$,
$\gamma$, etc.\  for germs.
\end{notation}

\subsection{\texorpdfstring{$\varphi(\mathcal F)$ and $\varphi^{-1}(\mathcal G)$}{PushPullFilterAlongGerm}}

\begin{notation}\label{N:PushPullDef}
Let $\mathcal F$, $\mathcal F'$ be filters of subsets of $S$ such that $\mathcal F'\leq\mathcal F$, and let $\mathcal G$ be a filter of subsets of $T$.  If $\varphi$ is a germ of partial functions from $S$ to $T$ and admissible wrt $\mathcal F$, then we define
\[\varphi(\mathcal F')= f(\mathcal F')\]
and
\[\varphi^{-1}(\mathcal G)=f^{-1}(\mathcal G)\wedge\mathcal F,\]
where $f$ is any admissible partial function representing $\varphi$. By Theorem~\ref{T:AnyRep}, these formulae are independent of the choice of $f$.
\end{notation}

\begin{proposition} Let $\mathcal F\in\Fil S$, and $\mathcal G\in\Fil T$.  If $\varphi$ is a germ of partial functions admissible wrt $\mathcal F$, then the following are equivalent:
\begin{enumerate}
\item For some admissible partial function $f:\mathcal G\to T$ representing $\varphi$, $f$ is local with respect to $\mathcal G$;
\item For every admissible partial function $f:\mathcal G\to T$ representing $\varphi$, $f$ is local with respect to $\mathcal G$;
\item $\varphi(\mathcal F)\leq\mathcal G$.
\end{enumerate}
\end{proposition}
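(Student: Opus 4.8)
The plan is to prove the implications $(1)\Rightarrow(3)\Rightarrow(2)\Rightarrow(1)$, the last being immediate: admissibility of $\varphi$ guarantees that at least one admissible representative exists, so $(2)$, a statement about every representative, trivially specializes to $(1)$, a statement about some representative. The engine driving the two substantive implications is the adjunction of Theorem~\ref{T:GaloisOne}(1), together with the representative-independence of $\varphi(\mathcal F)$ supplied by Theorem~\ref{T:AnyRep}(1).

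First I would record the key translation: for an admissible representative $f$ of $\varphi$, locality of $f$ with respect to $\mathcal G$ is equivalent to the lattice inequality $\mathcal F\leq f^{-1}(\mathcal G)$. Indeed, $f^{-1}(\mathcal G)=\Up\{\,f^{-1}(G)\mid G\in\mathcal G\,\}$, and by the ordering convention $\mathcal F\leq f^{-1}(\mathcal G)$ unwinds to $f^{-1}(\mathcal G)\subseteq\mathcal F$, which holds precisely when each generator $f^{-1}(G)$ lies in $\mathcal F$ (using that $\mathcal F$ is upward closed and closed under finite intersection). This is exactly the definition of locality, since $f^{-1}(G)\subseteq\dd(f)$ gives the required $F\subseteq\dd(f)$ with $f(F)\subseteq G$.

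For $(1)\Rightarrow(3)$: given a local admissible representative $f$, the translation yields $\mathcal F\leq f^{-1}(\mathcal G)$. Since $f$ is admissible we have $\dd(f)\in\mathcal F$, so Theorem~\ref{T:GaloisOne}(1) applies and gives $f(\mathcal F)\leq\mathcal G$; as $\varphi(\mathcal F)=f(\mathcal F)$ by Notation~\ref{N:PushPullDef}, this is $(3)$. For $(3)\Rightarrow(2)$: let $f$ be \emph{any} admissible representative. By Theorem~\ref{T:AnyRep}(1), $f(\mathcal F)=\varphi(\mathcal F)\leq\mathcal G$, and again $\dd(f)\in\mathcal F$, so the reverse direction of the adjunction in Theorem~\ref{T:GaloisOne}(1) gives $\mathcal F\leq f^{-1}(\mathcal G)$; by the translation, $f$ is local. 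As $f$ was arbitrary, $(2)$ holds.

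I expect no serious obstacle. The only thing requiring care is to invoke admissibility ($\dd(f)\in\mathcal F$) at exactly the point where Theorem~\ref{T:GaloisOne}(1) is used, since that hypothesis is what makes the Galois equivalence between $f(\mathcal F)\leq\mathcal G$ and $\mathcal F\leq f^{-1}(\mathcal G)$ available; and to lean on Theorem~\ref{T:AnyRep}(1) to pass from ``some representative'' to ``every representative,'' which is legitimate precisely because $\varphi(\mathcal F)$ does not depend on the chosen representative.
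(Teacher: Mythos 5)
Your proof is correct and follows the same cyclic decomposition as the paper's: $(2)\Rightarrow(1)$ is dismissed as trivial, and the substance lies in $(1)\Rightarrow(3)$ and $(3)\Rightarrow(2)$. The only difference in route is that you channel both substantive implications through the adjunction of Theorem~\ref{T:GaloisOne}(1), first translating locality into $\mathcal F\leq f^{-1}(\mathcal G)$, whereas the paper argues directly: it simply observes that the definition of locality (``for each $G\in\mathcal G$ there is $F\in\mathcal F$ with $f(F)\subseteq G$'') is, for an admissible $f$, literally the condition $f(\mathcal F)\leq\mathcal G$, and then invokes representative-independence of $f(\mathcal F)$. Your detour is legitimate and arguably more systematic, since it reuses an already-established lemma rather than re-unwinding definitions; but it buys nothing extra here, because the proof of Theorem~\ref{T:GaloisOne}(1) is itself exactly that unwinding, and you correctly flag the one hypothesis ($\dd(f)\in\mathcal F$) that makes the adjunction applicable. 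Both arguments lean on Theorem~\ref{T:AnyRep}(1) at the same spot, to pass from ``some representative'' to ``every representative.''
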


\begin{proof} Certainly $(2)\implies(1)$.

$(1)\implies(3)$: Let $f:\mathcal G\to T$ be an admissible partial function representing $\varphi$, local with respect to $\mathcal G$, and suppose that we are given $G\in\mathcal G$. Since $f$ is local wrt $\mathcal G$, there is an $F\in\mathcal F$ such that $f(F)\subseteq G$.  This shows that $\varphi(\mathcal F)=f(\mathcal F)\leq\mathcal G$.

$(3)\implies(2)$: Assume that $\varphi(\mathcal F)\leq\mathcal G$, and let $f:\mathcal G\to T$ be an admissible partial function representing $\varphi$.  Let $G$ be any element of $\mathcal G$. Since $\varphi(\mathcal F)\leq\mathcal G$, there is an $f\in\mathcal F$ such that $f(F)\subseteq G$. Since $G$ was any element of $\mathcal G$, this shows that $f$ is local wrt $\mathcal G$.

\end{proof}

\begin{theorem} Let $\mathcal F$ be a filter of subsets of a set $S$, and let $\mathcal G$ be a filter of subsets of another set $T$. We have
\begin{enumerate}
\item $\equiv_{\mathcal F}$ is an equivalence relation on the set of partial functions from $S$ to $T$;
\item if $f\equiv_{\mathcal F}g$, then $f$ is admissible wrt $\mathcal F$ iff $g$ is admissible wrt $\mathcal F$;
\item if $f\equiv_{\mathcal F}g$, then $f$ is local wrt $\mathcal G$ iff $g$ is local wrt $\mathcal G$;
\item $\equiv_{\mathcal F}$ is an equivalence relation on the set of partial functions from $S$ to $T$ local wrt $\mathcal G$;
\item If $f$ and $g$ are admissible (wrt $\mathcal F$) functions from $S$ to $T$, and $f\equiv_{\mathcal F}g$, then there is an $F\in\mathcal F$ such that $F\subseteq\dd(f)\cap\dd(g)$ and $f|_F=g|_F$.
\item If $f$, $f':S\to T$ are partial functions, admissible with respect to $\mathcal F$ and local with respect to $\mathcal G$, with $f\mathrel{\equiv_{\mathcal F}}f'$, and $g$, $g'\in\CatFil(\mathcal G,\bigcup\mathcal H)$ are admissible with respect to $\mathcal H$, with $g\mathrel{\equiv_{\mathcal G}}g'$, then $(g\circ f)\mathrel{\equiv_{\mathcal F}}(g'\circ f')$.
\end{enumerate}
\end{theorem}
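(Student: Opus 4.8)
The plan is to dispatch parts (1)--(5) by direct manipulation of the defining condition for $\equiv_{\mathcal F}$ (namely, that some $F\in\mathcal F$ makes the domains agree after intersecting with $F$, and makes the values agree on the common domain inside $F$), and then to treat part (6), the well-definedness of composition, as the substantive step.

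For (1), reflexivity follows by taking $F=S\in\mathcal F$; symmetry is immediate since the defining condition is symmetric in $f$ and $g$; and transitivity follows by intersecting witnesses, since if $f\equiv_{\mathcal F}g$ is witnessed by $F_1$ and $g\equiv_{\mathcal F}h$ by $F_2$, then on $F=F_1\cap F_2\in\mathcal F$ the three domains agree after intersecting with $F$ and the chain $f(s)=g(s)=h(s)$ holds on $\dd(f)\cap\dd(h)\cap F$. For (2), since admissibility means $\dd(f)\in\mathcal F$, I would observe that $\dd(g)\cap F=\dd(f)\cap F\in\mathcal F$ and conclude $\dd(g)\in\mathcal F$ by upward closure, with symmetry giving the converse. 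Part (3) I would prove directly: given a witness $F_0$ for $f\equiv_{\mathcal F}g$ and, for each $G\in\mathcal G$, a set $F\in\mathcal F$ with $F\subseteq\dd(f)$ and $f(F)\subseteq G$ (locality of $f$), the set $F\cap F_0\in\mathcal F$ lies in $\dd(g)$ and satisfies $g(F\cap F_0)\subseteq G$, so $g$ is local; symmetry finishes it. Part (4) is then just the remark that, by (3), $\equiv_{\mathcal F}$ carries local functions to local functions, so its restriction to the local functions is again an equivalence relation by (1). Part (5) follows by taking $F=\dd(f)\cap\dd(g)\cap F_0$, which lies in $\mathcal F$ because $f$ and $g$ are admissible, is contained in $\dd(f)\cap\dd(g)$, and on which $f$ and $g$ agree.

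The core of the theorem is part (6), and the plan is to reduce everything to agreement on a single member of $\mathcal F$ by first invoking part (5). Applying (5) to the admissible pair $f,f'$ yields $F_1\in\mathcal F$ with $F_1\subseteq\dd(f)\cap\dd(f')$ and $f|_{F_1}=f'|_{F_1}$; applying (5) to the admissible pair $g,g'$ yields $G_1\in\mathcal G$ with $G_1\subseteq\dd(g)\cap\dd(g')$ and $g|_{G_1}=g'|_{G_1}$. The decisive move is to use locality of $f$ with respect to $\mathcal G$: since $G_1\in\mathcal G$, we have $f^{-1}(G_1)\in\mathcal F$, so $F=F_1\cap f^{-1}(G_1)\in\mathcal F$. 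I would then verify that this $F$ witnesses $(g\circ f)\equiv_{\mathcal F}(g'\circ f')$, and in fact lies inside both composite domains: for $s\in F$ we have $s\in\dd(f)$ with $f(s)\in G_1\subseteq\dd(g)$, so $s\in\dd(g\circ f)$ by Lemma~\ref{T:DDLemma}, while $s\in F_1$ gives $f'(s)=f(s)\in G_1\subseteq\dd(g')$, so likewise $s\in\dd(g'\circ f')$. Hence $F\subseteq\dd(g\circ f)\cap\dd(g'\circ f')$, the domain condition on $F$ holds trivially, and writing $t=f(s)=f'(s)\in G_1$ gives $(g\circ f)(s)=g(t)=g'(t)=(g'\circ f')(s)$.

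I expect the main obstacle to be precisely this interplay in (6): because $\dd(g\circ f)=f^{-1}(\dd(g))$ can be far smaller than $\dd(f)$, one cannot simply intersect the two equivalence witnesses and hope the composites are even defined there. It is the locality of $f$ that rescues the argument, letting one pull the agreement set $G_1$ of $g$ and $g'$ back into $\mathcal F$ and thereby force both composites to be simultaneously defined on a single member of $\mathcal F$ and to coincide on it.
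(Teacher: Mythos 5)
Your proof is correct, and parts (1)--(5) coincide with the paper's own argument almost line for line. The only genuine divergence is in part (6). The paper works directly with raw equivalence witnesses $F\in\mathcal F$ and $G\in\mathcal G$ (not necessarily contained in any domain of definition) and establishes the domain condition by a chain of set identities, namely $\dd(g\circ f)\cap F\cap f^{-1}(G)=\dd(g'\circ f')\cap F\cap (f')^{-1}(G)$; this requires the intermediate identity $f^{-1}(\dd(g)\cap G)\cap F=(f')^{-1}(\dd(g')\cap G)\cap F$, a step the author explicitly flags with a bracketed question in the manuscript (it does hold, since any $s\in\dd(f)\cap F=\dd(f')\cap F$ satisfies $f(s)=f'(s)$, but the paper leaves this unverified). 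You instead normalize first: invoking part (5) you replace both equivalences by agreement sets $F_1\subseteq\dd(f)\cap\dd(f')$ and $G_1\subseteq\dd(g)\cap\dd(g')$ on which the functions literally coincide, and then use locality of $f$ to form the witness $F=F_1\cap f^{-1}(G_1)\in\mathcal F$, which lands \emph{inside} both composite domains, so the domain-agreement clause of $\equiv_{\mathcal F}$ becomes trivial and the value agreement is a one-line computation. Both proofs turn on the same key idea --- locality of $f$ lets one pull the $\mathcal G$-agreement set back into $\mathcal F$, which is exactly what keeps both composites defined on a common filter set --- but your preprocessing via (5) buys a verification with no delicate set bookkeeping, and in particular it silently repairs the step the paper itself was unsure about.
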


\begin{proof}
(1): Let $f\equiv_{\mathcal F}g\equiv_{\mathcal F}h$. Then there is an $F\in\mathcal F$ such that $\dd(f)\cap F=\dd(g)\cap F$ and $f=g$ on $\dd(f)\cap\dd(g)\cap F$, and an $F'\in\mathcal F$ such that $\dd(g)\cap F'=\dd(h)\cap F'$ and $g=h$ on $\dd(g)\cap\dd(h)\cap F'$. Then $F\cap F'\in\mathcal F$, $\dd(f)\cap F\cap F'=\dd(g)\cap F\cap F'$, $\dd(g)\cap F\cap F'=\dd(h)\cap F'$, and $f=h$ on $\dd(f)\cap\dd(g)\cap\dd(h)\cap F\cap F'=\dd(f)\cap\dd(h)\cap F\cap F'$. Thus, $\equiv_{\mathcal F}$ is transitive. Reflexivity and symmetricity are obvious.

(2): $f$ is admissible wrt $\mathcal F$ iff $\dd(f)\in\mathcal F$, and likewise, $g$ is admissible iff $\dd(f)\in\mathcal F$. If $f\equiv_{\mathcal F}g$, then there is an $\bar F\in\mathcal F$ such that $\dd(f)\cap \dd(g)\cap \bar F$; if $f$ is admissible so that $\dd(f)\in\mathcal F$, then $\dd(g)\cap \bar F\in\mathcal F$, which implies that $\dd(g)\in\mathcal F$. Thus, $f$ admissible implies $g$ admissible. The converse follows by symmetry.

(3): If $f\equiv_{\mathcal F}g$, then there is an $\bar F\in\mathcal F$ such that $f=g$ on $\dd(f)\cap\dd(g)\cap \bar F$, and if $f$ is local wrt $\mathcal G$, then for any $G\in\mathcal G$ there is an $f\in\mathcal F$ such that $f(F)\subseteq G$. Then $f(F\cap\bar F)\subseteq G$, which implies that $g(F\cap\bar F)\subseteq G$. Thus, $g$ is local wrt $\mathcal G$.  The converse follows by symmetry.

(4): Follows from (1) and (3).

(5): We have $\dd(f)\cap\dd(g)\in\mathcal F$, and there is an $\bar F\in\mathcal F$ such that $f$ and $g$ are equal on $\dd(f)\cap\bar F$ and $\dd(g)\cap\bar F$. We let $F=\dd(f)\cap\dd(g)\cap\bar F$.

(6): $\dd(g\circ f)=\dd(f)\cap f^{-1}(\dd(g))$. $\dd(g'\circ f')=\dd(f')\cap (f')^{-1}(\dd(g'))$. Let $F\in\mathcal F$ be such that $\dd(f)\cap F=\dd(f')\cap F$ and $f=f'$ on $\dd(f)\cap\dd(f')\cap F$, and let $G\in\mathcal G$ be such that $\dd(g)\cap G=\dd(g')\cap G$ and $g=g'$ on $\dd(g)\cap\dd(g')\cap G$. We have [perhaps we need to show $f^{-1}(\dd(g')
\cap G)\cap F=(f')^{-1}(\dd(g')
\cap G)\cap F$?]

\begin{align*}
\dd(g\circ f)\cap F\cap f^{-1}(G)
&=\dd(f)\cap f^{-1}(\dd(g))\cap F\cap f^{-1}(G)\\
&=\left[\dd(f)\cap F\right]\cap\left[ f^{-1}(\dd(g)\cap G)\cap F\right]\\
&=\left[\dd(f')\cap F\right]\cap \left[(f')^{-1}(\dd(g')\cap G)\cap F\right]\\
&=\dd(f')\cap(f')^{-1}(\dd(g'))\cap F\cap (f')^{-1}(G)\\
&=\dd(g'\circ f')\cap F\cap (f')^{-1}(G)
\end{align*}
and if $s\in F\cap f^{-1}(G)$, then
$
g(f(s))=g'(f(s))
=g'(f'(s))
$.
\end{proof}

\begin{remark} Looking at the statements of the Theorem, it makes sense to call the germ $f/\mathcal F$ of a partial function $f:S\to T$ \emph{admissible} (wrt $\mathcal F\in\Fil S$) if $f$ is admissible wrt $\mathcal F$, and \emph{local} (wrt $\mathcal G\in\Fil T$) if $f$ is local wrt $\mathcal G$.
\end{remark}

\subsection{Galois Connection}
Now we want to show that like the mappings $\mathcal F'\mapsto f(\mathcal F')$ and $\mathcal G\mapsto f^{-1}(\mathcal G)\wedge\mathcal F$, the mappings $\mathcal F'\mapsto\varphi(\mathcal F')$ and $\mathcal G\mapsto\varphi^{-1}(\mathcal G)$ constitute a Galois connection:

\begin{theorem} Let $\mathcal F$ and $\mathcal G$ be filters, on sets $S$ and $T$, respectively. Let $\mathcal F'$ be a filter such that $\mathcal F'\leq\mathcal F$. If $\varphi$ is a germ of partial functions from $S$ to $T$ admissible with respect to $\mathcal F$, then 
\[\varphi(\mathcal F')\leq\mathcal G\iff
\mathcal F'\leq \varphi^{-1}(\mathcal G).\]
\end{theorem}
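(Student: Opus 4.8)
The plan is to reduce everything to the Galois connection for a single admissible partial function, namely Theorem~\ref{T:GaloisOne}(1), by unwinding the definitions in Notation~\ref{N:PushPullDef}. First I would fix an admissible partial function $f$ representing $\varphi$, so that $\dd(f)\in\mathcal F$; by Theorem~\ref{T:AnyRep} the quantities $\varphi(\mathcal F')=f(\mathcal F')$ and $\varphi^{-1}(\mathcal G)=f^{-1}(\mathcal G)\wedge\mathcal F$ do not depend on this choice. Thus the claim becomes
\[f(\mathcal F')\leq\mathcal G\iff\mathcal F'\leq f^{-1}(\mathcal G)\wedge\mathcal F.\]

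Next I would observe the point that makes the hypotheses of Theorem~\ref{T:GaloisOne}(1) available with $\mathcal F'$ in place of $\mathcal F$. Because the ordering is reverse inclusion, $\mathcal F'\leq\mathcal F$ means $\mathcal F\subseteq\mathcal F'$ as sets of subsets; hence $\dd(f)\in\mathcal F$ forces $\dd(f)\in\mathcal F'$, so $f$ is admissible with respect to $\mathcal F'$ as well. Theorem~\ref{T:GaloisOne}(1), applied to $f$, the filter $\mathcal F'$ (for which $\dd(f)\in\mathcal F'$), and the filter $\mathcal G$, then gives
\[f(\mathcal F')\leq\mathcal G\iff\mathcal F'\leq f^{-1}(\mathcal G).\]

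To finish I would invoke the universal property of the meet in the lattice of filters: for any filters, $\mathcal F'\leq f^{-1}(\mathcal G)\wedge\mathcal F$ holds iff both $\mathcal F'\leq f^{-1}(\mathcal G)$ and $\mathcal F'\leq\mathcal F$ hold. Since $\mathcal F'\leq\mathcal F$ is part of the hypotheses, the right-hand side collapses to $\mathcal F'\leq f^{-1}(\mathcal G)$, and chaining this equivalence with the one from Theorem~\ref{T:GaloisOne}(1) yields the desired statement. The only step I would flag as substantive, rather than a routine formal consequence of the definitions, is verifying that admissibility transfers from $\mathcal F$ to $\mathcal F'$, which is exactly where the reverse-inclusion convention for $\leq$ is used; the extra factor $\wedge\mathcal F$ appearing in $\varphi^{-1}(\mathcal G)$ is then precisely absorbed by the hypothesis $\mathcal F'\leq\mathcal F$, so it plays no role beyond guaranteeing well-definedness of the germ-level pullback.
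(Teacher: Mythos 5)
Your proposal is correct and takes essentially the same route as the paper's own proof: fix an admissible representative $f$ of $\varphi$, apply Theorem~\ref{T:GaloisOne}(1) to $f$ with the subfilter $\mathcal F'$, and use the hypothesis $\mathcal F'\leq\mathcal F$ to absorb the extra factor $\wedge\,\mathcal F$ in $\varphi^{-1}(\mathcal G)=f^{-1}(\mathcal G)\wedge\mathcal F$ via the universal property of the meet. The only difference is presentational: you spell out the step the paper leaves implicit, namely that $\mathcal F'\leq\mathcal F$ (reverse inclusion) forces $\dd(f)\in\mathcal F'$, so that Theorem~\ref{T:GaloisOne}(1) is indeed applicable to $\mathcal F'$.
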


\begin{proof}
Let $f$ represent $\varphi$. Then by Theorem~\ref{T:AnyRep} and the notation that follows it,
\begin{align*}
\varphi(\mathcal F')\leq\mathcal G&\iff f(\mathcal F')\leq\mathcal G\\
&\iff\mathcal F'\leq f^{-1}(\mathcal G)\wedge\mathcal F\\
&\iff\mathcal F'\leq f^{-1}(\mathcal G)\\
&\iff\mathcal F'\leq\varphi^{-1}(\mathcal G).
\end{align*}
\end{proof}

\begin{unotation} Just as we denote by $f|_F$ the restriction to $F$ of an admissible partial function $f$
we can restrict a germ $\varphi$ to a smaller subdomain filter. Thus if $\varphi=f/\mathcal F$, and $\bar{\mathcal F}\leq\mathcal F$, we can form $\varphi/\bar{\mathcal F}=(f/\mathcal F)/\bar{\mathcal F}$. because since $\bar F\leq\mathcal F$, there is an $F'\in\bar{\mathcal F}$ such that $F'\subseteq\dd f$.
\end{unotation}

\begin{proposition} In this situation, $(f/\mathcal F)/\bar{\mathcal F}=f/\bar{\mathcal F}$.\end{proposition}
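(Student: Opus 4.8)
The plan is to recognize that the only thing requiring proof here is the \emph{well-definedness} of the restriction operation on germs; once that is established, the asserted equality holds essentially by unwinding the definition. Writing $\varphi/\bar{\mathcal F}$ means: choose a representative of the germ $\varphi=f/\mathcal F$ and pass to its $\equiv_{\bar{\mathcal F}}$-class. Since $f$ itself is one such representative, the equality $(f/\mathcal F)/\bar{\mathcal F}=f/\bar{\mathcal F}$ is exactly the statement that this procedure yields $f/\bar{\mathcal F}$ no matter which representative is used.

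First I would record the ordering convention: by the Proposition in Section~\ref{S:Filters}, $\bar{\mathcal F}\leq\mathcal F$ means $\mathcal F\subseteq\bar{\mathcal F}$, so that every element of $\mathcal F$ is also an element of $\bar{\mathcal F}$. I would also note, as in the preceding Notation, that $f$ is admissible with respect to $\bar{\mathcal F}$: since $\dd(f)\in\mathcal F\subseteq\bar{\mathcal F}$, the germ $f/\bar{\mathcal F}$ is a legitimate germ of an admissible partial function, so the right-hand side makes sense.

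The heart of the argument is the implication $f\equiv_{\mathcal F}g\implies f\equiv_{\bar{\mathcal F}}g$. Suppose $g$ is any partial function with $g\equiv_{\mathcal F}f$, so that $g$ also represents $\varphi$. By the definition of $\equiv_{\mathcal F}$ there is an $F\in\mathcal F$ with $\dd(f)\cap F=\dd(g)\cap F$ and $f=g$ on $\dd(f)\cap\dd(g)\cap F$. Because $\mathcal F\subseteq\bar{\mathcal F}$, this same $F$ lies in $\bar{\mathcal F}$, and so it witnesses $f\equiv_{\bar{\mathcal F}}g$ directly; no shrinking of $F$ is needed. Hence $g/\bar{\mathcal F}=f/\bar{\mathcal F}$, the restriction $\varphi/\bar{\mathcal F}$ is independent of the chosen representative, and therefore $(f/\mathcal F)/\bar{\mathcal F}=f/\bar{\mathcal F}$ as claimed.

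I do not expect a genuine obstacle: the whole point is that the coarser (smaller, in reverse inclusion) filter $\bar{\mathcal F}$ induces a \emph{coarser} equivalence relation, so every $\equiv_{\mathcal F}$-witness is automatically an $\equiv_{\bar{\mathcal F}}$-witness. The only place one could slip is the direction of the reverse-inclusion ordering --- confusing $\mathcal F\subseteq\bar{\mathcal F}$ with the opposite containment would invert the argument --- so I would state that convention explicitly before invoking it.
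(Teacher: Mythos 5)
Your proof is correct. The paper states this proposition without giving any proof, and your argument --- reducing everything to the well-definedness of restriction on germs via the inclusion $\equiv_{\mathcal F}\;\subseteq\;\equiv_{\bar{\mathcal F}}$, which holds because $\bar{\mathcal F}\leq\mathcal F$ means $\mathcal F\subseteq\bar{\mathcal F}$ under the reverse-inclusion ordering, so every witness $F\in\mathcal F$ already lies in $\bar{\mathcal F}$ --- is exactly the justification the paper implicitly relies on.
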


\subsection{\texorpdfstring{The category $\CatFil$}{CatFil}}\label{S:CatFilSub}
If $S$, $T$ are sets, $\mathcal F\in\Fil S$, and $\mathcal G\in\Fil T$, then we denote the set of germs of partial functions from $S$ to $T$, admissible with respect to $\mathcal F$ and local with respect to $\mathcal G$, by $\CatFil(\mathcal F,\mathcal G)$. This defines $\CatFil$, the \emph{category of filters}, where the identity arrow from $\mathcal F$ to $\mathcal F$ is the germ $1_S/\mathcal F$. We denote by $\Gamma$ the functor from $\LPartial$ to $\CatFil$ that takes an admissible, local partial function $f\in\LPartial(\mathcal F,\mathcal G)$ to its germ $f/\mathcal F\in\CatFil(\mathcal F,\mathcal G)$.

\begin{remark}\mylabel{R:TotalFunction} Although we have needed to handle the boundary case that we mentioned previously, of filters $\mathcal F$ such that $\nullset\in\mathcal F$, sometimes we know that $\nullset\notin\mathcal F$. In this case, proofs can sometimes be simplified by avoiding the need to work with partial functions, for, if $\gamma\in\CatFil(\mathcal F,\mathcal G)$, and $\mathcal G$ is such that $\nullset\notin\mathcal G$, then $\gamma$ has a representative $g$ which is total, i.e. such that $\dd(g)=\bigcup\mathcal G$.
\end{remark}

\section{\texorpdfstring{Factorization of Arrows in
$\CatFil$}{S:Factorization}}\mylabel{S:Factorization}

In this section, we will define a factorization system $\pair{\mathbf E^{\CatFil}}{\mathbf M^{\CatFil}}$ for the category $\CatFil$.  See \cite[Section~\zref{I-S:FactSys}]{reltgeneq} for a discussion of this concept.
We will show that this factorization is a so-called \emph{epi, monic} factorization system. Finally, we look at the
the
$\mathbf M^\CatFil$-subobject lattice of an object $\mathcal F\in\CatFil$, and show that it can be identified
with the lattice of filters $\mathcal F'$ such that $\mathcal F'\leq\mathcal F$.

\subsection{\texorpdfstring{The factorization system $\pair{\mathbf
E^\CatFil}{\mathbf M^\CatFil}$}{FactSysCatFil}}

We define the subcategory ${\mathbf E}^\CatFil$ of $\CatFil$ to
contain all germs $\varphi:\mathcal F\to\mathcal G$ such that
$\varphi(\mathcal F)=\mathcal G$. We define the subcategory $\mathbf M^{\CatFil}$
 to contain all germs $\varphi:\mathcal F\to\mathcal G$ having the
form
$f/\mathcal F$, where $f$ is an admissible partial function one-one on its domain of definition.

\begin{theorem} \mylabel{T:CatFilFactSys}
$\pair{\mathbf
E^\CatFil}{\mathbf M^\CatFil}$ is a factorization system in
$\CatFil$, such that $\mathbf M^\CatFil$ consists of monic, and $\mathbf E^\CatFil$ of epi, arrows.
\end{theorem}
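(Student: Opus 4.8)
The plan is to follow the template for an \emph{epi, monic} factorization system from \cite[Section~\zref{I-S:FactSys}]{reltgeneq}: for each arrow produce a factorization $m\circ e$ with $e\in\mathbf E^\CatFil$ and $m\in\mathbf M^\CatFil$, verify the unique diagonal fill-in for every commuting square having an $\mathbf E^\CatFil$-arrow on the left and an $\mathbf M^\CatFil$-arrow on the right, and separately show that $\mathbf M^\CatFil$-arrows are monic and $\mathbf E^\CatFil$-arrows epic. The containment of isomorphisms in both classes and closure of each class under composition are routine (images compose by Theorem~\ref{T:GaloisOne}(2), and a composite of partial functions injective on their domains is again injective), so the real content is the factorization, the diagonal, and the monic/epi claims.

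First I would build the factorization. Given $\varphi\colon\mathcal F\to\mathcal G$ represented by an admissible local partial function $f$, set $\mathcal H=\varphi(\mathcal F)=f(\mathcal F)$, a filter on $\bigcup\mathcal G$, and put $e=f/\mathcal F\in\CatFil(\mathcal F,\mathcal H)$ and $m=1_{\bigcup\mathcal G}/\mathcal H\in\CatFil(\mathcal H,\mathcal G)$. A short check shows $e$ is local with respect to $\mathcal H$ (for $H\in\mathcal H$ we have $H\supseteq f(F)$ for some $F\in\mathcal F$, whence $f^{-1}(H)\supseteq F\cap\dd(f)\in\mathcal F$) and $e(\mathcal F)=\mathcal H$, so $e\in\mathbf E^\CatFil$; the identity is injective and, since $\mathcal H=f(\mathcal F)\leq\mathcal G$ gives $\mathcal G\subseteq\mathcal H$, it is local with respect to $\mathcal G$, so $m\in\mathbf M^\CatFil$; and $m\circ e=(1\circ f)/\mathcal F=\varphi$ by the result that composition of partial functions respects $\equiv_{\mathcal F}$.

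For the monic and epic claims: if $\varphi=f/\mathcal F$ with $f$ injective on $\dd(f)$ and $\varphi\psi_1=\varphi\psi_2$, then representing $\psi_i$ by $g_i$ and using locality to pass to a set in the source filter on which each $g_i$ takes values in $\dd(f)$, injectivity of $f$ forces $g_1\equiv g_2$ there, so $\varphi$ is monic. Dually, if $\varphi(\mathcal F)=\mathcal G$ and $\psi_1\varphi=\psi_2\varphi$ with $\psi_i$ represented by $h_i$, then $h_1\circ f\equiv_{\mathcal F}h_2\circ f$ on a witnessing set $F_0\in\mathcal F$; pushing $F_0\cap\dd(f)$ forward to $f(F_0\cap\dd(f))\in\mathcal G$ (which lies in $\mathcal G$ because $f(\mathcal F)=\mathcal G$, so every $f(F)$ belongs to $\mathcal G$) and checking both the domain agreement and the values on that set yields $h_1\equiv_{\mathcal G}h_2$, so $\varphi$ is epic.

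Finally the diagonal. Given a commuting square whose left edge $e\in\mathbf E^\CatFil$ is represented by $a$ with $a(\mathcal A)=\mathcal B$, whose right edge $m\in\mathbf M^\CatFil$ is represented by an injective $m'$, and whose legs $u,v$ are represented by $u',v'$ with $v'\circ a\equiv_{\mathcal A}m'\circ u'$, I would define the candidate diagonal $d\colon\bigcup\mathcal B\to\bigcup\mathcal C$ by letting $d(b)$ be the unique element $c\in\dd(m')$ with $m'(c)=v'(b)$, when it exists. The two verifications are $m'\circ d\equiv_{\mathcal B}v'$, which holds on $\dd(d)$ by construction, with the domain cofinal in $\mathcal B$ because the image $a(A_0\cap\dd(a))\in\mathcal B$ of a set witnessing the commutativity lands in $\rr(m')$; and $d\circ a\equiv_{\mathcal A}u'$, because $m'(d(a(x)))=v'(a(x))=m'(u'(x))$ with $u'(x)\in\dd(m')$ (by locality of $u$), so injectivity of $m'$ gives $d(a(x))=u'(x)$. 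One also checks $d$ is admissible with respect to $\mathcal B$ and local with respect to $\mathcal C$, so $d/\mathcal B$ is a genuine arrow, and uniqueness is immediate since $m$ is monic. The main obstacle throughout is the partial-function bookkeeping: each equivalence must be exhibited on an explicit set of the correct filter, which repeatedly forces one to intersect domains with sets in the source filter and to push images forward along $a$ using $a(\mathcal A)=\mathcal B$ in order to remain inside $\mathcal B$, $\mathcal C$, and $\mathcal G$.
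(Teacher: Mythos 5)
Your proposal is correct and structurally matches the paper's proof: the same factorization through the image filter $\varphi(\mathcal F)$ with $\varepsilon=f/\mathcal F$ and $\mu$ the germ of the identity, essentially the same monic and epi arguments, and uniqueness of the diagonal via monicity. The one genuine difference is how the diagonal is produced. The paper factors representatives of the $\mathbf E^\CatFil$-edge and of the top leg through the epi--monic factorization of $\Set$ and obtains the diagonal from the unique diagonalization of a square in $\Set$ (an onto function against a one-one function), so its defining equation is the triangle with the top leg, and well-definedness is delegated to the $\Set$-level universal property. You instead build the diagonal directly as $d=(m')^{-1}\circ v'$, the bottom leg followed by the inverse of the injective representative of the $\mathbf M^\CatFil$-edge --- which is the same device the paper itself uses later, in the proof of Theorem~\ref{T:MSubLattice}. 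The two constructions have the same germ; for yours the triangle $m'\circ d\equiv_{\mathcal B}v'$ holds by construction while $d\circ a\equiv_{\mathcal A}u'$ must be recovered from injectivity of $m'$, exactly the reverse of the paper's situation. Your route is the more self-contained one (no appeal to the chosen $\Set$ factorization), at the price of the filter bookkeeping you flag; in particular the locality of $d$, which you leave as ``one also checks,'' needs precisely the paper's argument: given $C\in\mathcal C$, locality of $u$ gives $A\in\mathcal A$ with $u'(A)\subseteq C$, and then $a(A\cap A_0\cap\dd(a))$ lies in $\mathcal B$ because $a(\mathcal A)=\mathcal B$, and is a set that $d$ maps into $C$.
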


\begin{proof}

Let $\varphi:\mathcal F\to\mathcal G$, where $\mathcal F\in\Fil S$ and $\mathcal G\in\Fil T$. If $\varphi=f/\mathcal F$, then
$f$ is a local partial
function from $\mathcal F$ to $f(\mathcal F)$, and if we define $\epsilon:\mathcal F\to f(\mathcal F)$ by
$\epsilon=f/\mathcal F$ and $\mu:f(\mathcal F)\to\mathcal G$ by $\mu=1_T/f(\mathcal F)$, we have a
suitable factorization $\varphi=\mu\circ\epsilon$. For, $\epsilon\in\mathbf E^\CatFil(\mathcal F,f(\mathcal F))$ and $\mu\in\mathbf M^\CatFil(f(\mathcal F),\mathcal G)$.

If $\varphi\in\mathbf E^\CatFil(\mathcal F,\mathcal G)$, then let $\alpha$,
$\beta:\mathcal G\to\mathcal H$, and suppose that
$\alpha\circ\varphi=\beta\circ\varphi$. Let $a$, $b$, and $f$ be partial
functions representing $\alpha$, $\beta$, and
$\varphi$, where $a$ and $b$ can be taken to have $\dd(a)=\dd(b)=G\in\mathcal G$.
Let
$F\in\mathcal F$ be smaller than $\dd(f)$, such that
$f(F)\subseteq G$, and such that
$(a\circ f)|_F=(b\circ f)|_F$. Since
$(a\circ f)|_F=(b\circ f)|_F$, $a|_{f(F)}=b|_{f(F)}$, showing that
$\alpha=\beta$ because (remembering that $\varphi(\mathcal F)=\mathcal G$) $f(F)\in\mathcal G$. Thus, $\varphi$
is epi.

If $\varphi\in\mathbf M^\CatFil(\mathcal F,\mathcal G)$, let $f$ be an admissible partial
function representing $\varphi$, and such that
$f$ is one-one on $\dd(f)=F\in\mathcal F$.
Let $\alpha$, $\beta:\mathcal H\to\mathcal F$ be such that $\varphi\circ \alpha=\varphi\circ\beta$.
Let $a$ and $b$ be representatives of $\alpha$ and
$\beta$ having the same domain of definition $H\in \mathcal H$,
which is such that $a(H)\subseteq F$ and
$b(H)\subseteq F$, and such that $f\circ a=f\circ b$. (Such
representatives can always be constructed by restriction,
since $\alpha$ and $\beta$ are local and
$\varphi\circ\alpha=\varphi\circ\beta$.)
However, $f$ is one-one, implying
$a=b$, which implies that
$\alpha=\beta$. We have proved that
$\varphi$ is monic.

Suppose now that germs $\epsilon$, $\alpha$, $\beta$, and $\mu$
are given, such $\epsilon\in\mathbf E^\CatFil$ and $\mu\in\mathbf M^\CatFil$, and forming a diagram
\[\xymatrix{
&\mathcal F\ar[dl]_\epsilon \ar[dr]^\alpha \\
\mathcal W\ar[dr]_\beta &&\mathcal G\ar[dl]^\mu \\
& \mathcal
H
}\]
which commutes.
Let $\epsilon=e/\mathcal F$, $\alpha=a/\mathcal F$, $\beta=b/\mathcal W$, and
$\mu=m/\mathcal G$, where $\dd(e)=\dd(a)$, $r(e)\subseteq\dd(b)$,
$r(a)\subseteq\dd(m)$, $m$ is one-one, and furthermore
$b\circ e=m\circ a$.

Let $e$ and $a$, considered as functions from $\dd(e)$ to
$\dd(b)$ and from $\dd(a)$ to $\dd(m)$ respectively, be
factored as $e=\mathbf m[e]\circ \mathbf{\tilde e}[e]$ and $a=\mathbf m[a]\circ \mathbf{\tilde e}[a]$ in the category $\Set$. (The notations $\mathbf m[f]$, and $\mathbf{\tilde e}[f]$ for an arrow $f$, which we employ only in this proof, are defined in \cite[Section~\zref{I-S:FactSys}]{reltgeneq}; $e=\mathbf m[e]\circ\mathbf{\tilde e}[e]$ is the canonical chosen factorization of the function $e$ in the usual factorization system of the category $\Set$.) We have a commutative diagram
\[\xymatrix{
&\dd(e)=\dd(a)\ar[dl]_{\mathbf{\tilde e}[e]} \ar[dr]^{\mathbf{\tilde e}[a]} \\
\rr(e)\ar[dr]_{b\circ \mathbf m[e]}\ar@{.>}[rr]_d && \rr(a)\ar[dl]^{m\circ \mathbf m[a]}\\
&\rr(b\circ \mathbf m[e])=\rr(m\circ \mathbf m[a])
}\]
in the category $\Set$ which is uniquely diagonalized as shown by a function we denote by $d$. 
For, $\mathbf{\tilde e}[e]$ is an onto function, and $m\circ \mathbf m[a]$ is a one-one function.

The function $d$ is an admissible partial function from
$\mathcal W$ to $T$ because, $\epsilon$
being in $\mathbf E^\CatFil$,
$\rr(e)\in\mathcal W$. $d$ is local, because if
$G\in\mathcal G$, there is an $F\in\mathcal F$ such that
$a(F)\subseteq G$, and then
$d(\mathbf{\tilde e}[e](F))\subseteq G$; however,
$\mathbf{\tilde e}[e](F)\in\mathcal W$ because
$\epsilon\in\mathbf E^\CatFil$.

Let $\delta\in\CatFil(\mathcal W,\mathcal G)$ be defined by $\delta=d/\mathcal W$. Since $e$, $a$, $b$, and $m$, and $d$ are all admissible, local partial functions, the diagram of germs commutes.
The uniqueness of the diagonal arrow $\delta$ follows
by invoking either the fact that
$\mathbf E^\CatFil$ consists of epi, or the fact that
$\mathbf M^\CatFil$ consists of monic arrows of $\CatFil$.
\end{proof}

\subsection{\texorpdfstring{Epi and monic arrows in $\CatFil$}{EpiMonicCatFil}}
\begin{theorem}\mylabel{T:CatFilFacts} Let $\varphi\in\CatFil(\mathcal F,\mathcal G)$, where $\mathcal F\in\Fil S$ and $\mathcal G\in\Fil T$. We have
\begin{enumerate}
\item If $\varphi$ is epi, then $\varphi\in\mathbf E^\CatFil$; and
\item if $\varphi$ is monic, then $\varphi\in\mathbf M^\CatFil$.
\end{enumerate}
\end{theorem}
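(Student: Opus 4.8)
The plan is to prove both statements by contraposition, in each case producing an explicit pair of distinct arrows out of (resp. into) the relevant object that $\varphi$ fails to separate; this is the natural route since, as the $\mathbb{Z}\hookrightarrow\mathbb{Q}$ example warns, the converse inclusions $\Epi\subseteq\mathbf E^\CatFil$ and $\Mono\subseteq\mathbf M^\CatFil$ are not formal consequences of $\mathbf E^\CatFil\subseteq\Epi$ and $\mathbf M^\CatFil\subseteq\Mono$ (Theorem~\ref{T:CatFilFactSys}) and must use the concrete structure of $\CatFil$. Throughout, fix a representative $f$ of $\varphi$ that is admissible with respect to $\mathcal F$ and local with respect to $\mathcal G$, and recall that locality gives $\varphi(\mathcal F)=f(\mathcal F)\le\mathcal G$, that is $\mathcal G\subseteq f(\mathcal F)$; so $\varphi\in\mathbf E^\CatFil$ exactly when this inclusion is an equality.

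For (1), suppose $\varphi\notin\mathbf E^\CatFil$. Then $\mathcal G\subsetneq f(\mathcal F)$, so I choose $P_0\in f(\mathcal F)\setminus\mathcal G$ together with an $F_0\in\mathcal F$ satisfying $f(F_0)\subseteq P_0$. I take as codomain the coarsest filter $\mathcal H=\{W\}$ on $W=\{0,1\}$ and set $\alpha=a/\mathcal G$ and $\beta=b/\mathcal G$, where $a$ is constantly $0$ on $T=\bigcup\mathcal G$ and $b$ is $0$ on $P_0$ and $1$ off $P_0$. Both are admissible and local, hence genuine arrows $\mathcal G\to\mathcal H$. Composing, $a\circ f$ and $b\circ f$ agree on $F_0\cap\dd(f)\in\mathcal F$ (there $f$ lands in $P_0$, where $a=b$), so $\alpha\circ\varphi=\beta\circ\varphi$; but $a\equiv_{\mathcal G}b$ would force some $G\in\mathcal G$ with $G\subseteq P_0$, hence $P_0\in\mathcal G$ by upward closure, a contradiction. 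Thus $\alpha\ne\beta$ and $\varphi$ is not epi. The only delicate point is keeping the reverse-inclusion ordering straight.

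For (2), I claim it suffices to show that if $\varphi$ is monic then $f$ is one-one on some $F\in\mathcal F$: restricting $f$ to such an $F$ yields a representative of $\varphi$ that is injective on its domain of definition, witnessing $\varphi\in\mathbf M^\CatFil$. So suppose instead that $f$ is one-one on no $F\in\mathcal F$. I form the kernel pair $R=\{\,(s,s')\in\dd(f)\times\dd(f)\mid f(s)=f(s')\,\}$ with projections $p_1,p_2$, and equip $R$ with $\mathcal R=\Fg\{\,(F\times F)\cap R\mid F\in\mathcal F\,\}$ (these sets form a filter base). Each $p_i$ is admissible and local with respect to $\mathcal F$, giving arrows $\pi_1,\pi_2:\mathcal R\to\mathcal F$, and $f\circ p_1=f\circ p_2$ on the nose, so $\varphi\circ\pi_1=\varphi\circ\pi_2$. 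The hypothesis is precisely what forces $\pi_1\ne\pi_2$: an equality $\pi_1=\pi_2$ would require some element of $\mathcal R$ to lie in the diagonal of $R$, hence (by upward closure) some base set $(F\times F)\cap R$ to lie in the diagonal, i.e.\ $f|_F$ injective, which is excluded. Hence $\varphi$ is not monic.

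I expect the monic direction to be the main obstacle: its content is the passage from ``no single $F\in\mathcal F$ on which $f$ is injective'' to an actual pair of distinct germs, which the kernel-pair filter $\mathcal R$ supplies, and one must verify carefully that no element of $\mathcal R$ — not merely no base element — is contained in the diagonal. This reduction to base elements is exactly where the argument must be checked, and it rests on the upward closure of $\mathcal R$. The epi direction, by contrast, is a routine separation once the ordering conventions are pinned down.
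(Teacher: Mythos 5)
Your proof is correct, and while part (1) follows essentially the paper's own route, part (2) takes a genuinely different one. For (1), the paper also argues by contraposition with a two-point codomain; the only difference is that where you use the indicator function of $T\setminus P_0$, the paper chooses, for each $G\in\mathcal G$, a point $g_G\in G$ outside $f(F_0)$ and lets $b$ differ from $a$ exactly at those points, so your variant is marginally cleaner in that it needs no such family of choices. For (2), the paper does not form the kernel pair: it takes as test object the set $W$ of all $F\in\mathcal F$ with $F\subseteq\dd(f)$, invokes the axiom of choice to select witnesses $a_F\neq b_F$ in each such $F$ with $f(a_F)=f(b_F)$, defines total functions $a,b:W\to S$ by $F\mapsto a_F$ and $F\mapsto b_F$, and equips $W$ with the filter $\mathcal H=a^{-1}(\mathcal F)\wedge b^{-1}(\mathcal F)$; then $f\circ a=f\circ b$ outright, while $a/\mathcal H\neq b/\mathcal H$ because every $H\in\mathcal H$ contains some point $F$ of $W$ at which $a(F)=a_F\neq b_F=b(F)$. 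Your object $\mathcal R=\Fg\{\,(F\times F)\cap R\mid F\in\mathcal F\,\}$ achieves the same separation canonically: no choices are needed, the two projections are the obvious arrows against which to test monicity, and the construction is the natural candidate for the actual kernel pair of $\varphi$ in $\CatFil$ (which exists by Theorem~\ref{T:FilHasFiniteLimits}); what the paper's construction buys in exchange is that the inequality of the two germs is visible at a single explicitly named point of each $H\in\mathcal H$. Two small corrections to your own commentary. First, the step you flag is valid, but its justification is the definition of the generated filter --- every member of $\Fg\mathcal B$ contains a member of the base $\mathcal B$ --- rather than upward closure: given $X\in\mathcal R$ with $X\subseteq\Delta$, the base set contained in $X$ is contained in $\Delta$, and injectivity of $f$ on the corresponding $F\cap\dd(f)\in\mathcal F$ follows. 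Second, note that your case split is exhaustive even in the boundary case $\nullset\in\mathcal F$, since $f$ is then vacuously one-one on $\nullset\in\mathcal F$ and the first branch applies; in particular the kernel-pair branch only ever runs when all the base sets $(F\times F)\cap R$ are nonempty.
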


\begin{proof}
(1):
Suppose that $\varphi\in\CatFil(\mathcal F,\mathcal G)$ but $\varphi\notin\mathbf E^{\CatFil}(\mathcal F,\mathcal G)$, i.e. that $\varphi(\mathcal F)<\mathcal G$. This means that if $\varphi=f/\mathcal F$, there is an $F\in\mathcal F$ such that $F\subseteq\dd(f)$ and $f(F)\notin\mathcal G$; thus, for all $G\in\mathcal G$, $G\not\subseteq f(F)$, so there is an element $g_G\in G$ such that $g_G\notin f(F)$. Let $W=\{\,0,1\,\}$. Let $a:T\to W$ send all elements to $0$. Let $b:T\to W$ be the same, except for elements of the form $g_G$ (for any $G$), which it should send to $1$. $a$ and $b$ are total functions, so admissible. By construction, $a/\mathcal G\not= b/\mathcal G$. But, we have $a\circ f=b\circ f$, because if $x\in F$, we cannot have $f(x)=g_G$ for any $G$. Thus, $\varphi$ is not epi.

(2):
Suppose $\varphi\in\CatFil(\mathcal F,\mathcal G)$, but $\varphi\not\in\mathbf M^\CatFil(\mathcal F,\mathcal G)$. That is, we assume that if $\varphi=f/\mathcal F$, for an admissible partial function $f:\mathcal F\to T$, then $f$ is not one-one. For every $F\in\mathcal F$, such that $F\subseteq\dd(f)$, there are $a_F$, $b_F\in F$ such that $a_F\not= b_F$ but $f(a_F)=f(b_F)$. Let $W$ be the set of $F\in\mathcal F$ such that $F\subseteq\dd(f)$, and define $a:W\to S$, $b:W\to S$ by $a:F\mapsto a_F$ and $b:F\mapsto b_F$. Let $\mathcal H\in\Fil W$ be defined as $\mathcal H = a^{-1}(\mathcal F)\wedge b^{-1}(\mathcal F)$.
The functions $a$ and $b$ are total functions, hence admissible. By monotonicity, we have
\begin{align*}
a(\mathcal H)&=a(a^{-1}(\mathcal F)\wedge b^{-1}(\mathcal F))\\
&\leq a(a^{-1}(\mathcal F))\\
&\leq\mathcal F
\end{align*}
and similarly, $b(\mathcal H)\leq\mathcal F$.
Thus, $a/\mathcal H=\alpha$ and $b/\mathcal H=\beta$ are local germs, and we have $\varphi\circ\alpha=\varphi\circ\beta$ because $f\circ a=f\circ b$. However, $\alpha\not=\beta$, for, if $H\in\mathcal H$, then there exist $F_a$, $F_b\in\mathcal F$ such that $a^{-1}(F_a)\cap b^{-1}(F_b)\subseteq H$, and letting $F=F_a\cap F_b$, we have
$a^{-1}(F)\cap b^{-1}(F)\subseteq H$. Then $a_F$ and $b_F\in H$, so $a|_H\not= b|_H$. This proves $\alpha\not=\beta$, and the contrapositive, that $\varphi$ is not monic.
\end{proof}

\subsection{Isomorphisms}
As usual with factorization systems, the isomorphisms in
$\CatFil$ are precisely those arrows contained both in
$\mathbf E^\CatFil$ and in $\mathbf M^\CatFil$. (This follows from
axioms \cite[Section~\zref{I-S:FactSys}, (F3) and~(F4)]{reltgeneq}.) This allows us to characterize
them:

\begin{proposition}
An arrow $\varphi\in\CatFil(\mathcal F,\mathcal
G)$ is an isomorphism in $\CatFil$ iff there is a
partial function $f$ representing $\varphi$ such
that $f$ is one-one and $f(\mathcal F)=\mathcal G$.
\end{proposition}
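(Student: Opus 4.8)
The plan is to read off the result from the factorization system $\pair{\mathbf E^\CatFil}{\mathbf M^\CatFil}$ together with the observation just recorded, that the isomorphisms of $\CatFil$ are exactly the arrows lying in both $\mathbf E^\CatFil$ and $\mathbf M^\CatFil$. Thus an arrow $\varphi\in\CatFil(\mathcal F,\mathcal G)$ is an isomorphism iff $\varphi\in\mathbf E^\CatFil(\mathcal F,\mathcal G)$ and $\varphi\in\mathbf M^\CatFil(\mathcal F,\mathcal G)$, and it remains only to translate each of these two memberships into a statement about representatives and then to combine them into a single representative.

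For the forward direction, suppose $\varphi$ is an isomorphism. Membership in $\mathbf M^\CatFil$ supplies, by the very definition of that subcategory, an admissible representative $f$ of $\varphi$ that is one-one on its domain of definition. Membership in $\mathbf E^\CatFil$ says precisely that $\varphi(\mathcal F)=\mathcal G$. The key point is that the same $f$ witnesses both facts: by the notation following Theorem~\ref{T:AnyRep}, the value $\varphi(\mathcal F)$ equals $f(\mathcal F)$ for \emph{any} representative $f$, so in particular the one-one representative just produced satisfies $f(\mathcal F)=\varphi(\mathcal F)=\mathcal G$. Hence this $f$ is one-one with $f(\mathcal F)=\mathcal G$, as required.

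For the converse, suppose $f$ is a partial function representing $\varphi$ with $f$ one-one and $f(\mathcal F)=\mathcal G$. Since $f$ represents an arrow of $\CatFil(\mathcal F,\mathcal G)$ it is admissible, so that $\dd(f)$, or at least a subset of it, lies in $\mathcal F$; being one-one on its domain, $f$ therefore exhibits $\varphi$ as an element of $\mathbf M^\CatFil$. And since $\varphi(\mathcal F)=f(\mathcal F)=\mathcal G$, we have $\varphi\in\mathbf E^\CatFil$. Therefore $\varphi$ lies in $\mathbf E^\CatFil\cap\mathbf M^\CatFil$ and is an isomorphism.

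The only genuine subtlety, and the step I would be most careful about, is the passage to a single representative in the forward direction: $\mathbf M^\CatFil$ guarantees a one-one representative while $\mathbf E^\CatFil$ guarantees the image condition, but a priori these might be witnessed by different partial functions. This is resolved by the representative-independence of $\varphi(\mathcal F)$ (Theorem~\ref{T:AnyRep} and the notation following it), which lets the one-one representative inherit the equality $f(\mathcal F)=\mathcal G$ for free. One should also note in passing that the one-one representative furnished by $\mathbf M^\CatFil$ is automatically admissible and, being equivalent to a local germ, local as well, so that it is indeed a legitimate representative of $\varphi$ in $\CatFil(\mathcal F,\mathcal G)$. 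An alternative, more computational route would bypass the factorization machinery by exhibiting the two-sided inverse directly as the germ of the partial inverse $f^{-1}$ (well defined on $\rr(f)$ since $f$ is one-one) and verifying $f^{-1}\circ f\equiv_{\mathcal F}1_S$ and $f\circ f^{-1}\equiv_{\mathcal G}1_T$; but the factorization-system argument above is shorter and reuses results already in hand.
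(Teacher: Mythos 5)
Your proposal is correct and follows essentially the same route as the paper, which states the proposition immediately after observing that the isomorphisms of $\CatFil$ are exactly the arrows in $\mathbf E^\CatFil\cap\mathbf M^\CatFil$ and leaves the translation into the stated characterization implicit. Your write-up simply makes that translation explicit, and your handling of the one subtle point — that the one-one representative supplied by $\mathbf M^\CatFil$ inherits $f(\mathcal F)=\mathcal G$ via the representative-independence of $\varphi(\mathcal F)$ — is exactly the detail the paper's terse presentation relies on.
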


\subsection{\texorpdfstring{The partially-ordered sets $\mathcal F/\mathcal M^{\CatFil}$}{MPosetsCatFil}}
Recall \cite[Section~\zref{I-S:FactSys}.\zref{I-S:PosetDefs}]{reltgeneq}  that if we have a factorization system $\pair{\mathbf E}{\mathbf M}$ in a category $\mathbf C$, $c\in\mathbf C$, and arrows $m$ and $m'$ with common codomain $c$, then we say $m\leq m'$ when there is a  diagram
\[\xymatrix{\dom m\ar[ddr]_m \ar@{.>}[rr]^f &&\dom m' \ar[ddl]^{m'.}\\
\\
&c}\]
where $f$ is an arrow making the diagram commutative.  If $f$ is an isomorphism, so that $m\leq m'$ and $m'\leq m$, then we say that $m$ and $m'$ are \emph{equivalent}, or $m\sim m'$. The $\leq$ relation defines a preorder and the $\sim$ relation defines an equivalence relation; we denote the corresponding partially-ordered set of equivalence classes (wrt $\sim$) by $c/\mathbf M$.

\begin{theorem}\mylabel{T:MSubLattice} Let $\mathcal F\in\CatFil$. Then
$\mathcal F/\mathbf M^{\CatFil}$ is isomorphic to the complete lattice $\Fil\mathcal F$.
\end{theorem}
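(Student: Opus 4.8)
The plan is to exhibit an explicit order isomorphism between $\mathcal F/\mathbf M^{\CatFil}$ and $\Fil\mathcal F$, where $\mathcal F\in\Fil S$. In one direction, I send an $\mathbf M^{\CatFil}$-subobject represented by $\mu\in\mathbf M^{\CatFil}(\mathcal G,\mathcal F)$ to its image filter $\Phi([\mu])=\mu(\mathcal G)$; since $\mu$ is local with respect to $\mathcal F$, the Proposition characterizing locality gives $\mu(\mathcal G)\leq\mathcal F$, so $\mu(\mathcal G)\in\Fil\mathcal F$. In the other direction, I send a subfilter $\mathcal F'\leq\mathcal F$ (a filter on the same set $S$, with $\mathcal F\subseteq\mathcal F'$) to $\Psi(\mathcal F')=[1_S/\mathcal F']$, the class of the inclusion germ $1_S/\mathcal F'\colon\mathcal F'\to\mathcal F$. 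This is a legitimate $\mathbf M^{\CatFil}$-arrow because $1_S$ is one-one and is local with respect to $\mathcal F$ exactly when $\mathcal F\subseteq\mathcal F'$, i.e.\ when $\mathcal F'\leq\mathcal F$.

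Next I would check that $\Phi$ is well defined on $\sim$-classes. By Theorem~\ref{T:AnyRep}(1) the value $\mu(\mathcal G)$ does not depend on the chosen representing partial function. If $\mu'\circ\phi=\mu$ for an isomorphism $\phi\colon\mathcal G\to\mathcal G'$, then $\phi(\mathcal G)=\mathcal G'$ because every isomorphism lies in $\mathbf E^{\CatFil}$ (as noted in the subsection on isomorphisms), whence, by functoriality of the image, $\mu(\mathcal G)=\mu'(\phi(\mathcal G))=\mu'(\mathcal G')$; so $\Phi$ is constant on $\sim$-classes. The composite $\Phi\circ\Psi$ is the identity of $\Fil\mathcal F$ since $1_S(\mathcal F')=\mathcal F'$.

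The crux, and the main obstacle, is showing $\Psi\circ\Phi$ is the identity, i.e.\ that every $\mu\in\mathbf M^{\CatFil}(\mathcal G,\mathcal F)$ is $\sim$-equivalent to the inclusion of its image $\mathcal F'=\mu(\mathcal G)$. Here I would invoke the isomorphism criterion (the Proposition immediately preceding this theorem). Choose a representative $m$ of $\mu$ that is one-one on $\dd(m)\in\mathcal G$. Viewed as a germ $m/\mathcal G\colon\mathcal G\to\mathcal F'$, it is local with respect to $\mathcal F'=m(\mathcal G)$ (since $m(\mathcal G)\leq\mathcal F'$ holds with equality) and satisfies $m(\mathcal G)=\mathcal F'$; as $m$ is also one-one, the criterion shows $m/\mathcal G$ is an isomorphism from $\mathcal G$ to $\mathcal F'$. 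Because $1_S\circ m=m$, we get $(1_S/\mathcal F')\circ(m/\mathcal G)=\mu$, so $\mu$ factors through the inclusion $1_S/\mathcal F'$ via an isomorphism, giving $\mu\sim 1_S/\mathcal F'$ and hence $[\mu]=\Psi(\mathcal F')=\Psi(\Phi([\mu]))$. Producing this comparison isomorphism and verifying it is a genuine arrow of $\CatFil$ is where the real content sits.

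Finally I would verify that both maps preserve order. If $\mu=\mu'\circ\phi$ witnesses $[\mu]\leq[\mu']$ in $\mathcal F/\mathbf M^{\CatFil}$, then $\phi(\mathcal G)\leq\mathcal G'$ (as $\phi$ is local with respect to $\mathcal G'$), so by monotonicity of the image $\mu(\mathcal G)=\mu'(\phi(\mathcal G))\leq\mu'(\mathcal G')$; thus $\Phi$ is monotone. Conversely, if $\mathcal F'\leq\mathcal F''$ with both $\leq\mathcal F$, then $1_S/\mathcal F'\colon\mathcal F'\to\mathcal F''$ is a legitimate arrow (here $\mathcal F''\subseteq\mathcal F'$ makes $1_S$ local with respect to $\mathcal F''$) satisfying $(1_S/\mathcal F'')\circ(1_S/\mathcal F')=1_S/\mathcal F'$, witnessing $\Psi(\mathcal F')\leq\Psi(\mathcal F'')$; thus $\Psi$ is monotone. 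Hence $\Phi$ and $\Psi$ are mutually inverse monotone bijections, i.e.\ an isomorphism of posets. Since the Proposition in the subsection ``$\Fil S$ is coalgebraic'' shows $\Fil\mathcal F$ is a coalgebraic complete lattice, it follows that $\mathcal F/\mathbf M^{\CatFil}$ is isomorphic to that complete lattice.
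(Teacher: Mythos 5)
Your proof is correct, and it shares with the paper the central map $[\mu]\mapsto\mu(\mathcal G)$ together with the same well-definedness argument (an isomorphism lies in $\mathbf E^{\CatFil}$, hence preserves the image filter). The crux, however, is handled by a genuinely different route. The paper never writes down an explicit inverse: given two arbitrary monics with $\mu(\mathcal G)\leq\mu'(\mathcal G')$, it directly manufactures the comparison arrow as the germ of $(m')^{-1}\circ m$ and verifies by hand that this partial function is admissible and local; injectivity on $\sim$-classes and order-reflection both fall out of that single construction. You instead normalize: using the isomorphism criterion you show that every $\mu\in\mathbf M^{\CatFil}(\mathcal G,\mathcal F)$ is $\sim$-equivalent to the canonical inclusion $1_S/\mu(\mathcal G)$, after which the inverse map $\Psi$ and all order comparisons reduce to identity germs between subfilters of $\mathcal F$, where admissibility and locality are immediate from $\mathcal F''\subseteq\mathcal F'$. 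Your route buys a cleaner argument --- the only nontrivial locality verification is absorbed into the already-proved characterization of isomorphisms, and you make surjectivity explicit where the paper leaves it implicit --- at the cost of leaning on that Proposition; the paper's route is more self-contained at this point and exhibits the witnessing diagonal arrow for an arbitrary pair of monics, which is the form in which the statement is actually invoked later (e.g.\ in the construction of pullbacks of $\mathbf M^{\CatFil}$-tuples). Both arguments correctly conclude by citing the earlier result that $\Fil\mathcal F$ is a complete lattice.
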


\begin{proof} The elements of $\mathcal F/\mathbf M^\CatFil$ are equivalence classes of arrows of $\CatFil$ with codomain $\mathcal F$.  Given a germ $\mu:\mathcal G\to\mathcal F$, we map $\mu$ to $\mu(\mathcal G)\in\Fil\mathcal F$.
If we have a diagram
\begin{equation}\mylabel{Diag:SimDiag}\xymatrix{\mathcal G\ar[ddr]_\mu \ar@{.>}[rr]^\varphi &&\mathcal G' \ar[ddl]^{\mu'.}\\
\\
&\mathcal F}\end{equation}
where $\varphi$ is an isomorphism, 
then because $\mu=\mu'\circ\varphi$, $\mu(\mathcal G)=\mu'(\varphi(\mathcal G))$. Now, $\varphi$, being an isomorphism, is an arrow of $\mathbf E^{\CatFil}$, and by definition, this means that $\varphi(\mathcal G)=\mathcal G'$. Thus, $\mu(\mathcal G)=\mu'(\mathcal G')$. In other words, $\mu\sim\mu'$ implies $\mu(\mathcal G)=\mu'(\mathcal G')$. Thus we have defined a mapping $Z:\mathcal F/\mathbf M^{\CatFil}\to\Fil F$, which takes the equivalence class $[\mu]$ of an arrow $\mu:\mathcal G\to\mathcal F$ to $\mu(\mathcal G)$.

Suppose now that we have the diagram \ref{Diag:SimDiag}, absent  $\varphi$, but knowing that $\mu(\mathcal G)\leq\mu'(\mathcal G')$ (in the lattice $\Fil\mathcal F$), and we will construct an arrow $\varphi$ witnessing $\mu\leq \mu'$. Let $m:G\to S$, $m':G'\to S$ be admissible one-one partial functions representing the germs $\mu$ and $\mu'$, respectively.

Let $f$ be the partial function $(m')^{-1}\circ m$. We will show that $f$ is a local, admissible partial function, such that $\varphi=f/\mathcal G$ completes Diagram~\ref{Diag:SimDiag}.

Let $K'\in\mathcal G'$. We have $K'\cap G'\in\mathcal G'$ since $G'\in\mathcal G'$. Then $m'(K'\cap G')\in m'(\mathcal G')$. Since $m(\mathcal G)\leq m'(\mathcal G')$ by assumption, there is an $L\in m(\mathcal G)$ such that $L\subseteq m'(K'\cap G')$. There is a $K\in\mathcal G$ such that $K\subseteq G=\dd(m)$  and $K\subseteq L$. Then, $K\subseteq \dd(f)$ and $f(K)\subseteq K'$. So, $f$ is local.

Certainly, $m|_K=m'\circ f|_K$, implying that $\mu=\mu'\circ\varphi$.
\end{proof}

\section{\texorpdfstring{Properties of the Category $\CatFil$}{PropertiesCatFil}}\mylabel{S:CatFilProps}

In this section, we will verify that the category $\CatFil$ satisfies the basic properties in the list \cite[\zref{I-A:Basic}]{reltgeneq}. (A number of subsequent theorems in \cite{reltgeneq} will follow.)

\begin{theorem} $\mathbf M^\CatFil$ is well-powered.
\end{theorem}

\begin{proof} If $\mathcal F\in\Fil S$, then by Theorem~\ref{T:MSubLattice}, $\mathcal F/\mathbf M^\CatFil\cong\Fil\mathcal F$, which is a small set because $S$ is a small set.
\end{proof}

\begin{theorem}\mylabel{T:FilHasFiniteLimits} $\CatFil$ has limits of all finite diagrams.
\end{theorem}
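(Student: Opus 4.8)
The plan is to establish that $\CatFil$ has finite limits by verifying it has a terminal object and all binary products and equalizers, since these generate all finite limits. The terminal object is easy: the filter $\mathcal F=\{\nullset\}$ on the empty set $S=\nullset$ should work, since for any $\mathcal G$ there is exactly one germ $\mathcal G\to\{\nullset\}$ (the germ of the unique partial function into the empty codomain, which is admissible because its domain of definition can be taken empty, and trivially local). I would verify the universal property directly from the definition of $\CatFil(\mathcal G,\{\nullset\})$.

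For binary products, given $\mathcal F\in\Fil S$ and $\mathcal G\in\Fil T$, the natural candidate is the product filter on $S\times T$ generated by the base $\{\,F\times G\mid F\in\mathcal F,\ G\in\mathcal G\,\}$, which I would call $\mathcal F\times\mathcal G$, equipped with the germs of the coordinate projections $\pi_1\colon S\times T\to S$ and $\pi_2\colon S\times T\to T$. I would first check these projections are admissible (their domains of definition can be all of $S\times T$, and $S\times T = S\times T \in \mathcal F\times\mathcal G$) and local (for $F\in\mathcal F$, $\pi_1^{-1}(F)=F\times T$ lies in the product filter). Given a pair of germs $\varphi\colon\mathcal H\to\mathcal F$ and $\psi\colon\mathcal H\to\mathcal G$ represented by admissible local partial functions $f,g$, I would form the pairing $\langle f,g\rangle$ on the domain $\dd(f)\cap\dd(g)$ (which lies in $\mathcal H$ by admissibility and the filter axioms), verify it is local with respect to $\mathcal F\times\mathcal G$, and confirm that its germ is the unique arrow making the projection triangles commute. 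Uniqueness follows because two germs agreeing after composition with both projections must agree on a common set of the product filter.

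For equalizers of a parallel pair $\varphi,\psi\colon\mathcal F\to\mathcal G$, represented by admissible local $f,g\colon S\to T$, the candidate is the subfilter $\mathcal E\leq\mathcal F$ obtained as $\mathcal E=\Fg(\mathcal F\cup\{\,E\,\})$ where $E=\{\,s\in\dd(f)\cap\dd(g)\mid f(s)=g(s)\,\}$, together with the inclusion germ $1_S/\mathcal E$ (an arrow in $\mathbf M^\CatFil$, by Theorem~\ref{T:MSubLattice} these $\mathbf M$-subobjects are exactly the subfilters). The universal property should say: a germ $\chi\colon\mathcal H\to\mathcal F$ factors through $\mathcal E$ iff $\varphi\circ\chi=\psi\circ\chi$, and the factorization is unique. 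The forward direction and uniqueness are routine; the substance is that if $\varphi\chi=\psi\chi$ then any representative of $\chi$ maps a set of $\mathcal H$ into $E$ up to $\equiv$, so that $\chi$ lands in $\mathcal E$.

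\emph{The main obstacle} I anticipate is the equalizer construction in the presence of filters containing the empty set and the passage between partial functions and germs. Specifically, the set $E$ on which $f$ and $g$ agree depends on the chosen representatives, and I must check, using Theorem~\ref{T:AnyRep} and the equivalence $f\equiv_{\mathcal F}g$ machinery, that the subfilter $\mathcal E$ is independent of representatives and that ``factors through $\mathcal E$'' is exactly equivalent to $\varphi\circ\chi=\psi\circ\chi$ at the level of germs rather than partial functions. Handling locality of the factoring germ, and confirming that $\mathcal E$ is a genuine subfilter (nonempty sets issue when $E$ might not lie in $\mathcal F$), is where care is needed; once equalizers and finite products are in hand, the existence of all finite limits is standard.
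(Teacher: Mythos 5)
Your decomposition (terminal object, binary products, equalizers) and your constructions of products and equalizers are essentially the paper's own proof, which reduces to equalizers plus products of finite tuples, takes the product filter $\bigwedge_i\pi_i^{-1}(\mathcal F_i)$ (the filter generated by the rectangles $F\times G$, exactly your $\mathcal F\times\mathcal G$), and takes for the equalizer the subfilter generated by the agreement sets $[a=b]$. One stylistic difference: the paper generates the equalizing filter from \emph{all} agreement sets $\{\,[a=b]\mid a/\mathcal F=\alpha,\ b/\mathcal F=\beta\,\}$ over all pairs of representatives, which makes independence of representatives automatic, whereas you fix one pair $\pair fg$ and must check independence separately (your check does go through, by the argument you sketch via the $\equiv_{\mathcal F}$ machinery, since any two choices of agreement set coincide after intersecting with a suitable member of $\mathcal F$).

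However, there is a genuine error: your claimed terminal object is wrong. The filter $\{\nullset\}$ on the empty set is \emph{initial}, not terminal, in $\CatFil$. The only partial function from a set $T$ into $\nullset$ is the empty partial function, whose domain of definition is $\nullset$; admissibility with respect to $\mathcal G\in\Fil T$ requires $\dd(f)\in\mathcal G$, i.e.\ $\nullset\in\mathcal G$. So for every \emph{proper} filter $\mathcal G$ (for instance any filter of the form $\{\,S\,\}$ with $S$ nonempty) the hom-set $\CatFil(\mathcal G,\{\nullset\})$ is empty, and the universal property fails. Your parenthetical justification --- that the empty function ``is admissible because its domain of definition can be taken empty'' --- conflates admissibility with mere existence of a domain: admissibility demands that the domain lie in the source filter. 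The correct terminal object is the one-point filter $u=\{\,\{\,0\,\}\,\}$ of Notation~\ref{N:unit} (equivalently, the empty product in the paper's product construction): every filter $\mathcal G$ admits the total constant function to $\{\,0\,\}$, which is admissible and trivially local, and all admissible partial functions into $u$ have the same germ. With this replacement the rest of your argument stands.
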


\begin{proof} It suffices to show that there are equalizers, and products of finite tuples of objects.

(Equalizers): Suppose $\alpha$, $\beta:\mathcal F\to\mathcal G$, where $\mathcal F\in\Fil S$ and $\mathcal G\in\Fil T$.  Let $\pair{\mathcal H}\mu$ be the equalizer and its arrow to $\mathcal F$, if they exist, in the diagram
\[\xymatrix{\mathcal H\ar[r]^\mu&\mathcal F\ar@<.5ex>[r]^\alpha
\ar@<-.5ex>[r]_\beta&\mathcal G,}\]
where we know that $\mu$ needs to be monic because that is always the case for an equalizer, and
from our analysis of the factorization system $\pair{\mathbf E^\CatFil}{\mathbf M^\CatFil}$, that if the equalizer exists, we can choose $\mathcal H\in\Fil\mathcal F$ and $\mu$ the germ with respect to $\mathcal H$ of the identity function $1_S$. Also, we know that if $\nullset\in\mathcal G$, then $S=\nullset$ and $\mu$ is an isomorphism, but if not, then by Remark~\ref{R:TotalFunction}, there are total functions $m$, $a$, and $b$ representing $\mu$, $\alpha$, and $\beta$ respectively.

We choose
\[\mathcal H=\Fg\{\,[a=b]\mid a/\mathcal F=\alpha\text{ and }b/\mathcal F=\beta\,\},\]
where $[a=b]$ stands for the set of $x\in S$ on which the partial functions $a$ and $b$ are defined and $a(x)=b(x)$. We have $\mathcal H\leq\mathcal F$, because if $F\in\mathcal F$, $a/\mathcal F=\alpha$, and $b/\mathcal F=\beta$, then $[a|_F=b|_F]\subseteq F$.

Suppose now that $\mathcal K$ is a filter, and $\gamma\in\CatFil(\mathcal K,\mathcal F)$ is such that $\alpha\circ\gamma=\beta\circ\gamma$. Let $a$, $b$, and $g$ be admissible partial functions representing $\alpha$, $\beta$, and $\gamma$ respectively.  We have $[a\circ g =b\circ g]=K\in\mathcal K$. Let $h:K\to S$ be defined as $h:x\mapsto g(x)$. Clearly $\alpha\circ (h/\mathcal K)=\beta\circ(h/\mathcal K)$.
We must show that $h/\mathcal K$ is local. If $F\in\mathcal F$, then $F\supseteq [a|_F=b|_F]\in\mathcal H$. Let $K'=[(a|_F)g=(b|_F)g]$; we have $K'\in\mathcal K$ and $g(K')\subseteq F$.

The germ $h/\mathcal K$ is unique, because, $\mu$ being one-one on its domain of definition, $\mu\in\mathbf M^\CatFil$, and by Theorem~\ref{T:CatFilFactSys}, then, $\mu$ is monic.

(Products of finite tuples of filters):
Given a tuple of filters $\mathcal F_i$, on sets $S_i$, let $S=\Pi_iS_i$, and let $\mathcal P=\bigwedge_i\pi_i^{-1}(\mathcal F_i)$, where the $\pi_i:S\to S_i$ are the projections. We claim that if the tuple $\mathcal F_i$ is finite, then $\mathcal P$ is the product, with product cone the germs $\pi_i/\mathcal P$.

Given a filter $\mathcal G$ (on a set $G$), and germs $\varphi_i:\mathcal G\to\mathcal F_i$, let $f_i:X_i\to S_i$ be a partial function representing $\varphi_i$, for each $i$, where $X_i\in\mathcal G$. We define $X=\bigcap_iX_i\in\mathcal G$ (using the fact that the index set is finite) and we can define $f:X\to S$, using the universal property of the product. It is clear that for each $i$, $(\pi_i/\mathcal P)\circ(f/\mathcal G)= \varphi_i$. 
\end{proof}

\begin{theorem}
$\CatFil$ has pullbacks of small tuples of arrows in $\mathbf M^\CatFil$.
\end{theorem}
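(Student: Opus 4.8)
The plan is to reduce the construction entirely to the lattice structure on $\Fil\mathcal F$, exploiting the identification of $\mathbf M^\CatFil$-subobjects with filters furnished by Theorem~\ref{T:MSubLattice}. Let $(\mu_i\colon\mathcal G_i\to\mathcal F)_{i\in I}$ be a small tuple of arrows of $\mathbf M^\CatFil$ with common codomain $\mathcal F\in\Fil S$. The apex of the pullback cone will be the meet, in the complete lattice $\Fil\mathcal F$, of the subobjects determined by the $\mu_i$. If $I=\emptyset$ the wide pullback is just the terminal object, which exists by Theorem~\ref{T:FilHasFiniteLimits}, so I may assume $I\neq\emptyset$.

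First I would set $\mathcal F_i'=\mu_i(\mathcal G_i)\in\Fil\mathcal F$, the filter corresponding to the subobject $[\mu_i]$ under the isomorphism $\mathcal F/\mathbf M^\CatFil\cong\Fil\mathcal F$ of Theorem~\ref{T:MSubLattice}, and put $\mathcal P=\bigwedge_i\mathcal F_i'$, which exists because $\Fil\mathcal F$ is complete. Let $\iota=1_S/\mathcal P\colon\mathcal P\to\mathcal F$ be the inclusion germ; since $\mathcal P\leq\mathcal F$ it is admissible, local, and one-one, hence lies in $\mathbf M^\CatFil$, and $\iota(\mathcal P)=\mathcal P$. Because $\mathcal P\leq\mathcal F_i'$ in $\Fil\mathcal F$ for each $i$, the isomorphism of Theorem~\ref{T:MSubLattice}, concretely the diagonal-filling construction in its proof, yields germs $\nu_i\colon\mathcal P\to\mathcal G_i$ with $\mu_i\circ\nu_i=\iota$. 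This produces a cone $(\nu_i)_{i\in I}$ over the tuple whose common composite into $\mathcal F$ is $\iota$.

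To prove universality, suppose $\mathcal K$ carries germs $\kappa_i\colon\mathcal K\to\mathcal G_i$ with $\mu_i\circ\kappa_i=\mu_j\circ\kappa_j$ for all $i,j$, and write $\lambda\colon\mathcal K\to\mathcal F$ for this common composite. Since each $\kappa_i$ is local, $\kappa_i(\mathcal K)\leq\mathcal G_i$, and the functoriality of pushforward (Theorem~\ref{T:GaloisOne}(2), applied to compatibly chosen representatives of $\mu_i$ and $\kappa_i$) gives $\lambda(\mathcal K)=\mu_i(\kappa_i(\mathcal K))\leq\mu_i(\mathcal G_i)=\mathcal F_i'$ by monotonicity; taking the meet over $i$ yields $\lambda(\mathcal K)\leq\mathcal P$. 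Hence any representative of $\lambda$ is already local with respect to $\mathcal P$, so it defines a germ $\theta\colon\mathcal K\to\mathcal P$ with $\iota\circ\theta=\lambda$. From $\mu_i\circ\nu_i\circ\theta=\iota\circ\theta=\lambda=\mu_i\circ\kappa_i$ and the monicity of $\mu_i$ I conclude $\nu_i\circ\theta=\kappa_i$ for every $i$; and since $\iota$ is monic, $\theta$ is unique, as any competing $\theta'$ with $\nu_{i_0}\circ\theta'=\kappa_{i_0}$ for a chosen $i_0$ satisfies $\iota\circ\theta'=\mu_{i_0}\circ\kappa_{i_0}=\iota\circ\theta$.

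The main obstacle I anticipate is the bookkeeping around germs versus partial functions in the universality step: one must verify that the functoriality identity $\lambda(\mathcal K)=\mu_i(\kappa_i(\mathcal K))$ survives the passage from partial functions to germs, so that representatives can be chosen so the pushforwards compose correctly, and that the inequality $\lambda(\mathcal K)\leq\mathcal P$ genuinely forces a representative of $\lambda$ to be local with respect to $\mathcal P$. It is precisely this locality that allows $\theta$ to be \emph{defined} as an arrow of $\CatFil$ rather than merely a partial function. I note that the completeness of $\Fil\mathcal F$ is what makes the \emph{small}, as opposed to merely finite, case go through, since the apex $\mathcal P$ is an arbitrary meet in that lattice; the corresponding limitation to finite products in Theorem~\ref{T:FilHasFiniteLimits} does not interfere here because the projections $\mu_i$ are monic.
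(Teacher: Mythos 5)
Your proposal is correct and takes essentially the same route as the paper's own proof: both use Theorem~\ref{T:MSubLattice} to identify the arrows $\mu_i$ with the subfilters $\mu_i(\mathcal G_i)\in\Fil\mathcal F$, take their meet in that complete lattice as the apex, and obtain the cone arrows $\nu_i$ from the diagonal construction in that theorem's proof, with the common composite being the germ of the identity. The only substantive difference is that you explicitly verify the universal property (factoring an arbitrary cone through the meet and using monicity of $\iota$ and the $\mu_i$ for uniqueness), a step the paper's proof leaves implicit.
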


\begin{proof} 
Let $\mathcal F\in\Fil S$,  and for each $i$ in some small index set which we take to be an ordinal without loss of generality (as long as we assume the Axiom of Choice), let $\mathcal G_i\in\CatFil$ and $\gamma_i\in\mathbf M^\CatFil(\mathcal G_i,\mathcal F)$. We will prove that the diagram
\begin{equation}\mylabel{Diag:PullbackDiagram}
\xymatrix{
\mathcal G_0\ar[ddr]_{\gamma_0} & \mathcal G_1\ar[dd]_{\gamma_1} & \mathcal G_2 \ar[ddl]_{\gamma_2} & \\
&& \cdots\\
&\mathcal F
}\end{equation}
 has a pullback.
 
By Theorem~\ref{T:MSubLattice} and because the lattice $\Fil\mathcal F$ is complete, the $\mathbf M^\CatFil$-subobjects $\gamma_i/\mathbf M^\CatFil$ have a meet, $\mathcal H$. Let $\eta:\mathcal H\to\mathcal F$ be the germ $1_S/\mathcal H$. By Theorem~\ref{T:MSubLattice}, we can then draw the pullback diagram
\begin{equation}
\mylabel{Diag:PullbackInSet}
\xymatrix{
& \mathcal H\ar@{.>}[ddl]_{\varphi_0} \ar@{.>}[dd]_{\varphi_1} \ar@{.>}[ddr]_{\varphi_2}\\
&&\cdots\\
\mathcal G_0\ar[ddr]_{\gamma_0} & \mathcal G_1\ar[dd]_{\gamma_1} & \mathcal G_2 \ar[ddl]_{\gamma_2} & \\
&& \cdots\\
&\bigcup\mathcal F
}\end{equation}
where for all $i$, $\gamma_i\circ\varphi_i=1_S/\mathcal H$, with the $\varphi_i$ coming from Theorem~\ref{T:MSubLattice} and the fact that for each $i$, $\mathcal H\leq\mathcal G_i$.
\end{proof}

\begin{theorem}
$\CatFil$ has coproducts of all small tuples of arrows.
\end{theorem}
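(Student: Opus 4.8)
The plan is to realize the coproduct as a disjoint union of underlying sets carrying the join of the pushed‑forward filters, dually to the product construction (which used a meet of pulled‑back filters). Given a small family of objects $\mathcal F_i\in\Fil S_i$, I would put $S=\coprod_i S_i$, write $\iota_i\colon S_i\to S$ for the (total, injective) insertions, identify each $S_i$ with its image $\iota_i(S_i)\subseteq S$ so that $\iota_i^{-1}(C)=C\cap S_i$, and define the candidate coproduct
\[\mathcal C=\{\,C\subseteq S\mid C\cap S_i\in\mathcal F_i\text{ for all }i\,\}.\]
(Equivalently $\mathcal C=\bigcap_i\iota_i(\mathcal F_i)$, which is the join $\bigvee_i\iota_i(\mathcal F_i)$ in $\Fil S$.) First I would check $\mathcal C\in\Fil S$: this is routine, since $S\cap S_i=S_i\in\mathcal F_i$, intersecting with $S_i$ commutes with finite intersection, and it preserves upward inclusion, so the three filter axioms transfer from the $\mathcal F_i$.

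Next I would verify that each $\iota_i$ is an arrow $\mathcal F_i\to\mathcal C$ of $\CatFil$, thereby supplying the coproduct cone. It is total, hence admissible with respect to $\mathcal F_i$ because $\dd(\iota_i)=S_i\in\mathcal F_i$; and it is local with respect to $\mathcal C$ because for every $C\in\mathcal C$ we have $\iota_i^{-1}(C)=C\cap S_i\in\mathcal F_i$, directly by the definition of $\mathcal C$. The insertions are then the germs $\iota_i/\mathcal F_i\in\CatFil(\mathcal F_i,\mathcal C)$.

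For existence in the universal property, given $\mathcal G\in\Fil T$ and germs $\varphi_i\colon\mathcal F_i\to\mathcal G$, I would choose representing partial functions $f_i\colon S_i\to T$ (admissible wrt $\mathcal F_i$, local wrt $\mathcal G$) and copair them into a single partial function $f\colon S\to T$ with $f|_{S_i}=f_i$, which is unambiguous as the $S_i$ are disjoint. Since $\dd(f)\cap S_i=\dd(f_i)\in\mathcal F_i$ and $f^{-1}(G)\cap S_i=f_i^{-1}(G)\in\mathcal F_i$ for every $G\in\mathcal G$, both $\dd(f)$ and each $f^{-1}(G)$ lie in $\mathcal C$; thus $f$ is admissible wrt $\mathcal C$ and local wrt $\mathcal G$, and $\psi=f/\mathcal C$ is an arrow $\mathcal C\to\mathcal G$. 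Because $f\circ\iota_i=f_i$ as partial functions (the composite has domain $\iota_i^{-1}(\dd f)=\dd(f_i)$ and agrees with $f_i$ there), we get $\psi\circ(\iota_i/\mathcal F_i)=\varphi_i$.

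The one step needing care — and the main obstacle — is \emph{uniqueness} for an arbitrary (possibly infinite) small index set. If $\psi'=f'/\mathcal C$ also satisfies $\psi'\circ(\iota_i/\mathcal F_i)=\varphi_i$, then for each $i$ the partial functions $f'|_{S_i}$ and $f|_{S_i}=f_i$ are $\equiv_{\mathcal F_i}$-equivalent, so there is $F_i\in\mathcal F_i$ witnessing their germ agreement. The key point is that these local agreement sets glue: the set $\coprod_i F_i$ meets each $S_i$ exactly in $F_i\in\mathcal F_i$, hence lies in $\mathcal C$, and on it $f$ and $f'$ have equal restricted domains and equal values, so $f\equiv_{\mathcal C}f'$ and $\psi=\psi'$. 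This gluing needs no cardinality restriction, which is precisely why coproducts exist for \emph{all} small tuples, in contrast to the product construction, where one intersects the domains $X_i$ and only a finite intersection is guaranteed to remain in the ambient filter.
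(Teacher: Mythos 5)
Your proposal is correct and follows essentially the same route as the paper: the coproduct object is the join $\bigvee_i \iota_i(\mathcal F_i)$ on the disjoint union (your ``intersect with each $S_i$'' description is the same filter), the cone consists of germs of the insertions, existence comes from copairing representatives, and uniqueness from gluing the per-component agreement sets $F_i$ into a single member of $\mathcal C$. The only difference is expository: you verify admissibility and locality via the $C\cap S_i$ criterion rather than via base sets $\bigcup_i j_i(F_i')$, and you make explicit the contrast with the finite-product construction, both of which match the paper's argument in substance.
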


\begin{proof}
Let $\mathcal F_i\in\Fil S_i$, indexed by a small set which, without loss of generality, we take to be an ordinal number.
Let $Z$ be the disjoint union of the sets $S_i$, and for each $i$, let $j_i$ be the insertion of $S_i$ into $Z$. Then, let $\mathcal \bigvee_i j_i(\mathcal F_i)$ be the  join over $i$ of the filters $j_i(\mathcal F_i)$. The join $\mathcal C=\bigvee_i j_i(\mathcal F)$ is the filter of subsets $C\subseteq Z$ such that for all $i$, there is an $F_i\in\mathcal F_i$ with $j_i(F_i)\subseteq C$. Let $\iota_i= j_i/\mathcal F_i$ for each $i$.

Thus we have the diagram
\[\xymatrix{
\mathcal F_0\ar[ddr]_{\iota_0} & \mathcal F_1\ar[dd]_{\iota_1} & \mathcal F_2 \ar[ddl]_{\iota_2} & \\
&& \cdots;\\
&\mathcal C
}\]
in the category $\CatFil$, and we will show that $\mathcal C$ is a coproduct of the tuple of $\mathcal F_i$.

Suppose we are given a filter $\mathcal X\in\Fil W$, and a cocone of germs $\xi_i:\mathcal F_i\to\mathcal X$, as shown in the following diagram, and we will construct and prove uniqueness of the dotted arrow $\lambda$
\[\xymatrix{
\mathcal F_0\ar[ddr]_{\iota_0}\ar@/_/[ddddr]_{\xi_0} & \mathcal F_1\ar[dd]_{\iota_1}\ar@/^12pt/[dddd]^{\xi_1} & \mathcal F_2 \ar[ddl]_{\iota_2}\ar@/^/[ddddl]^{\xi_2} \\
&& \\
&\mathcal C\ar@{.>}[dd]_\lambda & \qquad\cdots;\\
\\
&\mathcal X
}\]
such that for all $i$, $\xi_i=\lambda\circ\iota_i$, proving the universal property.

For each $i$, let $x_i:F_i\to W$ be a partial function representing $\xi_i$. Let $C=\bigcup_i F_i\subseteq Z$. Let $\ell:C\to W$ be the arrow given by the universal property of the disjoint union (in $\Set$). Then let $\lambda=\ell/\mathcal C$.
$\ell$ is an admissible partial function; we must show that it is local, that its germ $\lambda$ satisfies $\xi_i=\lambda\circ\iota_i$ for all $i$, and that if $\lambda'$ is any germ satisfying those equations, $\lambda'=\lambda$.

Each partial function $x_i$ is local, which means that if $X\in\mathcal X$, there is an $F'_i\in\mathcal F_i$ such that $x_i(F'_i)\subseteq X$. It follows that $\ell'$, defined as the restriction of $\ell$ to $C'=\bigcup_i j_i(F'_i)$, is local.

Now suppose that we have any arrow $\lambda':\mathcal C\to\mathcal X$ such that for all $i$, $\xi_i=\lambda'\circ\iota_i$. Let $\ell':C'\to W$ be an admissible, local partial function representing $\lambda'$. For each $i$, let $F''_i$ be the subset of $S_i$ such that $\ell_i\circ j_i=\ell'\circ j_i$ on $F''_i$.  We know that $F''_i\in\mathcal F_i$ because $\lambda\circ\iota_i=\xi_i=\lambda'\circ\iota_i$. Then $C''\in\mathcal C$ where $C''=\bigcup_i j_i(F''_i)$, and $\ell=\ell'$ on $C'$. This shows that $\lambda=\ell/\mathcal C=\ell'/\mathcal C=\lambda'$.
\end{proof}

\begin{theorem}
$\mathbf M^{\CatFil}$ consists of monic arrows of $\CatFil$.
\end{theorem}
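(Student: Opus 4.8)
The plan is to observe that this statement is nothing new: it is precisely the ``monic'' half of Theorem~\ref{T:CatFilFactSys}, one paragraph of whose proof already verifies that every $\varphi\in\mathbf M^\CatFil$ is a monic arrow of $\CatFil$. So the shortest route is simply to cite Theorem~\ref{T:CatFilFactSys}, where the factorization system $\pair{\mathbf E^\CatFil}{\mathbf M^\CatFil}$ was established together with the fact that $\mathbf M^\CatFil$ consists of monic arrows. In the present ``Properties'' section the point is merely to record this item from the checklist of basic properties, so no further work is strictly required.

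Were a self-contained argument preferred, I would reproduce the relevant paragraph of that proof. First I would take an arbitrary $\varphi\in\mathbf M^\CatFil(\mathcal F,\mathcal G)$ and fix a representative $f$ that is one-one on its domain of definition $\dd(f)=F\in\mathcal F$. To test monicity I would suppose $\alpha,\beta:\mathcal H\to\mathcal F$ satisfy $\varphi\circ\alpha=\varphi\circ\beta$ and aim to conclude $\alpha=\beta$. The key step is to choose representatives $a$ and $b$ of $\alpha$ and $\beta$ sharing a common domain $H\in\mathcal H$, arranged (using locality of $\alpha$ and $\beta$) so that $a(H),b(H)\subseteq F$, and shrunk so that $f\circ a=f\circ b$ as genuine partial functions. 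Once this is set up, injectivity of $f$ on $F$ forces $a=b$ on $H$, whence $\alpha=\beta$.

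The only real obstacle is the bookkeeping needed to pass from the germ-level hypothesis $\varphi\circ\alpha=\varphi\circ\beta$ to an honest equality $f\circ a=f\circ b$ on some set in $\mathcal H$; this is handled by the preservation of $\equiv_{\mathcal H}$ under composition and restriction, so that $f\circ a\equiv_{\mathcal H}f\circ b$ can be upgraded to equality after restricting to a suitable member of $\mathcal H$. Everything else is immediate, and I expect the whole verification to collapse to the citation of Theorem~\ref{T:CatFilFactSys}.
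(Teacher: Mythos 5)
Your proposal is correct and matches the paper's own proof, which likewise disposes of the statement by citing Theorem~\ref{T:CatFilFactSys} (the paper additionally cites Theorem~\ref{T:CatFilFacts} to record the stronger fact that monic arrows are exactly the arrows of $\mathbf M^\CatFil$). Your optional self-contained argument is also a faithful reproduction of the monicity paragraph in the proof of Theorem~\ref{T:CatFilFactSys}, so nothing further is needed.
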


\begin{proof}
More than that, by Theorems \ref{T:CatFilFactSys} and~\ref{T:CatFilFacts}, arrows in $\CatFil$ are monic iff they belong to $\mathbf M^\CatFil$.
\end{proof}

\begin{theorem}
$\mathbf E^\CatFil$ is stable under pullbacks along arrows of $\CatFil$.
\end{theorem}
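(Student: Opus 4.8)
The statement to prove is that $\mathbf E^\CatFil$ is pullback-stable: given a pullback square whose bottom edge is $\epsilon\colon\mathcal F\to\mathcal G$, whose right edge is an arbitrary arrow $\psi\colon\mathcal H\to\mathcal G$, and whose projections are $\psi'\colon\mathcal P\to\mathcal F$ and $\epsilon'\colon\mathcal P\to\mathcal H$, one must show that $\epsilon\in\mathbf E^\CatFil$ (i.e. $\epsilon(\mathcal F)=\mathcal G$) forces $\epsilon'\in\mathbf E^\CatFil$ (i.e. $\epsilon'(\mathcal P)=\mathcal H$). Write $\mathcal F\in\Fil S$, $\mathcal G\in\Fil T$, $\mathcal H\in\Fil U$. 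Since $\CatFil$ has all finite limits by Theorem~\ref{T:FilHasFiniteLimits}, the pullback $\mathcal P$ exists; the plan is to produce it explicitly and then read off the value of $\epsilon'(\mathcal P)$.

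First I would build the pullback by hand. Choosing representatives $f$ of $\epsilon$ and $h$ of $\psi$, I set
\[
P=\{\,(s,u)\in S\times U\mid s\in\dd(f),\ u\in\dd(h),\ f(s)=h(u)\,\},
\]
with projections $p\colon P\to S$ and $q\colon P\to U$, and define
\[
\mathcal P=\Fg\{\,p^{-1}(F)\cap q^{-1}(H)\mid F\in\mathcal F,\ H\in\mathcal H\,\}.
\]
A cone from $\mathcal K$ consists of $\alpha,\beta$ with $\epsilon\circ\alpha=\psi\circ\beta$; representing these by $a,b$ one has $f\circ a\equiv_{\mathcal K}h\circ b$, so $v\mapsto(a(v),b(v))$ lands in $P$ and, because $\alpha,\beta$ are admissible and local, yields an admissible local mediating germ into $\mathcal P$, unique since the projections jointly monic. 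This identifies $\mathcal P$ with $\psi'=p/\mathcal P$ and $\epsilon'=q/\mathcal P$ as the pullback (in agreement with the product-and-equalizer construction underlying Theorem~\ref{T:FilHasFiniteLimits}).

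The heart of the argument is the computation of $\epsilon'(\mathcal P)=q(\mathcal P)$. Expanding a generating set,
\[
q\bigl(p^{-1}(F)\cap q^{-1}(H)\bigr)=H\cap\dd(h)\cap h^{-1}\bigl(f(F)\bigr).
\]
This is exactly where the hypothesis $\epsilon\in\mathbf E^\CatFil$ is used: $f(\mathcal F)=\mathcal G$ gives $f(F)=f(F\cap\dd(f))\in f(\mathcal F)=\mathcal G$ for every $F\in\mathcal F$, and locality of $h$ with respect to $\mathcal G$ then yields $h^{-1}(f(F))\in\mathcal H$; since also $H,\dd(h)\in\mathcal H$, each generating set lies in $\mathcal H$, whence $q(\mathcal P)\subseteq\mathcal H$. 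Conversely each such generating set is contained in $H$, so every $H\in\mathcal H$ already belongs to the filter $q(\mathcal P)$, giving $\mathcal H\subseteq q(\mathcal P)$. Therefore $q(\mathcal P)=\mathcal H$, that is $\epsilon'(\mathcal P)=\mathcal H$ and $\epsilon'\in\mathbf E^\CatFil$.

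The main obstacle I anticipate is not this final computation, which is short once the pieces are assembled, but the explicit verification that the filter $\mathcal P$ above genuinely represents the pullback — in particular the bookkeeping showing the mediating partial function is admissible and local, and that the boundary cases involving partiality (e.g.\ $\nullset\in\mathcal G$) introduce no difficulty. One may shorten this part by appealing to Theorem~\ref{T:FilHasFiniteLimits} for existence and to the product-and-equalizer description to pin down the pullback filter up to the isomorphism supplied by the factorization system, since $\mathbf E^\CatFil$ is invariant under the isomorphisms of $\CatFil$.
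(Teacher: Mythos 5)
Your proof is correct, and it takes a genuinely more direct route than the paper's. The paper does not build the pullback by hand: it first disposes of the boundary case $\nullset\in\mathcal G$ (where $\varepsilon$ is an isomorphism, hence so is its pullback), then invokes Remark~\ref{R:TotalFunction} to replace all representatives by total functions, realizes the pullback through the product-then-equalizer construction behind Theorem~\ref{T:FilHasFiniteLimits}, and finally, for each $P\in\mathcal P$, chooses $F\in\mathcal F$ and $H\in\mathcal H$ with $\varphi$'s representative mapping $H$ into the image of $F$, so that the $\Set$-pullback $P'=H\times_{e(F)}F\subseteq P$ projects \emph{onto} $H$, giving $H\subseteq e'(P)$. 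You instead exhibit the pullback explicitly as the filter $\Fg\{\,p^{-1}(F)\cap q^{-1}(H)\mid F\in\mathcal F,\ H\in\mathcal H\,\}$ on the set-theoretic pullback and collapse the whole argument into the one formula $q\bigl(p^{-1}(F)\cap q^{-1}(H)\bigr)=H\cap h^{-1}\bigl(f(F)\bigr)$: this set lies in $\mathcal H$ because $f(F)\in\mathcal G$ (the $\mathbf E$-hypothesis) and $h$ is local, which is the substantive inclusion, and it is contained in $H$, which is the automatic one. What your route buys is uniformity: no case split on $\nullset\in\mathcal G$, no reduction to total representatives, and both inclusions fall out of a single computation valid for arbitrary partial functions. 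What it costs — and what the paper avoids by citing its finite-limit theorems — is the bookkeeping you flag yourself: verifying that your filter really has the universal property (admissibility and locality of the mediating germ, joint monicity of the projections), together with the closing observation, which you correctly supply, that $\mathbf E^\CatFil$ is closed under composition with isomorphisms, so the conclusion transfers from your model of the pullback to any other. At bottom the two proofs share the same insight — the $\mathbf E$-property of $\varepsilon$ makes the images of sets of $\mathcal F$ into elements of $\mathcal G$, and locality of the other leg pulls them back into $\mathcal H$ — but yours packages it as a filter identity where the paper packages it as a surjectivity statement in $\Set$.
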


\begin{proof}
Let $\varepsilon\in\mathbf E^\CatFil(\mathcal F,\mathcal G)$, and let us pull it back along $\varphi:\mathcal H\to\mathcal G$, giving $\mathcal P$ and $\varepsilon'\in\CatFil(\mathcal P,\mathcal H)$ which we want to prove is in $\mathbf E^\CatFil$. 

First, let us deal with the boundary case in which $\nullset\in\mathcal G$. Then also $\nullset\in\mathcal F$ and $\varepsilon$ is an isomorphism, whence the pullback $\varepsilon'$ is too, so that $\varepsilon'\in\mathbf E^\CatFil$.

Assuming, on the contrary, that $\nullset\notin\mathcal G$, we can use Remark~\ref{R:TotalFunction}, and drawing the detailed diagram
\[\xymatrix{&\mathcal P\ar@{.>}[dl]_{\varepsilon'}\ar@{.>}[dr]^\psi\\
\mathcal H\ar[dr]_\varphi &&\mathcal F,\ar[dl]^\varepsilon\\
& \mathcal G
}\]
we can assume that there are total functions $f$ and $e$ representing $\varphi$ and $\varepsilon$, respectively. We can construct our pullback $\varepsilon'$ by first constructing the product
\[\xymatrix{
&\mathcal H\times\mathcal F\ar@{.>}[dl]_\pi\ar@{.>}[dr]^{\pi'}\\
\mathcal H && \mathcal F
}\]
and then the equalizer of the arrows $\varphi\circ\pi$, $\varepsilon\circ\pi':\mathcal H\times\mathcal F\to\mathcal G$:
\[\xymatrix{
\mathcal P\ar@{.>}[rr]^\lambda&&
\mathcal H\times\mathcal F\ar@<-.6ex>[rr]_{\varphi\circ\pi}\ar@<.6ex>[rr]^{\varepsilon\circ\pi'}
&&\mathcal G,
}\]
after which we can set $\varepsilon'=\pi\circ\lambda$ and $\psi=\pi'\circ\lambda$. 

To show $\varepsilon'\in\mathbf E^\CatFil$, we need to show that $\mathcal H=\varepsilon'(\mathcal P)$, or in other words, since we have $\varepsilon'(\mathcal P)\leq\mathcal H$ just because $\varepsilon'$ is an arrow, that $\mathcal H\leq\varepsilon'(\mathcal P)$. To show this, it will suffice to show that for some admissible partial function $e'$ representing $\varepsilon'$, and any $P\in\mathcal P$, there is an $H\in\mathcal H$ such that $H\subseteq e'(P)$.

Examining the proof of the existence of equalizers, we see that under our current assumption that $\nullset\notin\mathcal G$, not only $\varphi$ and $\varepsilon$, but also $\lambda$ can be represented by a total function, and so can $\pi$ and $\pi'$ from the proof of the existence of finite products. Thus, the partial function $e'$ representing $\varepsilon'$ can be assumed to be a total function.

Let $P\in\mathcal P$. Let $F\in\mathcal F$, and $H\in\mathcal H$, be such that $f(H)\subseteq e(F)$. (This is possible because $\varepsilon\in\mathbf E^\CatFil$ and $f$ is local.)  At the same time, let $H$ and $F$ be such that  $\pi^{-1}(H)\cap(\pi')^{-1}(F)\subseteq P$.  Consider the diagram in $\Set$, which is a pullback diagram:
\[\xymatrix{&P'=H \times_{e(F)}  F\ar@{.>}[dl]_{\pi|_{P'}}\ar@{.>}[dr]^{\pi'|_{P'}}\\
H\ar[dr]_{f|_H} && F\ar[dl]^{e|_F}\\
& e(F)
}\]
where we know very well that in $\Set$, since $e$ maps $F$ onto $e(F)$, the function $\pi$ also maps $P'$ onto $H$. But $P'\subseteq P$. Since $H$ can be made as small as desired in the filter $\mathcal H$, this proves that $\varepsilon'=\pi/\mathcal P\in\mathbf E^\CatFil$.

\end{proof}

\section{The Core of a Filter}\mylabel{S:Core}

\begin{definition} Let $\mathcal F$ be a filter of subsets of $S$. The subset $\bigcap_{F\in\mathcal F}F\subseteq S$ is called the \emph{core} of $\mathcal F$ and denoted by $\core\mathcal F$. If $\varphi\in\CatFil(\mathcal F,\mathcal G)$, then $\core \varphi$ will denote the restriction of $f$ to the filter $\Fg\{\,\core\mathcal F\,\}\leq\mathcal F$.
\end{definition}

\begin{proposition} 
The mapping $S\mapsto \{\,S\,\}$, and the functor $\Gamma$ sending $f:S\to S'$ to $f/\{\,S\,\}$, define a functor $L:\Set\to\LPartial$.
There are adjunctions
\[\langle L,\core,\eta,\varepsilon\rangle:\Set\rightharpoonup\LPartial\]
where $\eta_S=1_S$ and $\varepsilon_{\pair S{\mathcal F}}$ is the inclusion of $\core\mathcal F$ into $S$,
and
\[\langle \Gamma\circ L,\core,\alpha'\rangle:\Set\rightharpoonup\CatFil,\]
where $\Gamma:\LPartial\to\CatFil$ is the functor defined in Subsection~\ref{S:CatFilSub}.
\end{proposition}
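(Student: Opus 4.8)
The plan is to prove, in order, (i) that $L$ and $\core$ are functors, (ii) that $\langle L,\core,\eta,\varepsilon\rangle$ satisfies the triangle identities, and (iii) that the hom-set bijection $\alpha'$ for $\Gamma\circ L\dashv\core$ exists and is natural. The single observation driving everything is that for $\mathcal F\in\Fil S$ one has $\core\mathcal F=\bigcap_{F\in\mathcal F}F\subseteq F$ for every $F\in\mathcal F$; in particular $\core\mathcal F\subseteq\dd(f)$ whenever $f$ is admissible wrt $\mathcal F$, while $\core(\{S\})=S$.

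First I would check $L$ is a functor: a (total) function $f\colon S\to S'$ has $\dd(f)=S\in\{S\}$, so is admissible wrt $\{S\}$, and $f^{-1}(S')=S\in\{S\}$, so is local wrt $\{S'\}$; preservation of identities and composites is inherited from $\Set$ since $L$ acts as the identity on underlying functions. Next, $\core$ is a functor $\LPartial\to\Set$: given $f\in\LPartial(\mathcal F,\mathcal G)$, admissibility yields $\core\mathcal F\subseteq\dd(f)$, and locality yields, for each $G\in\mathcal G$, some $F\in\mathcal F$ with $f(F)\subseteq G$, whence $f(\core\mathcal F)\subseteq f(F)\subseteq G$; intersecting over all $G$ gives $f(\core\mathcal F)\subseteq\core\mathcal G$. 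Thus the restriction $f|_{\core\mathcal F}\colon\core\mathcal F\to\core\mathcal G$ is a well-defined total function, which I take to be $\core f$, and functoriality is immediate.

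For the first adjunction I would note $\core\circ L=1_{\Set}$ exactly, forcing the unit $\eta_S=1_S$, and take the counit $\varepsilon_{\pair S{\mathcal F}}\colon\{\core\mathcal F\}\to\mathcal F$ to be the inclusion $\core\mathcal F\hookrightarrow S$. This is a legitimate arrow of $\LPartial$ since its domain of definition $\core\mathcal F$ lies in $\{\core\mathcal F\}$ and, for each $F\in\mathcal F$, its preimage of $F$ is $\core\mathcal F\cap F=\core\mathcal F\in\{\core\mathcal F\}$, giving locality. Naturality of $\varepsilon$ is the commuting square whose two legs both send $x\in\core\mathcal F$ to $f(x)$. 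The two triangle identities then reduce to $\core(\varepsilon_{\pair S{\mathcal F}})\circ\eta_{\core\mathcal F}=1_{\core\mathcal F}$ and $\varepsilon_{\pair S{\{S\}}}\circ L(\eta_S)=1_{\{S\}}$, both holding because $\eta$ is an identity and $\core$ of an inclusion, restricted to the core, is again an identity.

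For the second adjunction I would first observe that $\core$ descends along $\Gamma$: if $f\equiv_{\mathcal F}g$ then, both being defined on $\core\mathcal F$ and agreeing on some $F_0\in\mathcal F$ with $\core\mathcal F\subseteq F_0$, we get $f|_{\core\mathcal F}=g|_{\core\mathcal F}$, so $\core(f/\mathcal F)=f|_{\core\mathcal F}$ is representative-independent and defines a functor $\CatFil\to\Set$ with $\core\circ\Gamma$ equal to the previous $\core$. The bijection $\alpha'$ comes from identifying $\CatFil(\Gamma L S,\mathcal F)=\CatFil(\{S\},\mathcal F)$ with $\Set(S,\core\mathcal F)$: an arrow of the former is the $\equiv_{\{S\}}$-germ of a total function $S\to\bigcup\mathcal F$, and since $\equiv_{\{S\}}$ is trivial on total functions such arrows are exactly the total functions $g$ that are local wrt $\mathcal F$, i.e.\ with $g(S)\subseteq\bigcap_{G\in\mathcal F}G=\core\mathcal F$; corestricting the codomain gives $\alpha'$, and naturality in $S$ and $\mathcal F$ follows by unwinding composition. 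The delicate point I expect to watch is precisely this equivalence ``local wrt $\mathcal F$ $\iff$ image in $\core\mathcal F$,'' together with the boundary case $\core\mathcal F=\emptyset$ (forced when $\emptyset\in\mathcal F$), where both sides are empty unless $S=\emptyset$; the remainder is bookkeeping with restrictions and inclusions.
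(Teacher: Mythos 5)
Your proof is correct and follows essentially the same route as the paper: the whole argument rests on the observations that an admissible partial function out of $L(S)=\{S\}$ must be total (with $\Gamma$ then doing nothing) and that locality with respect to $\mathcal G$ is equivalent to the image landing in $\core\mathcal G$. You supply details the paper leaves implicit — notably that $\core$ descends along $\Gamma$ to a functor on $\CatFil$, and the triangle-identity check for the first adjunction — but the underlying mechanism is identical.
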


\begin{proof} To have these adjunctions, we must have isomorphisms
\[\chi_{S,\mathcal G}:\LPartial(L(S),\mathcal G)\cong\Set(S,\core\mathcal G)\]
and
\[\chi'_{S,\mathcal G}:\CatFil(\Gamma(L(S)),\mathcal G)\cong\Set(S,\core\mathcal G)\]
natural in $S$ and $\mathcal G$.

Since $L(S)=\{\,S\,\}$, for $L(S)$ be admissible, a partial function between values of the functor $L$ must be a total function, and the functor $\Gamma$ does nothing.  Thus, the isomorphisms $\chi$ and $\chi'$ simply relate total functions to total functions.  Naturality in $S$ and in $\mathcal G$ is straightforward.
\end{proof}

\begin{remark} Thus, if we decide to study objects in $\CatFil$ that have additional structure, such as groups or other algebra structures, we have available right adjoint \emph{forgetful} functors that will yield groups or other algebras.  However, note that the forgetting that the core functor does can be very extensive. For example, a filter can have an empty core, or a core with just one element. Certainly we should expect to have more of interest to study in many such cases than the trivial group.
\end{remark}

\section{Monoidal Products}\mylabel{S:FilMonoidal}

\subsection{\texorpdfstring{$\mathcal F\opsquare\mathcal G$}{OpsquareCatFil}}\mylabel{S:FilOpSquare}
Suppose that $\mathcal F$ and $\mathcal G$ are filters of subsets of sets $S$ and $T$, respectively. If
$F\in\mathcal F$ and $g:F\to\mathcal G$ (i.e., $g$
is any function assigning a subset in the filter $\mathcal
G$ to each element of $F$), then we define
$F\opsquare g=\{\,\pair st\mid s\in F,t\in
g(s)\,\}$. More generally, if $g:F\to\Sub S$ is any function such that $\{\,s\mid g(s)\in\mathcal G\,\}\in\mathcal F$, then we define $F\opsquare g$ just the same, as $\{\,\pair st\mid s\in F,t\in g(s)\,\}$.

\begin{theorem} We have
\begin{enumerate}
\item 
The set \[\{\,F\opsquare
g\mid F\in\mathcal
F,\,g:F\to\mathcal G\,\}\] 
 is a base
for a filter $\mathcal F\opsquare\mathcal G$ of subsets of
$S\times T$;
\item
the filter $\mathcal F\opsquare\mathcal G$ consists of those subsets $H\subseteq S\times T$ such that \[\{\,s\mid\{\,t\in T\mid\pair st\in H\,\}\in \mathcal G\,\}\in\mathcal F;\]
\item every subset in $\mathcal F\opsquare\mathcal G$ has the form $S\opsquare g$ for some $g:S\to\Sub T$.
\end{enumerate}
\end{theorem}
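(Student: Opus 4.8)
The plan is to work throughout with the \emph{section} of a subset $H\subseteq S\times T$ at a point $s\in S$, namely $H_s=\{\,t\in T\mid\pair st\in H\,\}$. The single observation that drives all three parts is that for a basic set $F\opsquare g$ one has $(F\opsquare g)_s=g(s)$ when $s\in F$ and $(F\opsquare g)_s=\nullset$ otherwise; and that, writing $P(H)=\{\,s\mid H_s\in\mathcal G\,\}$, part (2) asserts exactly $H\in\mathcal F\opsquare\mathcal G\iff P(H)\in\mathcal F$, while part (3) asks that $H=S\opsquare(s\mapsto H_s)$.

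For part (1), I would take two basic sets $F_1\opsquare g_1$ and $F_2\opsquare g_2$ with each $g_i\colon F_i\to\mathcal G$ and compute their intersection sectionwise: $\pair st$ lies in both exactly when $s\in F_1\cap F_2$ and $t\in g_1(s)\cap g_2(s)$. Setting $F=F_1\cap F_2\in\mathcal F$ and $g(s)=g_1(s)\cap g_2(s)$, the function $g$ still takes values in $\mathcal G$ because $\mathcal G$ is closed under binary intersection, so $F\opsquare g$ is again a basic set and equals, hence is contained in, the intersection. This is precisely the filter-base condition.

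Part (2) then follows from the description of $\Fg$ of a base as its upward closure $\Up$, which is legitimate since the collection is a base by part (1). For the forward inclusion, if $F\opsquare g\subseteq H$ with $g\colon F\to\mathcal G$, then for every $s\in F$ the section inclusion $g(s)=(F\opsquare g)_s\subseteq H_s$ together with upward closure of $\mathcal G$ gives $H_s\in\mathcal G$, so $F\subseteq P(H)$ and hence $P(H)\in\mathcal F$. For the reverse inclusion, given $P(H)\in\mathcal F$, I set $F=P(H)$ and $g(s)=H_s$ for $s\in F$; then $g\colon F\to\mathcal G$ by the very definition of $P(H)$, and $F\opsquare g\subseteq H$ sectionwise, so $H$ belongs to the upward closure of the base, that is, to $\mathcal F\opsquare\mathcal G$.

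Finally, part (3) is immediate from part (2): given $H\in\mathcal F\opsquare\mathcal G$, define $g\colon S\to\Sub T$ by $g(s)=H_s$. Since $\pair st\in S\opsquare g$ iff $t\in g(s)=H_s$ iff $\pair st\in H$, we get $H=S\opsquare g$; and the admissibility condition $\{\,s\mid g(s)\in\mathcal G\,\}=P(H)\in\mathcal F$ needed for $S\opsquare g$ to be legitimate in the more general sense is exactly what part (2) supplies. There is no genuine obstacle here beyond careful bookkeeping; the one point to watch is in part (1), verifying that the pointwise intersection $g_1\cap g_2$ still lands in $\mathcal G$ rather than merely in $\Sub T$, which is where the intersection axiom for the filter $\mathcal G$ is used.
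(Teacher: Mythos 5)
Your proposal is correct and takes essentially the same approach as the paper's proof: pointwise intersection of the section functions (using the intersection axiom of $\mathcal G$) for the filter-base property, upward closure for the characterization in part (2), and the section map $s\mapsto\{\,t\in T\mid\pair st\in H\,\}$ for part (3). Your explicit verification that $\{\,s\mid g(s)\in\mathcal G\,\}\in\mathcal F$, so that $S\opsquare g$ is legitimate in the generalized sense, makes precise a point the paper leaves tacit, but the argument is the same.
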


\begin{proof}
(1): Given $F_1\opsquare g_1$ and $F_2\opsquare g_2$, we have $F\opsquare g\subseteq (F_1\opsquare g_1)\cap(F_2\opsquare g_2)$, where $F=F_1\cap F_2$ and for $s\in F$, 
$g(s)=
g_1(s)\cap g_2(s)
$. 

(2): Given $H$ satisfying the condition, let $F=\{\,s\mid\{\,t\in T\mid\pair st\in H\,\}\in\mathcal G\,\}\in\mathcal F$. For each $s\in F$, let $g(s)=\{\,t\mid\pair st\in H\,
\}\in\mathcal G$. Then $F\opsquare f\subseteq H$, so $H\in\mathcal F\opsquare\mathcal G$.

On the other hand, if $D\opsquare f$ is an element of the base of $\mathcal F\opsquare\mathcal G$, then it clearly satisfies the condition. Then we need only see that the set of subsets satisfying the condition is closed upward.

(3): Given $H\in\mathcal F\opsquare\mathcal G$, let $g(s)=\{\,t\in T\mid\pair st\in H\,\}$. Then $H=S\opsquare g$.

\end{proof}

Part (2) of the Theorem suggests some notation we will use later:

\begin{notation}\label{N:Fh}
	If $\mathcal F$ is a filter of subsets of a set $S$, $\mathcal G$ is filter of subsets of a set $T$, and we have a subset $X\in\mathcal F\opsquare\mathcal G$, then we define
	\begin{enumerate}
		\item  $F_{\mathcal F,\mathcal G,X}=\{\,s\in S\mid\{\,t\in T\mid\pair st\in X\,\}\in\mathcal G\,\}$ and
		\item  $h_{\mathcal F,\mathcal G,X}=[\,s\in F_{\mathcal F,\mathcal G,X}\mapsto\{\,t\in T\mid\pair st\in X\,\}\,]$.
	\end{enumerate}
\end{notation}

\subsection{\texorpdfstring{Monoidal products in $\LPartial$ and $\CatFil$}{OpsquareLPartialCatFil}}
\begin{definition}
If $f\in\LPartial(\mathcal
F,\mathcal F')$ and $g\in\LPartial(\mathcal G,\mathcal G')$, where
$\dd(f)=F$ and $\dd(g)=G$, then we define
$f\opsquare_pg:\mathcal F\opsquare\mathcal G\to\mathcal F'\opsquare\mathcal G'$ to be
the partial function with domain of definition
$F\times G$, sending a pair
$\pair st$ to
$\pair{f(s)}{g(t)}$.
\end{definition}

\begin{theorem} We have
\begin{enumerate}
\item The foregoing defines a functor
$\opsquare_p:\LPartial\times\LPartial\to\LPartial$.
\item If $f\equiv_{\mathcal F} f'$ and $g\equiv_{\mathcal G}g'$, then
$(f\opsquare_pg)\equiv_{\mathcal F\opsquare\mathcal G}
(f'\opsquare_pg')$.
\item Setting
$\opsquare_g=\opsquare_p$
on objects, and
$(f/{\mathcal F})\opsquare_g(g/{\mathcal G})=(f\opsquare_pg)/{\mathcal F\opsquare\mathcal G}$ on arrows, defines a functor
$\opsquare_g:\CatFil\times\CatFil\to\CatFil$.
\item $\Gamma(-_1\opsquare_p-_2)
=\Gamma(-_1)\opsquare_g\Gamma(-_2):\LPartial\times
\LPartial\to\CatFil$.
\item $\core(-_1\opsquare_p-_2)=
\core(-_1)\times\core(-_2):\LPartial\times\LPartial\to
\Set$.
\item $\core(-_1\opsquare_g-_2)=
\core(-_1)\times\core(-_2):\CatFil\times\CatFil\to\Set$.
\end{enumerate}
\end{theorem}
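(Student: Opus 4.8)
The plan is to treat the six assertions in order, since each later part leans on the earlier ones. The substantive work is concentrated in part~(1); parts~(2)--(6) are then either short witness-exhibitions or formal consequences.

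For part~(1) the first thing to check is that $f\opsquare_pg$ is genuinely an arrow of $\LPartial$, i.e.\ admissible and local. Admissibility is easy: $\dd(f\opsquare_pg)=F\times G$, and $F\times G=F\opsquare g_0$, where $g_0$ is the constant function $s\mapsto G$, which is a basic element of $\mathcal F\opsquare\mathcal G$ because $F\in\mathcal F$ and $G\in\mathcal G$. Locality is the one point needing care, since a basic set $F'\opsquare g'$ of $\mathcal F'\opsquare\mathcal G'$ has its second coordinate $g'$ varying with the first, so one cannot simply take a product of preimages. Because the sets $F'\opsquare g'$ form a base and preimages are monotone, it suffices to check these. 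I would verify membership of $(f\opsquare_pg)^{-1}(F'\opsquare g')$ in $\mathcal F\opsquare\mathcal G$ directly from part~(2) of the theorem describing $\mathcal F\opsquare\mathcal G$: for fixed $s\in f^{-1}(F')$ the fibre $\{\,t\mid\pair st\in(f\opsquare_pg)^{-1}(F'\opsquare g')\,\}$ equals $g^{-1}(g'(f(s)))$, which lies in $\mathcal G$ by locality of $g$ (as $g'(f(s))\in\mathcal G'$); and the set of such $s$ contains $f^{-1}(F')\in\mathcal F$ by locality of $f$. Hence the required outer set is in $\mathcal F$, giving locality. Preservation of identities is immediate, and preservation of composition holds because both $(f'\circ f)\opsquare_p(g'\circ g)$ and $(f'\opsquare_pg')\circ(f\opsquare_pg)$ are the pointwise map $\pair st\mapsto\pair{f'(f(s))}{g'(g(t))}$ with matching domains of definition, using Lemma~\ref{T:DDLemma}.

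Part~(2) is handled by producing an explicit witness: if $F\in\mathcal F$ witnesses $f\equiv_{\mathcal F}f'$ and $G\in\mathcal G$ witnesses $g\equiv_{\mathcal G}g'$, then $F\times G\in\mathcal F\opsquare\mathcal G$ witnesses $(f\opsquare_pg)\equiv_{\mathcal F\opsquare\mathcal G}(f'\opsquare_pg')$, since intersecting domains of definition with $F\times G$ factors coordinatewise and the two partial functions agree pointwise there. Part~(3) then follows formally: part~(2) shows the formula $(f/\mathcal F)\opsquare_g(g/\mathcal G)=(f\opsquare_pg)/(\mathcal F\opsquare\mathcal G)$ is independent of the chosen representatives, part~(1) supplies functoriality of $\opsquare_p$, and functoriality of $\opsquare_g$ descends through $\Gamma$ because composition of germs is computed on representatives. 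Part~(4) is essentially a restatement of the definition in part~(3): both functors agree with $\opsquare$ on objects and send $\pair fg$ to $(f\opsquare_pg)/(\mathcal F\opsquare\mathcal G)$ on arrows.

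The remaining two parts concern the core. For part~(5) I would first prove the object-level identity $\core(\mathcal F\opsquare\mathcal G)=\core\mathcal F\times\core\mathcal G$. The inclusion $\supseteq$ uses the characterization of $\mathcal F\opsquare\mathcal G$: if $s\in\core\mathcal F$ and $t\in\core\mathcal G$, then for every $H\in\mathcal F\opsquare\mathcal G$ the set $F_{\mathcal F,\mathcal G,H}\in\mathcal F$ contains $s$, so the fibre over $s$ lies in $\mathcal G$ and hence contains $t$, giving $\pair st\in H$; the inclusion $\subseteq$ is obtained by testing against the basic sets $F\times G$. On arrows, $\core(f\opsquare_pg)$ is the restriction of $f\opsquare_pg$ to $\core\mathcal F\times\core\mathcal G$ (every such point lies in $\dd(f)\times\dd(g)$, since $\core\mathcal F\subseteq\dd(f)$ by admissibility), and this coincides pointwise with $\core f\times\core g$; locality guarantees $f(\core\mathcal F)\subseteq\core\mathcal F'$, so the target is correct. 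Finally, part~(6) follows by combining parts~(4) and~(5) with the identity $\core\circ\Gamma=\core$ on $\LPartial$, which holds because equivalent representatives agree on $\core\mathcal F$ (as $\core\mathcal F$ is contained in every element of $\mathcal F$): indeed $\core(\varphi\opsquare_g\psi)=\core\,\Gamma(f\opsquare_pg)=\core(f\opsquare_pg)=\core f\times\core g=\core\varphi\times\core\psi$. I expect the locality verification in part~(1) and the object-level core computation in part~(5) to be the only places requiring genuine argument; the rest is bookkeeping.
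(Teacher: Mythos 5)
Your proof is correct and, for parts (1)--(4), follows essentially the same route as the paper's: the locality verification in (1) amounts to exhibiting the same basic set $f^{-1}(F')\opsquare h$ with $h(s)=g^{-1}(h'(f(s)))$ that the paper constructs (you phrase it through the fibrewise characterization of membership in $\mathcal F\opsquare\mathcal G$ rather than naming the basic set, but the computation is identical), and $F\times G$ is the same witness for (2). You are more thorough in one respect: you actually check preservation of identities and of composition for $\opsquare_p$, which the paper passes over in silence. The genuine divergence is in (5) and (6), where the paper disposes of both claims with the single remark that the core functor is a right adjoint. Since $\mathcal F\opsquare\mathcal G$ is \emph{not} the categorical product in $\LPartial$ or $\CatFil$ (the product filter is generated by the rectangles $F\times G$ alone, while $\opsquare$ admits the finer sets $F\opsquare g$), that remark cannot be read as ``right adjoints preserve products''; it is really shorthand for the strict-monoidality of $\core$ asserted only in the following proposition. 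Your direct computation --- $\core(\mathcal F\opsquare\mathcal G)=\core\mathcal F\times\core\mathcal G$ by testing $\pair st$ against every $H$ via the fibre criterion, together with the observation that admissibility puts $\core\mathcal F$ inside $\dd(f)$ and locality forces $f(\core\mathcal F)\subseteq\core\mathcal F'$ --- is the honest content behind that shorthand, and makes your version the more self-contained of the two; the paper's buys brevity at the cost of a forward reference the reader must unwind.
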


\begin{proof} (1): Let $f\in\LPartial(\mathcal F,\mathcal F')$, and  $g\in\LPartial(\mathcal G,\mathcal G')$. $F\times G\in\mathcal F\opsquare \mathcal G$, because $F\times G=F\opsquare [s\in F\mapsto G]$. Thus, $f\opsquare g$ is admissible. To show $f\opsquare g$ is local, consider a base element $F'\opsquare h'\in \mathcal F'\opsquare \mathcal G'$, where $F'\in\mathcal F'$ and $h':F'\to\mathcal G'$. We have $f^{-1}(F')\in\mathcal F$. Consider now $f^{-1}(F')\opsquare h$, where $h:f^{-1}(F')\to\mathcal G$ is defined by setting $h(s)=g^{-1}(h'(f(s)))$. Then if $\pair st\in f^{-1}(F')\opsquare h$, we have 
\begin{align*}
(f\opsquare g)\pair st&=\pair{f(s)}{g(t)}\in F'\opsquare h',
\end{align*}
because $f(s)\in F'$ and $g(t)\in h'(f(s))$.

(2): Suppose $f|_F=f'|_F$ and $g|_G=g'|_G$, where $F\in\mathcal F$ and $G\in\mathcal G$. Then if $\pair st\in F\times G $, we have $(f\opsquare_p g)\pair st=(f'\opsquare_p g')\pair st$. But $F\times G\in\mathcal F\opsquare\mathcal G$, so $(f\opsquare_p g)\equiv_{\mathcal F\opsquare_p\mathcal G}(f'\opsquare g')$.

(3): Follows from (2).

(4): Follows by the definition of $\opsquare_g$ in (3).

(5), (6):  The core functor is a right adjoint functor, after all.
\end{proof}

\subsection{Unit object, unit and associativity natural
isomorphisms, and coherence}\mylabel{S:FilMonoidalStuff}

If $S$, $T$, and $W$ are sets, let
$\alpha_{S,T,W}:S\times (T\times W)\to
(S\times T)\times W$ be the function
defined by
$\pair s{\pair tw}\to\pair{\pair st}w$.
If $\mathcal D$, $\mathcal D'$, and
$\mathcal D''$ are filters on $S$, $T$, and $W$ respectively,
we set
$\alpha^p_{\mathcal D,\mathcal D',\mathcal D''}=\alpha_{S,T,W}$,
considered as a partial
function. We set $\alpha^g_{\mathcal D,\mathcal D',\mathcal D''}=\alpha_{S,T,W}/(\mathcal D\opsquare(\mathcal D'\opsquare\mathcal D''))$. 

\begin{theorem}
$\alpha^p_{\mathcal D,\mathcal D',\mathcal D''}\in\LPartial(\mathcal D\opsquare(\mathcal D'\opsquare\mathcal D''),(\mathcal D\opsquare\mathcal D')\opsquare\mathcal D'')$ and is an isomorphism.
Similarly, its germ in $\CatFil$ is an isomorphism.
\end{theorem}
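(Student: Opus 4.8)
The map $\alpha=\alpha_{S,T,W}$ is a total bijection, so its domain of definition is all of $S\times(T\times W)=\bigcup\bigl(\mathcal D\opsquare(\mathcal D'\opsquare\mathcal D'')\bigr)$, the top element of the subset lattice; hence it lies in every filter and $\alpha^p$ is automatically admissible. The entire content is therefore locality, and I would aim to prove one symmetric statement from which everything follows: for every subset $K\subseteq (S\times T)\times W$,
\[
\alpha^{-1}(K)\in\mathcal D\opsquare(\mathcal D'\opsquare\mathcal D'')\iff K\in(\mathcal D\opsquare\mathcal D')\opsquare\mathcal D''.
\]
Because $\alpha$ is a bijection, reading this equivalence one way gives locality of $\alpha$, while reading it the other way (take $L=\alpha^{-1}(K)$, so that $K=\alpha(L)$) gives locality of the inverse bijection $\alpha^{-1}$. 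Thus a single computation will settle both partial functions at once.

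To establish the equivalence I would repeatedly invoke the membership characterization in part~(2) of the Theorem of Subsection~\ref{S:FilOpSquare}, namely that $H\in\mathcal X\opsquare\mathcal Y$ exactly when $\{\,s\mid\{\,t\mid\pair st\in H\,\}\in\mathcal Y\,\}\in\mathcal X$. The key bookkeeping identity is that $\pair s{\pair tw}\in\alpha^{-1}(K)$ iff $\pair{\pair st}w\in K$. Applying the characterization at each of the three nesting levels and substituting this identity, the condition ``$\alpha^{-1}(K)\in\mathcal D\opsquare(\mathcal D'\opsquare\mathcal D'')$'' unwinds step by step into
\[
\{\,s\mid\{\,t\mid\{\,w\mid\pair{\pair st}w\in K\,\}\in\mathcal D''\,\}\in\mathcal D'\,\}\in\mathcal D,
\]
which is precisely what the same characterization, applied twice to $(\mathcal D\opsquare\mathcal D')\opsquare\mathcal D''$, produces for ``$K\in(\mathcal D\opsquare\mathcal D')\opsquare\mathcal D''$''. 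Since each step is an ``iff'', both implications are obtained simultaneously.

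With locality of $\alpha$ and of $\alpha^{-1}$ in hand, the $\LPartial$ assertion is immediate: $\alpha^p$ and $(\alpha^{-1})^p$ are admissible local partial functions, and as honest mutually inverse total bijections they satisfy $(\alpha^{-1})^p\circ\alpha^p=1$ and $\alpha^p\circ(\alpha^{-1})^p=1$ in $\LPartial$, so $\alpha^p$ is an isomorphism. For $\CatFil$ I would apply the functor $\Gamma\colon\LPartial\to\CatFil$, which carries isomorphisms to isomorphisms, to $\alpha^g=\Gamma(\alpha^p)$; alternatively, the displayed equivalence shows $\alpha\bigl(\mathcal D\opsquare(\mathcal D'\opsquare\mathcal D'')\bigr)=(\mathcal D\opsquare\mathcal D')\opsquare\mathcal D''$, and since $\alpha$ is one-one this directly meets the criterion characterizing isomorphisms in $\CatFil$.

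The main obstacle is purely the triple-nested ``currying'' bookkeeping: one must apply the part~(2) characterization cleanly at all three levels and keep the quantifier and pairing structure aligned so that the two sides visibly coincide. There is no analytic difficulty, since the totality and bijectivity of $\alpha$ make admissibility and invertibility free; the only real care needed is to carry out the nesting without index errors.
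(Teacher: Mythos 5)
Your proof is correct, and its core computation takes a genuinely different (and tighter) route than the paper's. The skeleton is shared: admissibility is free because $\alpha_{S,T,W}$ is total, the whole content is locality of $\alpha$ and of its set-theoretic inverse, and the $\CatFil$ statement follows by applying $\Gamma$, or alternatively by the paper's characterization of isomorphisms as germs of one-one partial functions $f$ with $f(\mathcal F)=\mathcal G$. The difference lies in how locality is established. The paper uses part~(3) of the Theorem of Subsection~\ref{S:FilOpSquare} to write a typical element of $\mathcal D\opsquare(\mathcal D'\opsquare\mathcal D'')$ as $X[f]=S\opsquare f$ and then constructs specific $h:S\to\Sub T$ and $k:S\times T\to\Sub W$ via projections, $h(s)=p_1(f(s))$ and $k(s,t)=p_2(f(s))$, claiming $\alpha_{S,T,W}^{-1}\bigl((S\opsquare h)\opsquare k\bigr)=X[f]$. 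You instead prove a single symmetric membership equivalence for an \emph{arbitrary} subset $K\subseteq(S\times T)\times W$ by applying the part~(2) characterization at each of the three nesting levels, and then let bijectivity of $\alpha$ convert that one equivalence into both localities at once. Besides economy, your route buys correctness at a point where the paper is shaky: under the natural set-projection reading of $p_1,p_2$, the step ``$\pair tw\in h(s)\opsquare k(s,t)\iff \pair tw\in f(s)$'' in the paper's displayed chain holds only from right to left, since the inclusion $f(s)\subseteq p_1(f(s))\times p_2(f(s))$ is in general proper (for instance when $f(s)$ is the graph of a nonconstant function), so the claimed equality of sets fails; what is actually true, and what locality actually needs, is exactly the filter-level equivalence you prove from part~(2). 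In that sense your proposal does not merely parallel the paper's argument, it repairs it.
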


\begin{proof} $\alpha^p_{\mathcal D,\mathcal D',\mathcal D''}$ is admissible because $\alpha_{S,T,W}$ is a total function. To show it is an isomorphism, it suffices to show that both it and its inverse (in $\Set$) are local.

If $X\in\mathcal D\opsquare(\mathcal D'\opsquare\mathcal D'')$, then $X=S\opsquare f$ where $f:S\to\Sub (T\times W)$. In this proof, we will denote the subset $X=S\opsquare f$ of $S\times(T\times W)$ by $X[f]$.

On the other hand, if $Y\in(\mathcal D\opsquare\mathcal D')\opsquare\mathcal D''$, then $Y=Y[h,k]=(S\opsquare h)\opsquare k$ where $h:S\to\Sub T$ and $k:S\times T\to\Sub W$.

To see that $\alpha_{S,T,W}^{-1}((\mathcal D\opsquare\mathcal D')\opsquare\mathcal D'')= \mathcal D\opsquare(\mathcal D'\opsquare\mathcal D'')$, consider $X[f]$, and we will find $h$ and $k$ such that $\alpha_{S,T,W}^{-1}(Y[h,k])= X[f]$. Let $h:S\to\Sub T$ be defined by $h:s\mapsto p_1(f(s))$, where $p_1:T\opsquare g\to T$ is the projection to the first component of a pair. Let $k:S\opsquare h\to\Sub W$ be defined by $k:\pair st\mapsto p_2(f(s))$, where $p_2:T\opsquare g\to\Sub W$ projects a pair to its second component.
We have
\begin{align*}
\pair s{\pair tw}\in\alpha_{S,T,W}^{-1}(Y[h,k])&\iff\alpha_{S,T,W}\pair s{\pair tw}\in Y[h,k]\\
&\iff\pair{\pair st}w\in Y[h,k]\\
&\iff s\in S\text{ and }t\in h(s)\text{ and }w\in k(s,t)\\
&\iff s\in S\text{ and }\pair tw\in h(s)\opsquare k(s,t)\\
&\iff s\in S\text{ and } \pair tw\in f(s)\\
&\iff \pair s{\pair tw}\in X[f];
\end{align*}
it follows that both $\alpha_{\mathcal D,\mathcal D',\mathcal D''}$ and $\alpha_{\mathcal D,\mathcal D',\mathcal D''}^{-1}$ are local.

Applying the functor $\Gamma$, we obtain the corresponding statements about $\alpha^g_{\mathcal D,\mathcal D',\mathcal D''}$.
\end{proof}

\begin{notation}\mylabel{N:unit} Let $u$ denote the filter $\{\,1\,\}=\{\,\{\,0\,\}\,\}$ on the one-element set $1=\{\,0\,\}$.
\end{notation}

 If $\mathcal D$ is
a filter on the set $S$, and
$\lambda_S:1\times S\to S$ is the function defined by
$\pair 0s\mapsto s$, then we define $\lambda^p_{\mathcal D}=\lambda_S$, considered as a partial
function from $u\opsquare_p
\mathcal D$ to
$\mathcal D$. We also define
$\lambda^g_{\mathcal D}=\lambda^p_{\mathcal D}/{\mathcal D\opsquare u}\in\CatFil(u\opsquare\mathcal D,\mathcal D)$.
Similarly, if $\varrho_S:S\times 1\to S$ is the function
defined by $\pair s0\mapsto s$, then we  define
$\varrho^p_{\pair S{\mathcal D}}=\varrho_S$, considered
as a partial function from $\mathcal D
\opsquare u$ to
$\mathcal D$, and $\varrho^g_{\mathcal D}=\varrho^p_{\mathcal D}/{\mathcal D\opsquare u}\in\CatFil(\mathcal D\opsquare u,\mathcal D)$.

Note that $u$ is a terminal object both in
$\LPartial$ and in $\CatFil$ and that for any filter $\mathcal F$, the set $\core\mathcal F$ is naturally isomorphic both to $\LPartial(u,\mathcal F)$ and to $\CatFil(u,\mathcal F)$.

\renewcommand{\labelitemi}{$\text{ }$}
\begin{proposition}
These definitions yield natural isomorphisms
\begin{itemize}
\item
$\alpha^p:-_1\opsquare_p(-_2\opsquare_p-_3)
\cong(-_1\opsquare_p-_2)\opsquare_p-_3$,
\item
$\alpha^g:-_1\opsquare_g(-_2\opsquare_g-_3)
\cong(-_1\opsquare_g-_2)\opsquare_g-_3$,
\item $\lambda^p: u\opsquare_p-\cong -$,
\item $\lambda^g:u\opsquare_g-\cong -$,
\item $\varrho^p:-\opsquare_pu\cong -$, and
\item
$\varrho^g:-\opsquare_gu\cong -$
\end{itemize}
making $\langle\LPartial,\opsquare_p,u,\alpha^p,
\lambda^p,\varrho^p\rangle$ and
$\langle\CatFil,\opsquare_g,u,\alpha^g,\lambda^g,\varrho^g\rangle$ into
monoidal categories \cite[VII.1]{macl2}, \cite[Definition~\zref{II-D:Monoidal}]{reltclosed}, and the functors
$\Gamma:\LPartial\to\CatFil$,
$\core^p:\LPartial\to\Set$, and $\core:\CatFil\to\Set$
are strict morphisms of monoidal categories  \cite[VII.1]{macl2},\cite[Definition~\zref{II-D:StrictMonoidal}]{reltclosed}.
\end{proposition}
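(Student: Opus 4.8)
The plan is to push every verification down to the level of underlying total set-functions, where the Cartesian product makes $\Set$ a monoidal category whose associativity and unit isomorphisms are exactly $\alpha_{S,T,W}$, $\lambda_S$, and $\varrho_S$, and then to transport the resulting structure on $\LPartial$ across $\Gamma$ to $\CatFil$. First I would dispose of the two unit isomorphisms, the associativity case $\alpha^p$ having already been shown to be an isomorphism. By part (2) of the theorem on $\mathcal F\opsquare\mathcal G$, a subset $H$ lies in $u\opsquare\mathcal D$ exactly when $\{\,s\mid\pair 0s\in H\,\}\in\mathcal D$, so the bijection $\lambda_S\colon 1\times S\to S$ carries $u\opsquare\mathcal D$ isomorphically onto $\mathcal D$; thus both $\lambda_S$ and $\lambda_S^{-1}$ are local and $\lambda^p$ is an isomorphism in $\LPartial$. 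The argument for $\varrho^p$ using $\mathcal D\opsquare u$ is identical.

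Next I would check naturality together with the pentagon and triangle identities for $\langle\LPartial,\opsquare_p,u,\alpha^p,\lambda^p,\varrho^p\rangle$. The key observation is that the underlying partial function of $f\opsquare_p g$ is the Cartesian product $f\times g$ of underlying functions, that composition in $\LPartial$ is composition of underlying partial functions, and that $\alpha^p$, $\lambda^p$, $\varrho^p$ and all identities are total functions. Consequently every arrow appearing in a naturality square, in the pentagon, or in the triangle is a total function whose underlying set-map is precisely the corresponding map in the Cartesian monoidal structure on $\Set$. Since those diagrams commute in $\Set$, they commute in $\LPartial$, which makes $\langle\LPartial,\opsquare_p,u,\alpha^p,\lambda^p,\varrho^p\rangle$ a monoidal category.

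I would then transport this to $\CatFil$ using $\Gamma$. The functor $\Gamma$ is the identity on objects, satisfies $\Gamma(-_1\opsquare_p-_2)=\Gamma(-_1)\opsquare_g\Gamma(-_2)$ by part (4) of the $\opsquare$-functor theorem, fixes the unit $u$, and by the defining formulas sends $\alpha^p$, $\lambda^p$, $\varrho^p$ to $\alpha^g$, $\lambda^g$, $\varrho^g$. Since a functor preserves commuting diagrams and isomorphisms, the $\Gamma$-images of the naturality squares and of the pentagon and triangle commute and $\alpha^g$, $\lambda^g$, $\varrho^g$ are isomorphisms, so $\langle\CatFil,\opsquare_g,u,\alpha^g,\lambda^g,\varrho^g\rangle$ is monoidal; and $\Gamma$ preserves product, unit, associator and unitors on the nose, hence is a strict morphism of monoidal categories. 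For $\core^p$ and $\core$ I would invoke parts (5) and (6), which give $\core(-_1\opsquare-_2)=\core(-_1)\times\core(-_2)$ strictly, together with $\core u=\{\,0\,\}=1$. Under the identification $\core(\mathcal D\opsquare(\mathcal D'\opsquare\mathcal D''))=\core\mathcal D\times(\core\mathcal D'\times\core\mathcal D'')$ the restriction of $\alpha_{S,T,W}$ to the cores is exactly $\alpha_{\core\mathcal D,\core\mathcal D',\core\mathcal D''}$, and similarly for $\lambda$ and $\varrho$, so $\core^p$ and $\core$ carry the coherence isomorphisms of $\LPartial$ and $\CatFil$ to those of $\Set$ and are strict.

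The main obstacle, such as it is, is the reduction-to-$\Set$ step in the second paragraph: one must be certain that a diagram built from $\opsquare_p$ and composition commutes in $\LPartial$ precisely when the underlying diagram of set-maps commutes. This is legitimate here only because every arrow in the naturality squares, the pentagon, and the triangle is \emph{total}, so the product of two such arrows is again total and no discrepancy in domains of definition of partial composites can arise; were any structure map merely partial, the argument would require tracking $\dd$ carefully. Once this point is granted, everything else is bookkeeping driven by the already-established facts that $\alpha^p$ is an isomorphism and that $\Gamma$ and $\core$ interact with $\opsquare$ as in parts (4), (5), and (6).
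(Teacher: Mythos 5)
Your overall strategy (reduce everything to the cartesian monoidal structure on $\Set$, then transport along $\Gamma$ and read off strictness of the core functors) is the natural one; note that the paper itself states this proposition without any proof, treating it as routine, so there is no paper argument to compare against and your proposal stands or falls on its own. It has one genuine flaw, and it sits exactly at the point you yourself identify as the crux. You claim that ``every arrow appearing in a naturality square, in the pentagon, or in the triangle is a total function.'' That is true for the pentagon and triangle, whose arrows are built from $\alpha^p$, $\lambda^p$, $\varrho^p$, identities, and $\opsquare_p$-products of these; but it is false for the naturality squares. Naturality of $\alpha^p$ (and of $\lambda^p$, $\varrho^p$) must be verified against \emph{arbitrary} arrows $f\in\LPartial(\mathcal D_1,\mathcal E_1)$, $g\in\LPartial(\mathcal D_2,\mathcal E_2)$, $h\in\LPartial(\mathcal D_3,\mathcal E_3)$, which are genuinely partial, and then $f\opsquare_p(g\opsquare_p h)$ and $(f\opsquare_p g)\opsquare_p h$ are partial as well. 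So the premise from which you deduce that these squares may be checked in $\Set$ fails, and by your own admission the argument would then ``require tracking $\dd$ carefully.''

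Fortunately that tracking is short, and it is what should replace the faulty step: since $\alpha_{S,T,W}$ is a total bijection carrying $\dd f\times(\dd g\times\dd h)$ onto $(\dd f\times\dd g)\times\dd h$, both composites $\alpha^p_{\mathcal E_1,\mathcal E_2,\mathcal E_3}\circ\bigl(f\opsquare_p(g\opsquare_p h)\bigr)$ and $\bigl((f\opsquare_p g)\opsquare_p h\bigr)\circ\alpha^p_{\mathcal D_1,\mathcal D_2,\mathcal D_3}$ have domain of definition exactly $\dd f\times(\dd g\times\dd h)$ (composition with a total map on the left does not shrink a domain, and $\dd(k\circ\alpha)=\alpha^{-1}(\dd k)$), and on that common domain both send $\pair s{\pair tw}$ to $\pair{\pair{f(s)}{g(t)}}{h(w)}$; the same computation handles $\lambda^p$ and $\varrho^p$. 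With naturality repaired in $\LPartial$, the rest of your proof does go through: the pentagon and triangle reductions to $\Set$ are legitimate because there all arrows really are total; your filter computation via part (2) of the $\opsquare$ theorem correctly shows $\lambda^p$ and $\varrho^p$ are isomorphisms; and the transport to $\CatFil$ works because $\Gamma$ is bijective on objects and full by construction (every arrow of $\CatFil$ is by definition the germ of an arrow of $\LPartial$), so every naturality square in $\CatFil$ lifts to one in $\LPartial$ — a point worth making explicit — after which strictness of $\Gamma$, $\core^p$, and $\core$ follows from parts (4)--(6) as you say.
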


\section{\texorpdfstring{The Closed Category Structure of $\CatFil$}{ClosedCatFil}}\mylabel{S:FilClosed}

Let $\mathcal G$ be a filter of subsets of a set $T$,  $\mathcal
H$ be a filter of subsets of another set $W$, $q\in\LPartial(\mathcal G,\mathcal G')$, and $r\in\LPartial(\mathcal H',\mathcal H)$.
Since $\CatFil$ is a category, composition with $\gamma=q/\mathcal G$ on the left induces a function \[\CatFil(\mathcal H,\gamma):\CatFil(\mathcal H,\mathcal G)\to\CatFil(\mathcal H',\mathcal G)\]
and composition with $\gamma'=r/\mathcal H$ on the right produces a function
\[\CatFil(\gamma',\mathcal G):\CatFil(\mathcal H,\mathcal G)\to\CatFil(\mathcal H',\mathcal G);\]
and we have also
\[\CatFil(\gamma',\gamma)=\CatFil(\gamma',\mathcal G)\circ\CatFil(\mathcal H,\gamma)=\CatFil(\mathcal H,\gamma)\circ\CatFil(\gamma',\mathcal G):\CatFil(\mathcal H,\mathcal G)\to\CatFil(\mathcal H',\mathcal G'),\]
where hopefully, the reader will recognize easily that these functions are simply forms of the Hom functor for the category $\CatFil$.  We mention them to clarify our notation in what follows.

\subsection{\texorpdfstring{The internal Hom functor of $\CatFil$}{InternalHomCatFil}}
We will be defining the internal Hom functor for $\CatFil$ using, among other things, the mapping that takes an admissible (but not necessarily local) partial function to its germ. That is, if we have filters $\mathcal H$ (of subsets of $W$) and $\mathcal G$ (of subsets of $T$) then we can take the germ of an element of $\Partial(\mathcal H,\mathcal G)$ (See Notation~\ref{N:Partial}, and Section~\ref{S:Germs} where germs are defined), giving an arrow in $\CatFil(\mathcal H,\mathcal G)$. However, formation of germs is useful more generally:
If $G\in\mathcal G$, then recall that we denote the set of admissible partial functions $f:W\to T$, such that there is an $H\in\mathcal H$ such that $f(H)\subseteq G$, by $\Partial(\mathcal H,\mathcal G,G)$. If $f$ is such a function, we can form the germ $f/\mathcal H$ and get an element of the set of germs of elements of $\Partial(\mathcal H,\mathcal G,G)$, which set we can denote by $\Gamma(\Partial(\mathcal H,\mathcal G,G))$ as mentioned in Definition~\ref{D:GAPF}.

\begin{proposition}\label{T:FilInternal} We have
\begin{enumerate}
\item
The subsets $\CatFil(\mathcal H,\mathcal G,G)\subseteq\CatFil(\mathcal H,\mathcal G,T)$, for $G\in\mathcal G$, form a base for a filter $\mathcal G^{\mathcal H}$ of subsets of $\CatFil(\mathcal H,\mathcal G,T)$, and $\core \mathcal G^{\mathcal H}=\CatFil(\mathcal H,\mathcal G)$.
 
\item
$(-_1)^{(-_2)}$ is a functor from
$\CatFil\times\CatFil^{\text{op}}$ to $\CatFil$.
\end{enumerate}
\end{proposition}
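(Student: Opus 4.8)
The plan is to handle the two parts in turn, with part~(1) being a short filter-base-and-core computation and part~(2) carrying the real content. Throughout, write $\mathcal G\in\Fil T$, $\mathcal H\in\Fil W$, and recall from the text preceding the statement that $\CatFil(\mathcal H,\mathcal G,G)=\Gamma(\Partial(\mathcal H,\mathcal G,G))$ and $\CatFil(\mathcal H,\mathcal G,T)=\Gamma(\Partial(\mathcal H,\mathcal G,T))$.

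For part~(1), first observe that each $\CatFil(\mathcal H,\mathcal G,G)$ is a subset of $\CatFil(\mathcal H,\mathcal G,T)$, since $G\subseteq T$ forces $\Partial(\mathcal H,\mathcal G,G)\subseteq\Partial(\mathcal H,\mathcal G,T)$, and $\CatFil(\mathcal H,\mathcal G,T)$ itself arises as $G=T\in\mathcal G$. To see these sets form a filter base I would take $G_1,G_2\in\mathcal G$, set $G=G_1\cap G_2\in\mathcal G$, and check $\CatFil(\mathcal H,\mathcal G,G)\subseteq\CatFil(\mathcal H,\mathcal G,G_1)\cap\CatFil(\mathcal H,\mathcal G,G_2)$: if $\varphi=f/\mathcal H$ with $f(H)\subseteq G$ for some $H\in\mathcal H$, then the same $f$ witnesses membership in both $\CatFil(\mathcal H,\mathcal G,G_i)$. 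For the core, since the core of a filter generated by a base is the intersection of the base, $\core\mathcal G^{\mathcal H}=\bigcap_{G\in\mathcal G}\CatFil(\mathcal H,\mathcal G,G)$. The inclusion $\CatFil(\mathcal H,\mathcal G)\subseteq\core\mathcal G^{\mathcal H}$ is immediate from the definition of locality (a local $f$ satisfies $f(H)\subseteq G$ for suitable $H$, for every $G$). For the reverse inclusion, fix a representative $f$ of $\varphi\in\core\mathcal G^{\mathcal H}$; for each $G$ the hypothesis gives a representative $g_G\equiv_{\mathcal H}f$ with $g_G(H_G)\subseteq G$, and intersecting $H_G$ with $\dd f$ and with the set on which $g_G\equiv_{\mathcal F}f$ (a set in $\mathcal H$) produces $H'_G\in\mathcal H$ with $H'_G\subseteq\dd f$ and $f(H'_G)\subseteq G$; as $G$ was arbitrary, $f$ is local, so $\varphi\in\CatFil(\mathcal H,\mathcal G)$. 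This step uses that locality is a property of the germ, already recorded in the six-part Theorem on $\equiv_{\mathcal F}$.

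For part~(2), the functor sends $(\mathcal G,\mathcal H)\mapsto\mathcal G^{\mathcal H}$ on objects. On a morphism $(\gamma,\gamma')$ of $\CatFil\times\CatFil^{\mathrm{op}}$, with $\gamma\in\CatFil(\mathcal G,\mathcal G')$ represented by $q$ and $\gamma'\in\CatFil(\mathcal H',\mathcal H)$ represented by $r$, I would define a partial function $\Phi$ from $\CatFil(\mathcal H,\mathcal G,T)$ to $\CatFil(\mathcal H',\mathcal G',T')$ by $\varphi\mapsto\gamma\circ\varphi\circ\gamma'$, concretely $\psi/\mathcal H\mapsto (q\circ\psi\circ r)/\mathcal H'$, and set $\gamma^{\gamma'}=\Phi/\mathcal G^{\mathcal H}$. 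The subtlety, and the reason the internal hom is built on the larger set $\CatFil(\mathcal H,\mathcal G,T)$ rather than on $\CatFil(\mathcal H,\mathcal G)$, is that $\psi$ need not be local, so $q\circ\psi\circ r$ need not be admissible wrt $\mathcal H'$: $\Phi$ is genuinely only partially defined. The two core verifications are therefore that $\Phi$ is admissible and local wrt the internal-hom filters. For admissibility I would show $\CatFil(\mathcal H,\mathcal G,\dd q)\subseteq\dd\Phi$: if $\psi(H_0)\subseteq\dd q$ then $H_0\cap\dd\psi\subseteq\psi^{-1}(\dd q)$, so $\psi^{-1}(\dd q)\in\mathcal H$, whence $(\psi\circ r)^{-1}(\dd q)=r^{-1}(\psi^{-1}(\dd q))\in\mathcal H'$ by locality of $r$, making the composite admissible. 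For locality, given $G'\in\mathcal G'$ use locality of $q$ to pick $G\in\mathcal G$ with $G\subseteq\dd q$ and $q(G)\subseteq G'$; then for $\varphi\in\CatFil(\mathcal H,\mathcal G,G)$ with $\psi(H_0)\subseteq G$, locality of $r$ yields $H'\in\mathcal H'$ with $r(H')\subseteq H_0$, and monotonicity of images (Remark~\ref{R:FBtoFB}) together with the composition law for images (Proposition~\ref{T:GaloisParFnSubsets}(2)) give $(q\circ\psi\circ r)(H')\subseteq q(\psi(H_0))\subseteq q(G)\subseteq G'$, so $\Phi(\varphi)\in\CatFil(\mathcal H',\mathcal G',G')$; thus $\CatFil(\mathcal H,\mathcal G,G)\subseteq\Phi^{-1}(\CatFil(\mathcal H',\mathcal G',G'))\in\mathcal G^{\mathcal H}$.

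It then remains to show well-definedness and functoriality. Well-definedness means: changing the representative $\psi$ of $\varphi$ leaves the germ $\Phi(\varphi)$ unchanged, and changing $q,r$ leaves the germ $\Phi/\mathcal G^{\mathcal H}$ unchanged. Both reduce to the fact that $\CatFil$-composition of germs is well-defined (the six-part Theorem, part~(6), on composites respecting $\equiv$), applied on the subdomain $\CatFil(\mathcal H,\mathcal G,\dd q)$ where the relevant composites are admissible and, as just shown, locally well-behaved; I expect this to be the main obstacle, precisely because $\varphi$ sits in the non-local space $\CatFil(\mathcal H,\mathcal G,T)$ and so the germ-composition lemma must be invoked after restricting to where admissibility holds. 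Functoriality is then routine: with $q=1_T$, $r=1_W$ one gets $\Phi=\id$ on $\CatFil(\mathcal H,\mathcal G,T)$, giving $\id_{\mathcal G}^{\,\id_{\mathcal H}}=\id_{\mathcal G^{\mathcal H}}$; and for composites, associativity of composition in $\CatFil$ gives $(\delta\circ\gamma)\circ\varphi\circ(\gamma'\circ\delta')=\delta\circ(\gamma\circ\varphi\circ\gamma')\circ\delta'$ at the level of representatives, so $(\delta\circ\gamma)^{(\gamma'\circ\delta')}=\delta^{\delta'}\circ\gamma^{\gamma'}$ once one checks the two partial functions agree on a set in the appropriate internal-hom filter, which follows from the admissibility estimate above.
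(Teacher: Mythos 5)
Your argument is correct, but it follows a genuinely different route from the paper's on part (2), and at one point it is more careful than the paper. For part (1) you simply supply the details the paper dismisses as ``clear,'' and they are right. For part (2), the paper splits the bifunctor into its two partial functors: left composition with a representative $g$ of $\gamma\in\CatFil(\mathcal G,\mathcal G')$ yields the covariant arrow $\gamma^{\mathcal H}\in\CatFil(\mathcal G^{\mathcal H},{\mathcal G'}^{\mathcal H})$, right composition with a representative $r$ of $\rho\in\CatFil(\mathcal H',\mathcal H)$ yields the contravariant arrow $\mathcal G^{\rho}\in\CatFil(\mathcal G^{\mathcal H},\mathcal G^{\mathcal H'})$, and the functor is then asserted (``Clearly we have produced a functor''), with well-definedness on germs, the interchange of the two one-variable actions, and the functor equations left unverified. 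You instead define the action on a morphism $(\gamma,\gamma')$ in one stroke as the germ of $\varphi\mapsto\gamma\circ\varphi\circ\gamma'$, which makes the interchange automatic and reduces functoriality to a single associativity check, at the cost of having to face well-definedness head on (your appeal to part (6) of the six-part theorem on $\equiv_{\mathcal F}$ indeed only works after the restriction you describe, since that part assumes locality of the inner functions). The more substantive difference is admissibility: the paper claims left composition $f\mapsto g\circ f$ is a \emph{total} map on (germs of) admissible partial functions and deduces admissibility of $\gamma^{\mathcal H}$ from totality alone; that claim is false in general, since $\dd(g\circ f)=f^{-1}(\dd(g))$ need not belong to $\mathcal H$ when $f$ is admissible but not local (e.g.\ $f$ constant with value outside $\dd(g)$, with $\nullset\notin\mathcal H$). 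Totality does hold for right composition, by locality of $r$, which is exactly what your identity $\dd(q\circ\psi\circ r)=r^{-1}(\psi^{-1}(\dd(q)))$ isolates. Your observation that the combined operation is only partially defined, together with the containment $\CatFil(\mathcal H,\mathcal G,\dd(q))\subseteq\dd(\Phi)$, is the correct way to obtain admissibility, so your route actually repairs a gap in the paper's argument; the locality halves of the two proofs are essentially the same (basic sets map into basic sets, using locality of $q$ and of $r$).
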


\begin{proof}
(1): clear.

(2): Given $\mathcal G$ and $\mathcal H$, other filters $\mathcal G'$ and $\mathcal H'$, and germs $\gamma\in\CatFil(\mathcal G,\mathcal G')$, $\rho\in\CatFil(\mathcal H',\mathcal H)$, we must show that $\gamma$ gives rise to an arrow $\gamma^{\mathcal H}\in\CatFil(\mathcal G^{\mathcal H},{\mathcal G'}^{\mathcal H})$, and $\rho$ gives rise to an arrow $\mathcal G^\rho:\CatFil(\mathcal G^{\mathcal H},\mathcal G^{\mathcal H'})$.

Let $g$ be a partial function, admissible with respect to $\mathcal G$ and local with respect to $\mathcal G'$.  (Every $\gamma$ has such a representative by definition of $\CatFil(\mathcal G,\mathcal G')$, so let us say that $g$ is a representative of $\gamma$.) Composition with $g$ on the left is a (total) function from $\Partial(\mathcal H,\mathcal G,T)$ to $\Partial(\mathcal H,\mathcal G',T')$ (where $\mathcal G'$ is a filter of subsets of the set $T'$).  The germ of this total function is admissible (the germ of a total function always is); to show it is local, consider a basic set $\Partial(\mathcal H,\mathcal G',G')$ where $G'\in\mathcal G'$. If $G\in\mathcal G$ is such that $g(G)\subseteq G'$ (such a $G$ exists because $g$ is local with respect to $\mathcal G'$), then composition with $g$ on the left maps the basic set $\Partial(\mathcal H,\mathcal G,G)$ into $\Partial(\mathcal H,\mathcal G',G')$, as needed to show composition with $g$ is a local function, and so, the germ of the composition function is local. Thus the arrow $\gamma^{\mathcal H}:{\mathcal G}^{\mathcal H}\to{\mathcal G'}^{\mathcal H}$.

Now, let $r:\bigcup\mathcal H'\to W$ be a partial function, admissible with respect to $\mathcal H'$, local with respect to $\mathcal H$, and representing $\rho$. Composition with $r$ on the right is once again a total function from $\Partial(\mathcal H,\mathcal G,T)$ to $\Partial(\mathcal H',\mathcal G,T)$, because if $f\in\Partial(\mathcal H,\mathcal G,T)$, then it is admissible, and we have $\dd(f)\in\mathcal H$. Then since $r$ is local, we have $\dd(f\circ r)\in\mathcal H$, so that $f\circ r\in\Partial(\mathcal H',\mathcal G,T)$ -- i.e., it is an admissible partial function and we conclude that composition with the germ, $\rho$, is total and admissible. For locality, suppose now that we have $G\in\mathcal G$; we want to show that there is a basic set in the filter $\mathcal G^{\mathcal H}$ that will map into $\Partial(\mathcal H',\mathcal G,G)$, and we will show that $\Partial(\mathcal H,\mathcal G,G)$ will serve. Given $f\in\Partial(\mathcal H,\mathcal G,G)$, we have $f^{-1}(G)\in\mathcal H$. If we form $f\circ r$, then since $r$ is local, we see that 
$(f\circ r)^{-1}(G)=r^{-1}(\dd(r)\cap f^{-1}(G)))\in\mathcal H'$, so $f\circ r\in\Partial(\mathcal H',\mathcal G,G)$.
Since we have now shown that composition with $r$ on the right is admissible and local, so is its germ; thus, the arrow ${\mathcal G}^\rho:{\mathcal G}^{\mathcal H}\to{\mathcal G}^{\mathcal H'}$.

Clearly we have produced a functor
$(-_1)^{(-_2)}:\CatFil\times\CatFil^{\text{op}}\to\CatFil$.

\end{proof}

For this next definition, we make use of Notation~\ref{N:Fh}.

\begin{definition}
	Let $\mathcal H$ be a filter. For every pair of filters $\pair{\mathcal F}{\mathcal G}$, let \[\chi^{\mathcal H}_{\mathcal F,\mathcal G}:\CatFil(\mathcal F\opsquare\mathcal H,\mathcal G)\to\CatFil(\mathcal F,{\mathcal G}^{\mathcal H})\] be the total function mapping a germ $\kappa$ to 
	\begin{align*}\chi^{\mathcal H}_{\mathcal F,\mathcal G}(\kappa)&=[s\in F_{\mathcal F,\mathcal G,\dd(q)}\mapsto[w\in h_{\mathcal F,\mathcal G,\dd(q)}(s)\mapsto q(s,w)]/\mathcal H]/\mathcal F\in\CatFil(\mathcal F,{\mathcal G}^{\mathcal H})
	\end{align*}
	where $q\in\LPartial(\mathcal F\opsquare\mathcal H,\mathcal G)$ is a representative of the arrow (admissible, local germ) $\kappa$.
\end{definition}

Some propositions about this mapping, and Notation~\ref{N:Fh}, that we will need later:

\begin{lemma}\label{T:FhLemma} Let $\mathcal F$,  $\bar{\mathcal F}$, $\mathcal G$, $\tilde{\mathcal G}$, $\mathcal H$ be filters of subsets of sets $S$, $\bar S$, $T$. $\tilde T$, and $W$ respectively, and let $q\in\LPartial(\mathcal F\opsquare\mathcal H,\mathcal G)$, $\bar q\in\LPartial(\mathcal F,\bar{\mathcal F})$, $\tilde q\in\LPartial(\mathcal G,\tilde{\mathcal G})$, and $\hat q=q\circ(\bar q\opsquare_p\mathcal H)$.
	We have
	\begin{enumerate}
		\item\label{T:Fh1} $F_{\bar{\mathcal F},\mathcal H,\dd(\hat q)}=\bar q^{-1}(F_{\mathcal F,\mathcal H,\dd(q)})$;
		\item\label{T:Fh2} if $s\in F_{\bar{\mathcal F},\mathcal H,\dd(\hat q)}$, then $h_{\bar{\mathcal F},\mathcal H,\dd(\hat q)}(s)=h_{\mathcal F,\mathcal H,\dd(q)}(\bar q(s))$;
		\item\label{T:Fh3} $F_{\mathcal F,\mathcal H,\dd(\tilde q\circ q)}\subseteq F_{\mathcal F,\mathcal H,\dd(q)}$; and
		\item\label{T:Fh4} if $s\in F_{\mathcal F,\mathcal H,\dd(\tilde q\circ q)}$, then $h_{\mathcal F,\mathcal H,\dd(\tilde q\circ q)}(s)\subseteq h_{\mathcal F,\mathcal H,\dd(q)}(s)$.
	\end{enumerate}
\end{lemma}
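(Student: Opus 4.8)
My plan is to treat all four assertions uniformly as \emph{slice-wise} statements about the data attached by Notation~\ref{N:Fh} to a domain of definition $X$ lying in a box-product filter: $F_{\mathcal F,\mathcal H,X}$ records exactly those $s$ whose slice $\{\,w\mid\pair sw\in X\,\}$ belongs to $\mathcal H$, and $h_{\mathcal F,\mathcal H,X}(s)$ \emph{is} that slice. So everything reduces to computing the two relevant domains of definition and reading off their slices. Concretely, I would first compute $\dd(\hat q)$ (for parts~\ref{T:Fh1}--\ref{T:Fh2}) and $\dd(\tilde q\circ q)$ (for parts~\ref{T:Fh3}--\ref{T:Fh4}) via Lemma~\ref{T:DDLemma}. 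That the composites $\hat q$ and $\tilde q\circ q$ are admissible, so that $F$ and $h$ are even defined for them, follows from Proposition~\ref{T:AdmissibleLocal}.

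For the pre-composition parts, Lemma~\ref{T:DDLemma} gives $\dd(\hat q)=(\bar q\opsquare_p 1_{\mathcal H})^{-1}(\dd(q))$. Unfolding the definition of $\bar q\opsquare_p 1_{\mathcal H}$ (domain $\dd(\bar q)\times W$, acting by $\pair sw\mapsto\pair{\bar q(s)}w$) together with the definition of preimage under a partial function, one gets that $\pair sw\in\dd(\hat q)$ iff $s\in\dd(\bar q)$ and $\pair{\bar q(s)}w\in\dd(q)$. Hence for $s\in\dd(\bar q)$ the slice of $\dd(\hat q)$ at $s$ equals the slice of $\dd(q)$ at $\bar q(s)$. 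This identity of slices is exactly (\ref{T:Fh2}), and, applying the criterion ``slice lies in $\mathcal H$,'' it gives (\ref{T:Fh1}): $s$ lies in $F_{\bar{\mathcal F},\mathcal H,\dd(\hat q)}$ iff $s\in\dd(\bar q)$ and $\bar q(s)\in F_{\mathcal F,\mathcal H,\dd(q)}$, i.e. iff $s\in\bar q^{-1}(F_{\mathcal F,\mathcal H,\dd(q)})$. (Here I read $\bar q$ in the direction that makes $\hat q=q\circ(\bar q\opsquare_p 1_{\mathcal H})$ well defined.)

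For the post-composition parts, Lemma~\ref{T:DDLemma} gives $\dd(\tilde q\circ q)=q^{-1}(\dd(\tilde q))$, and since a preimage under $q$ is always contained in $\dd(q)$, we get $\dd(\tilde q\circ q)\subseteq\dd(q)$. Consequently every slice of $\dd(\tilde q\circ q)$ is contained in the corresponding slice of $\dd(q)$. Inclusion (\ref{T:Fh4}) is then immediate from the definition of $h$, and for (\ref{T:Fh3}) one argues that if the smaller slice belongs to $\mathcal H$ then so does the larger, by upward closure of the filter $\mathcal H$; this yields $F_{\mathcal F,\mathcal H,\dd(\tilde q\circ q)}\subseteq F_{\mathcal F,\mathcal H,\dd(q)}$. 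These two parts carry no caveats.

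The one delicate point, and the step I expect to need the most care, is the boundary case $\nullset\in\mathcal H$ in the pre-composition statements (\ref{T:Fh1})--(\ref{T:Fh2}). In that case $\mathcal H=\pow(W)$, so even the empty slice belongs to $\mathcal H$; a point $s\notin\dd(\bar q)$ then has empty slice of $\dd(\hat q)$ and would spuriously enter $F_{\bar{\mathcal F},\mathcal H,\dd(\hat q)}$, while being excluded from $\bar q^{-1}(\cdots)$ (whose elements must lie in $\dd(\bar q)$), and $\bar q(s)$ in (\ref{T:Fh2}) would be undefined. I plan to dispose of this either by assuming $\mathcal H$ proper, or—since $\dd(\bar q)\in\bar{\mathcal F}$—by noting that the two sides agree on $\dd(\bar q)$, which is all the subsequent naturality arguments for $\chi^{\mathcal H}$ actually use.
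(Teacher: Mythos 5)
Your proof is correct and follows essentially the same route as the paper's: both compute $\dd(\hat q)$ and $\dd(\tilde q\circ q)$ via Lemma~\ref{T:DDLemma}, read off the slices of these domains to get parts (\ref{T:Fh2}) and (\ref{T:Fh4}), and then obtain (\ref{T:Fh1}) and (\ref{T:Fh3}) from the membership criterion ``slice lies in $\mathcal H$'' together with upward closure of $\mathcal H$. Your caveat about the boundary case $\nullset\in\mathcal H$ is well taken and is in fact a point the paper's own computation passes over silently (its step restricting from $s\in\bar S$ to $s\in\dd(\bar q)$ uses that an empty slice is not in $\mathcal H$), so recording that the two sides of (\ref{T:Fh1}) agree on $\dd(\bar q)\in\bar{\mathcal F}$ --- which is all the naturality argument for $\chi^{\mathcal H}$ needs, since only the germ matters --- is the honest version of the statement.
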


\begin{proof}
	(1):
	\begin{align*} 	
		F_{\bar{\mathcal F},\mathcal H,\dd(\hat q)}
		&=\{\,s\in\bar S\mid\{\,w\in W\mid\pair sw\in \dd(\hat q)\,\}\in\mathcal H\,\}\\
		&=\{\,s\in\bar S\mid\{\,w\in W\mid\pair sw\in(\dd(\bar q)\times W)\cap (\bar q\mathrel{\opsquare_p}\mathcal H)^{-1}(\dd(q)) \,\}\in\mathcal H\,\}\\
		&=\{\,s\in\dd(\bar q)\mid\{\,w\in W\mid\pair sw\in(\bar q\mathrel{\opsquare_p}\mathcal H)^{-1}(\dd(q)) \,\}\in\mathcal H\,\}\\
		&=\{\,s\in\dd(\bar q)\mid\{\,w\in W\mid\pair{\bar q(s)}w\in \dd(q)\,\}\in\mathcal H\,\}\\
		&=\bar q^{-1}\left(\{\,s\in S\mid\{\,w\in W\mid\pair sw\in \dd(q)\,\}\in\mathcal H\,\}\right)\\
		&=\bar q^{-1}(F_{\mathcal F,\mathcal H,\dd(q)}).
	\end{align*}
	
	(2):
	\begin{align*}
		h_{\bar{\mathcal F},\mathcal H,\dd(\hat q)}(s)
		&=\{\,w\in W\mid\pair sw\in \dd(\hat q)\,\}\\
		&=\{\,w\in W\mid\pair{\bar q(s)}w\in \dd(q)\,\}\\
		&=h_{\mathcal F,\mathcal H,\dd(q)}(\bar q(s)).
	\end{align*}
	
	(3):
	\begin{align*}
		F_{\mathcal F,\mathcal H,\dd(\tilde q\circ q)}&=\{\,s\in S\mid\{\,w\in W\mid\pair sw\in\dd(\tilde q\circ q)\,\}\in\mathcal H\,\}\\
		&=\{\,s\in S\mid\{\,w\in W\mid\pair sw\in\dd(q)\cap q^{-1}(\dd\tilde q)\,\}\in\mathcal H\,\}\\
		&\subseteq\{\,s\in S\mid\{\,w\in W\mid\pair sw\in\dd(q)\,\}\in\mathcal H\,\}\\
		&=F_{\mathcal F,\mathcal H,\dd(q)}.
	\end{align*}
	
	(4):
	\begin{align*}
		h_{\mathcal F,\mathcal H,\dd(\tilde q\circ q)}(s)&=\{\,w\in W\mid\pair sw\in\dd(\tilde q\circ q)\,\}\\
		&=\{\,w\in W\mid\pair sw\in\dd(q)\cap q^{-1}(\dd\tilde q)\,\}\\
		&\subseteq\{\,w\in W\mid\pair sw\in\dd(q)\,\}\\
		&=h_{\mathcal F,\mathcal H,\dd(q)}(s).
	\end{align*}
\end{proof}

\begin{remark}
	Note that just as in our discussion of the natural transformation $\alpha$, there is not much mystery about where the partial functions we define send the elements in their domains.
\end{remark}

\begin{theorem} We have
	\begin{enumerate}
		\item $\chi^{\mathcal H}_{\mathcal F,\mathcal G}$ is a well-defined, one-one, and onto function;
		\item $\chi^{\mathcal H}$ is a
		natural isomorphism from the functor $\CatFil(-\opsquare\mathcal H,-):\CatFil^{\rm{op}}\times\CatFil\to\Set$ to the functor $\CatFil(-,-^{\mathcal H}):\CatFil^{\rm{op}}\times\CatFil\to\Set$, resulting in an adjunction
		\[\langle-\opsquare\mathcal
		H,-^{\mathcal H},\chi^{\mathcal
			H}\rangle:\CatFil\rightharpoonup\CatFil;\]
\item we have  a nonsymmetric closed structure
\[\langle\CatFil,\square,u,\alpha,\lambda,\varrho,\{\,\chi^{\mathcal H}\,\}_{\mathcal H\in\CatFil}\rangle\]
on the category $\CatFil$, where $u$, $\alpha=\alpha^g$, $\lambda=\lambda^g$, and $\varrho=\varrho^g$ are defined as in Section~\ref{S:FilMonoidal}.
	\end{enumerate}
\end{theorem}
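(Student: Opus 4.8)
The plan is to realize $\chi^{\mathcal H}_{\mathcal F,\mathcal G}$ as the germ-level shadow of the currying bijection $\Set(S\times W,T)\cong\Set(S,T^{W})$ furnished by cartesian closedness of $\Set$, and to prove (1) by constructing an explicit two-sided inverse. Given $\rho\in\CatFil(\mathcal F,{\mathcal G}^{\mathcal H})$ represented by an admissible, local partial function $r$ whose value $r(s)$ at $s\in\dd(r)$ is itself a germ in $\CatFil(\mathcal H,\mathcal G,T)$, I would choose for each such $s$ a representing partial function $g_s$ of the germ $r(s)$ and set
\[
\bar\chi^{\mathcal H}_{\mathcal F,\mathcal G}(\rho)=[\,\pair sw\mapsto g_s(w)\,]/(\mathcal F\opsquare\mathcal H)\in\CatFil(\mathcal F\opsquare\mathcal H,\mathcal G).
\]
The first task is well-definedness of both maps on germs. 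For $\chi^{\mathcal H}_{\mathcal F,\mathcal G}$, if $q\equiv_{\mathcal F\opsquare\mathcal H}q'$ then, using the description of $\mathcal F\opsquare\mathcal H$ in the Theorem of Section~\ref{S:FilOpSquare} together with Theorem~\ref{T:AnyRep} and the germ-equivalence facts of Section~\ref{S:Germs}, there is an $F\in\mathcal F$ so that for each $s\in F$ the slices $h_{\mathcal F,\mathcal H,\dd(q)}(s)$ and $h_{\mathcal F,\mathcal H,\dd(q')}(s)$ agree down to a set in $\mathcal H$ on which $q(s,-)=q'(s,-)$; hence the inner germs $[\,w\mapsto q(s,w)\,]/\mathcal H$ coincide and so do the outer germs taken modulo $\mathcal F$. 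The symmetric argument shows $\bar\chi^{\mathcal H}_{\mathcal F,\mathcal G}$ is independent of the chosen $g_s$ and of $r$. Checking $\chi\circ\bar\chi=\id$ and $\bar\chi\circ\chi=\id$ then reduces, on representatives, to the $\Set$ currying identities, since the underlying set-maps $\pair sw\mapsto q(s,w)$ and $s\mapsto[w\mapsto q(s,w)]$ are exact inverses wherever everything is defined.

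Before the bijection is meaningful I must verify that the outputs land in the asserted hom-sets. For admissibility of $\chi^{\mathcal H}_{\mathcal F,\mathcal G}(\kappa)$: its domain of definition is $F_{\mathcal F,\mathcal H,\dd(q)}$, and since $q$ is admissible we have $\dd(q)\in\mathcal F\opsquare\mathcal H$, so $F_{\mathcal F,\mathcal H,\dd(q)}\in\mathcal F$ by part~(2) of the Theorem of Section~\ref{S:FilOpSquare}. For locality with respect to $\mathcal G^{\mathcal H}$: given a basic set $\CatFil(\mathcal H,\mathcal G,G)$ with $G\in\mathcal G$, locality of $q$ yields $F\opsquare h\subseteq\dd(q)$ with $q(F\opsquare h)\subseteq G$; then for $s\in F$ the germ $[\,w\mapsto q(s,w)\,]/\mathcal H$ sends $h(s)\in\mathcal H$ into $G$, so it lies in $\CatFil(\mathcal H,\mathcal G,G)$, exactly as needed. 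The dual verifications for $\bar\chi^{\mathcal H}_{\mathcal F,\mathcal G}(\rho)$ run the same way, using that each $r(s)$ is local and that a base element of $\mathcal F\opsquare\mathcal H$ can be assembled from the local witnesses $h(s)\subseteq\dd(r(s))$.

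For part (2), naturality in $\mathcal F$ and in $\mathcal G$ is precisely what Lemma~\ref{T:FhLemma} is built for: items \eqref{T:Fh1} and \eqref{T:Fh2}, applied with $\hat q=q\circ(\bar q\opsquare_p\mathcal H)$, give the contravariant square $\chi^{\mathcal H}_{\bar{\mathcal F},\mathcal G}(\kappa\circ(\bar q/\mathcal F\,\opsquare\,\mathcal H))=\chi^{\mathcal H}_{\mathcal F,\mathcal G}(\kappa)\circ(\bar q/\mathcal F)$ by matching domains and then matching values (both send the relevant $s$ to $[\,w\mapsto q(\bar q(s),w)\,]/\mathcal H$); items \eqref{T:Fh3} and \eqref{T:Fh4} give the covariant square involving $\tilde q$ and the induced arrow $\tilde q^{\mathcal H}$ from Proposition~\ref{T:FilInternal}. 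A natural isomorphism between the two Hom functors is exactly an adjunction $\langle-\opsquare\mathcal H,-^{\mathcal H},\chi^{\mathcal H}\rangle:\CatFil\rightharpoonup\CatFil$. Finally, part (3) is a formal consequence: the coherent monoidal structure $\langle\CatFil,\opsquare,u,\alpha^g,\lambda^g,\varrho^g\rangle$ was established in Section~\ref{S:FilMonoidal}, and together with the family of adjunctions $-\opsquare\mathcal H\dashv-^{\mathcal H}$, natural in $\mathcal H$, this is the data required by the definition of a nonsymmetric closed category in \cite[Definition~\zref{II-D:Monoidal}]{reltclosed}, whose remaining axioms relating $\chi^{\mathcal H}$ to $\alpha$, $\lambda$, $\varrho$ are verified by unwinding the definitions on representatives.

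I expect the main obstacle to be the well-definedness and admissibility/locality bookkeeping in the first two paragraphs: because $\chi^{\mathcal H}_{\mathcal F,\mathcal G}$ forms germs twice — once in $\mathcal H$ to land in $\CatFil(\mathcal H,\mathcal G,T)$ and once in $\mathcal F$ — one must thread the filter conditions through both the slice sets $h_{\mathcal F,\mathcal H,\dd(q)}(s)$ and the outer equivalence, and confirm that the choices $g_s$ do not leak into the final germ. The bijection itself is "morally" $\Set$ currying, but making that precise at the level of germs-of-germs, rather than partial functions, is where the real care is needed.
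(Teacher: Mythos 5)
Your proposal is correct and follows essentially the same route as the paper's proof: the inverse map $\bar\chi^{\mathcal H}_{\mathcal F,\mathcal G}$ built from chosen representatives $g_s$ of the germs $r(s)$, the well-definedness and admissibility/locality bookkeeping, the use of Lemma~\ref{T:FhLemma}(\ref{T:Fh1})--(\ref{T:Fh4}) for naturality in $\mathcal F$ and $\mathcal G$, and the formal derivation of the closed structure from the monoidal data of Section~\ref{S:FilMonoidal} all match the paper. The only cosmetic difference is that you obtain injectivity from the two-sided inverse identity $\bar\chi\circ\chi=\id$, whereas the paper argues it directly (distinct germs $q\not\equiv q'$ yield distinct values of $\chi^{\mathcal H}_{\mathcal F,\mathcal G}$); both are sound.
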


\begin{proof}
	(1):  If $q:F\opsquare h\to T$ and $q':F'\opsquare h'$ are representatives of $\kappa$, then $q$ and $q'$ agree on $\hat F\opsquare \hat h$, for some $\hat F\in\mathcal F$ and $\hat h:\hat F\to\mathcal H$ such that $\hat F\opsquare\hat h\subseteq F\opsquare h\cap F'\opsquare h'$. Then for every $s\in\hat F$, $h(s)\cap h'(s)\subseteq \hat h(s)$, and we have
	\begin{align*}[w\in h(s)\mapsto q(s,w)]/\mathcal H&=[w\in \hat h(s)\mapsto q(s,w)]/\mathcal H\\
	&=[w\in h'(s)\mapsto q'(s,w)]/\mathcal H,\end{align*}
	proving that $\chi^{\mathcal H}_{\mathcal F,\mathcal G}(\kappa)$ does not depend on the choice of $q$.
	
	On the other hand, if we have the same $q$ and $q'$, except that $q\not\equiv q'$, then for any $\hat F\opsquare\hat h\in\mathcal F\opsquare\mathcal H$ with $\hat F\subseteq F\cap F'$ and $\hat h:\hat F\to\mathcal H$ with $\hat h(s)\subseteq h(s)\cap h'(s)$ for $s\in\hat F$, there is an $\hat s\in \hat F$ such that $[w\in\hat h(s)\mapsto q(s,w)]\not\equiv[w\in \hat h(s)\mapsto q(s,w)]$, which means that there is some $\pair{\hat s}{\hat w}\in\hat F\opsquare\hat h$ such that $q(\hat s,\hat w)\neq q'(\hat s,\hat w)$, and it follows that $[w\in\hat h(\hat s)\mapsto q(\hat s,w)]\not\equiv[w\in\hat h(\hat s)\mapsto q'(\hat s,w)]$ and $\chi^{\mathcal H}_{\mathcal F,\mathcal G}(q/\mathcal F)\neq\chi^{\mathcal H}_{\mathcal F,\mathcal G}(q'/\mathcal F)$. Thus, $\chi^{\mathcal H}_{\mathcal F,\mathcal G}$ is one-one.
	
	$\chi^{\mathcal H}_{\mathcal F,\mathcal G}(\kappa)$ is admissible  because it is the germ of a partial function with domain $F$. It is local, because if $X\in{\mathcal G}^{\mathcal H}$, there is a $G\in\mathcal G$ such that $\Gamma(\Partial(\mathcal H,\mathcal G,G))\subseteq X$, and we will have
	\[\chi^{\mathcal H}_{\mathcal F,\mathcal G}(q|_{F\times G})\in X;\]
	thus, $\chi^{\mathcal X}_{\mathcal F,\mathcal G}:\CatFil(\mathcal F\opsquare\mathcal H,\mathcal G\to\CatFil(\mathcal F,{\mathcal G}^{\mathcal H})$.
	
	To show $\chi^{\mathcal H}_{\mathcal F,\mathcal G}$ is onto, let us be given $\rho\in\CatFil(\mathcal F,{\mathcal G}^{\mathcal H})$, and an admissible, local partial function $r:\mathcal F\to{\mathcal G}^{\mathcal H}$ representing $\rho$, and define
	\[\bar\chi^{\mathcal H}_{\mathcal F,\mathcal G}(\rho)=[\pair sw\in\dd(r)\opsquare[s\mapsto\dd(y(s))]\mapsto y(s)(w)]/(\mathcal F\opsquare\mathcal H)\]
	where for every $s\in\dd(r)$, $y(s)$ is some representative of the germ of admissible partial functions $r(s)\in{\mathcal G}^{\mathcal H}$. Since we made choices here, this is a one-to-many relation.  Ignoring for the moment that this is not a function, and just fixing our choices in defining $\bar\chi^{\mathcal H}_{\mathcal F,\mathcal G}(\rho)$, we see that
	\begin{align*}
		\chi^{\mathcal H}_{\mathcal F,\mathcal G}\left(\bar\chi^{\mathcal H}_{\mathcal F,\mathcal F}(\rho)\right)
	&=\chi^{\mathcal H}_{\mathcal F,\mathcal G}\left(\left[\pair sw\in\dd(r)\opsquare\left[s\mapsto\dd(y(s))\right]\mapsto y(s)(w)\right]/(\mathcal F\opsquare\mathcal H)\right)\\
	&=\left[s\in\dd(r)\mapsto\left[w\in \dd(y(s)))\mapsto y(s)(w)\right]/\mathcal H\right]/\mathcal F\\
	&=\left[s\in\dd(r)\mapsto y(s)/\mathcal H\right]/\mathcal F\\
	&=\left[s\in\dd(r)\mapsto r(s)\right]/\mathcal F\\
	&=r/\mathcal F=\rho,\\
	\end{align*}
showing that $\chi^{\mathcal H}_{\mathcal F,\mathcal G}$ is onto.
	
(2): In order to show that $\chi^{\mathcal H}_{\mathcal F,\mathcal G}$ is natural in $\mathcal F$, it suffices to show that if in addition to having $\kappa$ as we have assumed, we have $\bar\kappa\in\CatFil(\bar{\mathcal F},\mathcal F)$, where $\bar{\mathcal F}$ is a filter of subsets of a set $\bar S$, then
\begin{equation}\mylabel{E:chi1}
\chi^{\mathcal H}_{\bar{\mathcal F},\mathcal G}(\kappa\circ (\bar
\kappa\opsquare\mathcal H))
=
(\chi^{\mathcal H}_{\mathcal F,\mathcal G}(\kappa))\circ \bar \kappa :\bar{\mathcal F}\to\mathcal G^{\mathcal H};
\end{equation}
indeed, if we set $\hat q=q\circ(\bar q\opsquare_p\mathcal H):\bar{\mathcal F}\opsquare\mathcal H\to\mathcal G$, we have
\begin{align*}
\chi^{\mathcal H}_{\bar{\mathcal F},\mathcal G}(\kappa\circ (\bar
\kappa\opsquare\mathcal H))
&=\chi^{\mathcal H}_{\bar{\mathcal F},\mathcal G}\left(q/(\mathcal F\opsquare\mathcal H)\circ(\bar q\opsquare_p\mathcal H)/(\bar{\mathcal F}\opsquare\mathcal H)\right)\\
&=\chi^{\mathcal H}_{\bar{\mathcal F},\mathcal G}\left(\hat q/\bar{\mathcal F}\opsquare\mathcal H\right)\\
&=[s\in F_{\bar{\mathcal F},\mathcal H,\dd(\hat q)}\mapsto[w\in h_{\bar{\mathcal F},\mathcal H,\dd(\hat q)}(s)\mapsto \hat q(s,w)]/\mathcal H]/\bar{\mathcal F}\\
&=[s\in F_{\bar{\mathcal F},\mathcal H,\dd(\hat q)}\mapsto[w\in h_{\bar{\mathcal F},\mathcal H,\dd(\hat q)}(s)\mapsto q(\bar q(s),w)]/\mathcal H]/\bar{\mathcal F}\\
&=[s\in F_{\bar{\mathcal F},\mathcal H,\dd(\hat q)}\mapsto[w\in h_{\mathcal F,\mathcal H,\dd(q)}(\bar q(s))\mapsto q(\bar q(s),w)]/\mathcal H]/\bar{\mathcal F}\\
&=[s\in\bar q^{-1}(F_{\mathcal F,\mathcal H,\dd(q)})\mapsto[w\in h_{\mathcal F,\mathcal H,\dd(q)}(\bar q(s))\mapsto q(\bar q(s),w)]/\mathcal H]/\bar{\mathcal F}]\\
&=\left([s\in F_{\mathcal F,\mathcal H,\dd(q)}\mapsto[w\in h_{\mathcal F,\mathcal H,\dd(q)}(s)\mapsto q(s,w)]/\mathcal H]\circ \bar q\right)/\bar{\mathcal F}\\
&=\left([s\in F_{\mathcal F,\mathcal H,\dd(q)}\mapsto[w\in h_{\mathcal F,\mathcal H,\dd(q)}(s)\mapsto q(s,w)]/\mathcal H]/\mathcal F\right)\circ (\bar q/\bar{\mathcal F})\\
&=(\chi^{\mathcal H}_{\mathcal F,\mathcal G}(\kappa))\circ \bar \kappa
\end{align*}
where we use Lemma~\ref{T:FhLemma}(\ref{T:Fh1}) and~(\ref{T:Fh2}).

To show that $\chi^{\mathcal H}_{\mathcal F,\mathcal G}$ is natural in $\mathcal G$, we must show that
\begin{equation}\mylabel{E:chi2}
\chi^{\mathcal H}_{\mathcal
	F,\tilde{\mathcal G}}(\tilde\kappa\circ \kappa)
	={\tilde\kappa}^{\mathcal H}\circ(\chi^{\mathcal H}_{\mathcal
	F,\mathcal G}(\kappa)):\mathcal F\to{\tilde{\mathcal G}}^{\mathcal H},
\end{equation}
for any $\tilde q/\mathcal G=\tilde \kappa:\mathcal G\to\tilde{\mathcal G}$, where $\tilde\kappa^{\mathcal H}$ is the usual shorthand for $\tilde \kappa^{1_{\mathcal H}}$. Considering the two sides of Equation~\ref{E:chi2}, we have

\begin{align*}
	\chi^{\mathcal H}_{\mathcal
		F,\tilde{\mathcal G}}(\hat\kappa\circ \kappa)
	&=\chi^{\mathcal H}_{\mathcal F,\mathcal G}(\tilde q/\mathcal G\circ q/(\mathcal F\opsquare\mathcal H))\\
	&=\chi^{\mathcal H}_{\mathcal F,\mathcal G}((\tilde q\circ q)/(\mathcal F\opsquare\mathcal H))\\
	&=[s\in F_{\mathcal F,\mathcal H,\dd(\tilde q\circ q)}\mapsto[w\in h_{\mathcal F,\mathcal H,\dd(\tilde q\circ q)}(s)\mapsto \tilde q(q(s,w))]/\mathcal H]/\mathcal F\\
	&=[s\in F_{\mathcal F,\mathcal H,\dd(\tilde q\circ q)}\mapsto[w\in h_{\mathcal F,\mathcal H,\dd(q)}(s)\mapsto \tilde q(q(s,w))]/\mathcal H]/\mathcal F\\
	&=[s\in F_{\mathcal F,\mathcal H,\dd(q)}\mapsto[w\in h_{\mathcal F,\mathcal H,\dd(q)}(s)\mapsto\tilde q(q(s,w))]/\mathcal H]/\mathcal F\\
	&=\left(\tilde q/\mathcal G\right)^{\mathcal H}\circ[s\in F_{\mathcal F,\mathcal H,\dd(q)}\mapsto[w\in h_{\mathcal F,\mathcal H,\dd(q)}(s)\mapsto q(s,w)]/\mathcal H]/\mathcal F\\
	&=\left(\tilde q/\mathcal G\right)^{\mathcal H}\circ\left(\chi^{\mathcal H}_{\mathcal
		F,\mathcal G}(q/(\mathcal F\opsquare\mathcal H))\right)\\
	&=\tilde\kappa^{\mathcal H}\circ(\chi^{\mathcal H}_{\mathcal
	F,\mathcal G}(\kappa)),
\end{align*}
using Lemma~\ref{T:FhLemma}(\ref{T:Fh3}) and~(\ref{T:Fh4}), which force germs with respect to $\mathcal F$ and $\mathcal H$ to be the same.

\end{proof}

\begin{remark}
	Note that each component $\eta^{\mathcal
		H}_{\mathcal F}:\mathcal F\to(\mathcal F\opsquare{\mathcal
		H})^{\mathcal H}$ of the unit natural
	transformation $\eta^{\mathcal H}$ is the
	germ of the function sending
	$s\in S$ to the germ of the function sending $w\in
	W$ to
	$\pair sw$. Each component $\varepsilon^{\mathcal
		H}_{\mathcal G}:{\mathcal G}^{\mathcal H}\opsquare\mathcal
	H\to\mathcal G$ of the counit natural
	transformation
	$\varepsilon^{\mathcal H}$ is the germ of the
	function sending $\pair{q/(\mathcal G^{\mathcal H}\opsquare\mathcal H)}w$ to $q(w)$ for $w\in\dd(q)$.
\end{remark}

\section{Applications}\mylabel{S:Applications}

\subsection{Uniform spaces on filters}
In \cite{reltunifonfil}, we discuss in detail the theory of uniform spaces with an underlying \emph{filter}, instead of an underlyiing \emph{set}. It should not come as a shock that there is a close relationship between $\CatFil$ and uniform spaces, when we consider that a uniformity on a set is defined as a \emph{filter} of entourages.    $\CatFil$ has a factorization system (Section~\ref{S:Factorization}), and we show in \cite{reltgeneq} that a certain list of properties of a category with factorization system support a theory of generalized equivalences, of which uniformities on a set are an example. These are the properties of $\CatFil$ (and its factorization system) that we proved in Section~\ref{S:CatFilProps}.

Thus, we consider in \cite{reltunifonfil} the category of uniform spaces on filters, which we continue to denote by $\Unif$ because it is so natural to do so.
A uniformity on a filter is a small generalization from a uniformity on a set, but one which manifests interesting new phenomena, especially when completion is considered. If we have a uniformity on a filter, the points of the filter that are not in the core play an interesting role.  They are not closed points, and can never be limits of a cauchy filter, but, there can be cauchy filters consisting entirely of non-core points.  These cauchy filters give rise to new elements when we apply the functor of (hausdorff) completion, $C:\Unif\to\Unif$.

This becomes especially important when we try to define a function space and make the category of uniform spaces on filters into a closed category.  Indeed this is possible, if we admit the possibility of a \emph{nonsymmetric} closed category.  The elements of the function space are germs of admissible partial functions, without regard to being local or uniformly continuous, which properties hold only for germs in the core of the function space object\footnote{And, the core of the function space coincides with the hom-set in the category $\Unif$. The core functor plays the role familiar from the theory of closed categories, symmetric or not, of a functor that takes function space objects and gives the corresponding hom-sets.}.  However, these non-core germs can give rise to new arrows when we complete the hom-objects and, using the fact{\rm\cite[Section~1]{reltclosed}} that completion is a \emph{monoidal functor}, form the category $\check C(\Unif)$, as defined in {\rm \cite[Section~3]{reltclosed}}. The new arrows include an inverse to the unit natural arrow from any uniform space into its completion, so that it becomes possible to work with the assumption that all spaces are complete.

\subsection{\texorpdfstring{$\Ind[\CatFil]$, a base category for Topological Algebra}{Ind(CatFil)}}
In \cite{reltgeneq}, we introduce the theory of $\Ind[\CatFil]$, a cartesian-closed category suitable for the study of Topological Algebra. In particular, for varieties of algebras that are congruence-modular, $\Ind[\CatFil]$ has properties that allow us to generalize Day's Theorem\cite{day}, something that is not possible\cite{weber} in $\Unif$.  Any hausdorff, compactly-generated topological algebra can be made into an object of $\Ind[\CatFil]$, and a factorization system (Section~\ref{S:Factorization}) in that category allows us to associate with that object, a structure lattice analogous to the congruence lattice or lattice of compatible uniformities on that algebra, which will be modular if the algebra belongs to a congruence-modular variety.

\printbibliography

\end{document}